\documentclass{amsart}
\usepackage[utf8]{inputenc}
\usepackage{amsfonts}
\usepackage{amsmath}
\usepackage{amssymb}
\usepackage{amsthm}
\usepackage{xcolor}
\usepackage{bm}
\usepackage{mathtools}
\usepackage{enumitem}
\usepackage{verbatim}
\usepackage{pinlabel}
\usepackage{caption}
\usepackage{subcaption}
\usepackage{tikz}
\usepackage{graphicx}
\usepackage{hyperref}
\usepackage{tikz-cd}
\usepackage{biblatex}
\usepackage{float}
\usepackage{geometry}
\geometry{margin=1.25in}

\addbibresource{mainbib.bib}

\newtheorem{theorem}{Theorem}[section]

\newtheorem{lemma}[theorem]{Lemma}
\newtheorem{obsthm}[theorem]{Observation/Theorem}

\newtheorem{proposition}[theorem]{Proposition}

\newtheorem{corollary}[theorem]{Corollary}

\newtheoremstyle{claim}% name
  {\topsep}% space above
  {\topsep}% space below
  {}% body font
  {}% indent amount
  {\itshape}% theorem head font
  {.}% punctuation after theorem head
  {.5em}% space after theorem head
  {\thmname{#1}\thmnumber{ #2}\thmnote{ (#3)}}% theorem head spec
\theoremstyle{claim}

\DeclareMathOperator{\Homeo}{Homeo}
\DeclareMathOperator{\Diffeo}{Diff}

\DeclareMathOperator{\Aut}{Aut}
\DeclareMathOperator{\link}{link}
\DeclareMathOperator{\diam}{diam}

\DeclareMathOperator{\df}{d_0}
\DeclareMathOperator{\dof}{d_1}
\DeclareMathOperator{\dkf}{d_k}
\DeclareMathOperator{\dkof}{d_{k+1}}

\newcommand{\fine}{{\mathcal{C}^\dagger}}
\newcommand{\onefine}{{\mathcal{C}^\dagger_1}}
\newcommand{\kfine}{{\mathcal{C}^\dagger_k}}
\newcommand{\kpofine}{\mathcal{C}^\dagger_{k+1}}
\newcommand{\finfine}{{\mathcal{C}^\dagger_{<\infty}}}
\newcommand{\E}{\mathcal{E}}
\newcommand{\surf}{S_{g,b}}

\newcommand{\pit}[1]{\medskip\noindent\textit{#1}\textit{.}}

\newcommand{\p}[1]{\medskip\noindent\textbf{#1}\textbf{.}}

\title{Hyperbolicity, topology, and combinatorics of fine curve graphs and variants}
\author{Roberta Shapiro}
%\date{January 2023}
\email{shaprh@umich.edu}

\begin{document}

\maketitle
\begin{abstract}
    Given a surface, the fine $k$-curve graph of the surface is a graph whose vertices are simple closed essential curves and whose edges connect curves that intersect in at most $k$ points. We note that the fine $k$-curve graph is hyperbolic for all $k$ and, for $k\geq 2,$ show that it contains as induced subgraphs all countable graphs. We also show that the direct limit of this family of graphs, which we call the finitary curve graph, has diameter 2, has a contractible flag complex, contains every countable graph as an induced subgraph, and has as its automorphism group the homeomorphism group of the surface. Finally, we explore some finite graphs that are not induced subgraphs of fine curve graphs.
   \end{abstract}
 
\section{Introduction}

Let $S$ be an orientable surface, sometimes denoted $\surf$ (if $S$ has genus $g$ and $b$ boundary components) or just $S_g$ (if $S$ is closed and compact). The \textit{fine $k$-curve graph of $S$}, denoted $\kfine(S),$ is the graph whose vertices are embedded simple, closed, essential curves. Two vertices $u$ and $v$ are connected by an edge if $|u\cap v|\leq k.$ When $k=0$, we call the resulting graph the \emph{fine curve graph}, denoted $\fine(S)$. %Unless otherwise stated, we will assume that $S$ is closed, connected, orientable, and of genus at least $2.$ 

We would be amiss to not include the observation that $\kfine(S)$ is hyperbolic.

\begin{obsthm}\label{maintheoremkfine}
    Let $S_g$ be a closed, orientable surface with $g\geq 2.$ Then, $\kfine(S_g)$ is hyperbolic.
\end{obsthm}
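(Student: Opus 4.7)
The plan is to prove hyperbolicity of $\kfine(S_g)$ by showing it is quasi-isometric to the fine curve graph $\fine(S_g)$, which is hyperbolic by the theorem of Bowden, Hensel, and Webb. Both graphs share the same vertex set, and every $\fine$-edge is a $\kfine$-edge, so the identity map is $1$-Lipschitz $\fine(S_g) \to \kfine(S_g)$. The reverse bound reduces to showing
\[
(\ast)\qquad \exists\, C = C(k, g) \text{ such that } |u \cap v| \leq k \implies d_{\fine(S_g)}(u, v) \leq C,
\]
for then each $\kfine$-edge corresponds to a $\fine$-path of length at most $C$, making the identity in the reverse direction $C$-Lipschitz.

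I would establish $(\ast)$ by induction on $k$, the base case $k = 0$ being immediate. For the inductive step, I would split into two cases. If $u \cup v$ does not fill $S_g$ --- which is automatic when $k \leq 2g - 2$ by an Euler characteristic count --- then some complementary component of $u \cup v$ contains an essential simple closed curve $w$ of $S_g$ disjoint from both $u$ and $v$, so $d_{\fine(S_g)}(u, v) \leq 2$. If $u \cup v$ fills $S_g$ (forcing $k \geq 2g-1$), I would perform a surgery at an intersection point $p$ of $u$ and $v$, then perturb slightly off $u \cup v$, to produce an essential simple closed curve $w$ that is disjoint from $u$ and intersects $v$ in strictly fewer than $k$ points. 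The inductive hypothesis then gives $d_{\fine(S_g)}(w, v) \leq C(k-1, g)$, so $d_{\fine(S_g)}(u, v) \leq 1 + C(k-1, g)$, and the induction closes.

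The main obstacle is the filling case: one must simultaneously ensure that the surgered-and-perturbed curve $w$ is essential in $S_g$ and that $|w \cap v| < k$. The hypothesis $g \geq 2$ is what makes this possible: an Euler characteristic count on the complementary polygons of the filling pair $u \cup v$ gives $F = k + 2 - 2g$ polygons with total boundary length $4k$, so the average polygon has strictly more than four sides when $g \geq 2$. Since polygon degrees are even, some complementary region has at least six sides, providing the topological room needed to route an essential curve in $S_g \setminus u$ that avoids at least one arc of $v \setminus u$. Verifying the essentiality condition carefully and producing an explicit surgery realizing these constraints is the main technical content of the proof.
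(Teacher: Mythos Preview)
Your strategy matches the paper's: prove $\kfine(S_g)$ is quasi-isometric to $\fine(S_g)$ by bounding $d_0$ in terms of intersection number via induction and surgery, then invoke Bowden--Hensel--Webb. The paper packages the induction as a chain of quasi-isometries $\kfine\sim\kpofine$, showing $d_k(u,v)\leq 3$ whenever $|u\cap v|=k+1$ through a touching\,/\,bigon\,/\,minimal-position case split (the last case deferred to the standard Masur--Minsky surgery); you instead go directly for $d_0(u,v)\leq C(k)$ with a non-filling\,/\,filling split. These are the same idea, organized differently.

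Two technical points need attention. First, your Euler-characteristic polygon count in the filling case presumes that every point of $u\cap v$ is a transverse crossing; if some are touching intersections you should dispose of them first (as the paper does explicitly in its Case~1), since otherwise the side-count parity argument breaks down. Second, and more substantively, your filling-case inductive step asks for an essential $w$ disjoint from $u$ with $|w\cap v|<k$, which is stronger than what the paper's induction requires. Your stated mechanism---use the hexagon to route $w$ in $S_g\setminus u$ avoiding one arc of $v\setminus u$---does not by itself yield $|w\cap v|<k$, because nothing prevents $w$ from meeting each of the remaining $k-1$ arcs many times. You need a construction that controls the \emph{total} count: for instance, start from a pushoff of $u$ (which meets $v$ in exactly $k$ points, one per arc) and then reroute locally through the $\geq 6$-gon to drop a crossing while creating none. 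Alternatively, relaxing your inductive target to the paper's weaker form---find $w$ with $|w\cap u|,\,|w\cap v|\leq k$ when $|u\cap v|=k+1$---makes the surgery step routine.
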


Theorem~\ref{maintheoremkfine} is known for the fine curve graph due to work of Bowden--Hensel--Webb \cite{Bowden_Hensel_Webb_2021} and the proof reduces to this fact via quasi-isometries. 

We now define the \textit{finitary curve graph of $S$,} denoted $\finfine(S),$ to be
\[ \finfine(S):= \varinjlim_k \kfine(S).\] 
The vertices of $\finfine(S)$ are again embedded essential simple closed curves in $S,$ but now edges connect curves that intersect at finitely many points.

Our second  theorem computes the diameter of $\finfine(S)$ in the path metric.

\begin{theorem}\label{maintheoremfinitefine}
    Let $S=S_{g,b}$ be a compact, orientable surface with $g\geq 1$ or $b\geq 4.$ Then, $\diam(\finfine(S))=2$.
\end{theorem}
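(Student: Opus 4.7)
The plan is to show $\diam(\finfine(S)) \geq 2$ and $\diam(\finfine(S)) \leq 2$ separately.

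For the lower bound, I would exhibit two essential simple closed curves $\alpha, \beta \subset S$ with $|\alpha \cap \beta| = \infty$, which ensures they are not adjacent in $\finfine(S)$. Fix any essential simple closed curve $\alpha \subset S$ (which exists by the hypothesis on $g,b$) and an annular bicollar $N(\alpha) \cong S^1 \times (-1,1)$ identifying $\alpha = S^1 \times \{0\}$. Take $\beta$ to be the graph $\{(\theta, f(\theta)) : \theta \in S^1\}$ of a continuous function $f : S^1 \to (-1,1)$ whose zero set is infinite (e.g.\ $f$ vanishes on a convergent sequence together with its limit). Then $\beta$ is a topologically embedded simple closed curve isotopic to $\alpha$ inside $N(\alpha)$ (hence essential in $S$) and $|\alpha \cap \beta| = |f^{-1}(0)| = \infty$.

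For the upper bound, given distinct essential simple closed curves $\alpha, \beta$, I seek $\gamma$ with $|\gamma \cap \alpha|, |\gamma \cap \beta| < \infty$. The argument splits into two cases depending on whether $\alpha \cup \beta$ topologically fills $S$. If it does not, some component of $S \setminus (\alpha \cup \beta)$ contains an essential simple closed curve of $S$, which I take as $\gamma$; it is disjoint from $\alpha \cup \beta$. In the filling case, I would proceed by a local surgery: pick $p \in \alpha \setminus \beta$ and $q \in \beta \setminus \alpha$, and choose disjoint small disks $D_p \ni p$, $D_q \ni q$ such that $D_p \cap \alpha$ is a single arc and $D_p \cap \beta = \emptyset$, and symmetrically for $D_q$. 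Construct $\gamma$ by piecing together a transverse arc in $D_p$ crossing $\alpha$ once, a transverse arc in $D_q$ crossing $\beta$ once, and two connecting arcs in $S \setminus (D_p \cup D_q)$ that meet $\alpha \cup \beta$ only finitely often.

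The main obstacle is controlling the connecting arcs' intersections with $\alpha \cup \beta$ in the filling case, since $\alpha$ and $\beta$ may be topologically wild. The key sub-lemma I would prove is that any two points of $S \setminus (\alpha \cup \beta)$ can be joined by an arc meeting $\alpha \cup \beta$ in only finitely many points; this follows from the fact that every point of $\alpha \setminus \beta$ has a neighborhood in which $\alpha$ is a single arc and $\beta$ is absent, enabling a controlled crossing, and iterating for finitely many such crossings. The hypothesis $g \geq 1$ or $b \geq 4$ is used to guarantee that $\gamma$ can be made essential (not bounding a disk in $S$), by routing a connecting arc through a handle or past an additional boundary component to ensure the resulting simple closed curve represents a nontrivial class.
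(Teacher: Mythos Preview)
Your lower-bound argument is fine and fills a small gap the paper leaves implicit.

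The upper bound, however, has a genuine gap at exactly the point you flag as the ``main obstacle.'' Your key sub-lemma asserts that any two points of $S\setminus(\alpha\cup\beta)$ can be joined by an arc meeting $\alpha\cup\beta$ only finitely often, and you justify this by saying one can make a controlled crossing near each point of $\alpha\setminus\beta$ and then ``iterate for finitely many such crossings.'' But \emph{why finitely many?} When $|\alpha\cap\beta|=\infty$ the intersection can be a Cantor set, the complement $S\setminus(\alpha\cup\beta)$ can have infinitely many components, and an arbitrary path between your two points may be forced to pass through infinitely many of them. Nothing in your sketch rules this out; the claim that finitely many crossings suffice is precisely the content of the theorem and cannot be assumed. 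Your filling/non-filling dichotomy is also awkward here, since the usual notion of filling presupposes transverse or at least finite intersection, and the construction of an embedded essential $\gamma$ from four glued arcs (two local crossings plus two connectors) adds further unaddressed issues about simplicity.

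The paper avoids all of this by a more structured argument. It takes an embedded annulus $\nu$ with $\alpha$ as one boundary component and seeks $\gamma$ as a core curve of $\nu$, i.e.\ a parallel copy of $\alpha$. The crucial input is a compactness lemma (Lemma~\ref{lemma:finitecrossings}): any curve $\beta$ can have only finitely many strands that cross $\nu$ from one boundary to the other, because an accumulation of such strands would force points of one boundary curve to converge to a point of the other. Combined with a similar bound on the loops of $\beta$ in $\nu$ that reach a fixed interior core (Lemma~\ref{lemma:finitefarloops}), this reduces the problem to surgering a candidate core curve around finitely many obstructions (Lemma~\ref{lemma:crossingintonly}). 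The annular setup gives the finiteness for free and makes essentiality of $\gamma$ automatic, with no filling hypothesis and no delicate arc-gluing. If you want to repair your approach, the missing ingredient is exactly a compactness argument of this type to bound the number of crossings a connecting arc must make.
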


So $\finfine(S_g)$ is quasi-isometric to a point. Define the \textit{flag complex} of $\finfine(S_g)$ to be the simplicial complex where vertices $v_0,\ldots,v_k$ span a $k$-cell if $|v_i\cap v_j|<\infty$ for all $i\neq j.$ We may then ask whether the flag complex $\finfine(S_g)$ is homotopic to a point. This question is answered in the affirmative in the following theorem.

\begin{theorem}\label{maintheoremcontractibility}
    Let $\surf$ be an orientable surface with $g\geq 1$ or $b\geq 4.$ Then the flag complex of $\finfine(\surf)$ is contractible.
\end{theorem}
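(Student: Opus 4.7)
The plan is to establish the \emph{finite common neighbor} (FCN) property for $\finfine(\surf)$: for any finite collection $v_1, \ldots, v_n$ of essential simple closed curves in $\surf$, there exists an essential simple closed curve $w$ with $|w \cap v_i| < \infty$ for every $i$. Once FCN is in hand, contractibility of the flag complex $X$ follows by a standard argument. Any continuous map $f\colon S^k \to X$ factors through a finite subcomplex $Y$ by compactness; if $v_1, \ldots, v_n$ are the vertices of $Y$ and $w$ is a common neighbor supplied by FCN, then each simplex of $Y$ has all of its vertices adjacent to $w$ (so it remains a simplex after adjoining $w$), and the cone $\{w\}\ast Y$ is a subcomplex of $X$ containing $Y$. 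The cone is contractible, so $f$ is null-homotopic. Since $X$ is a CW complex, this shows $X$ is contractible.

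For FCN, the cases $n=1$ and $n=2$ are trivial and Theorem~\ref{maintheoremfinitefine}, respectively. For general $n$, I would adapt the two-curve construction. After reducing to the case where no $v_i$ lies in $\bigcup_{j\neq i} v_j$ (such a $v_i$ would be redundant, since then $|w\cap v_i|\leq \sum_{j\neq i}|w\cap v_j|$), pick a point $p_i \in v_i \setminus \bigcup_{j\neq i} v_j$ for each $i$, and use a topological collar of $v_i$ in a neighborhood of $p_i$ disjoint from the other $v_j$ to produce a short transverse arc $\alpha_i$ through $p_i$ with $\alpha_i \cap V = \{p_i\}$, where $V = v_1 \cup \cdots \cup v_n$. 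Then assemble $w$ by concatenating the $\alpha_i$ with connecting arcs chosen to have finite intersection with $V$. The hypothesis $g \geq 1$ or $b \geq 4$ supplies enough room in $\surf$ to arrange these connections so that the resulting $w$ is simple, closed, and essential.

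The \textbf{main obstacle} is producing the connecting arcs with finite intersection with $V$. Since each $v_i$ is only a topological embedding of $S^1$, the set $V$ can be wild: pairwise intersections may form Cantor sets, and $V$ can even have positive planar measure, so a generic smooth or PL path between endpoints of the $\alpha_i$ can meet $V$ in an infinite subset. The key local observation is that $V$ is still a \emph{finite} union of 1-manifolds, so near each of its points only finitely many arcs of the $v_i$ pass through, each admitting an annular collar. This local control should reduce the connection problem to a bounded number of small, controlled crossings of $V$, extending the construction underlying Theorem~\ref{maintheoremfinitefine} from two curves to $n$. An alternative route is induction on $n$: given $w_n$ adjacent to $v_1, \ldots, v_n$, produce $w_{n+1}$ by a surgery performed in a small region where $v_{n+1}$ is present but $v_1, \ldots, v_n$ are not, preserving the previously-established adjacencies while forcing finite intersection with $v_{n+1}$.
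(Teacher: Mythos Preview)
Your high-level framework is exactly the paper's: establish the finite common neighbor property (FCN), then cone off any simplicial sphere and apply Whitehead. The gap is entirely in your proposed arguments for FCN.

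The first construction---assembling $w$ from transverse arcs $\alpha_i$ and connecting arcs $\beta_i$---does not actually gain anything. The $\alpha_i$ contribute one controlled crossing each, but the entire difficulty of FCN is pushed into the $\beta_i$: finding an arc between two prescribed points that meets $V=\bigcup v_i$ finitely often is the same problem you started with, just for arcs rather than curves. Your ``key local observation'' that each $v_i$ has an annular collar does not help when the $v_i$ interact with each other, because the collars do not fit together; two curves can cross each other's collar infinitely often. The inductive route has a more concrete flaw: the infinitely many intersections of $w_n$ with $v_{n+1}$ need not be confined to a region disjoint from $v_1,\ldots,v_n$, so surgery in such a region cannot remove them. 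You would have to modify $w_n$ in regions where all the $v_i$ are present, and then preserving the existing finite-intersection conditions is again the full problem.

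The paper's idea for FCN (Lemma~\ref{lemma:finitelymanyintersections}) is to first isolate the ``messy'' part of $V$. For each pair $i,j$, pull back $\gamma_i\cap\gamma_j$ to the domain circle of $\gamma_i$ and take the topological boundary $P_i$; the union $\mathcal{E}=\bigcup_i\gamma_i(P_i)$ is compact and totally disconnected. Deleting $\mathcal{E}$ leaves a connected surface $\Sigma'$ in which the $\gamma_i$ become a finite collection of \emph{pairwise disjoint} arcs (or arcs that coincide entirely). Now the annulus/strip construction from the diameter-two argument (Proposition~\ref{prop:lotsofcurves}) applies directly to produce the desired $w$ in any isotopy class. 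The point you were missing is that the wildness of $V$ is concentrated on a set small enough to excise.
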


We also study more combinatorial properties of the fine curve graph and finitary curve graph. In the following theorems, we show that any finite graph can be embedded as an induced subgraph of the fine curve graph of a surface of high enough genus or the fine $k$-curve graph of any surface that contains an essential embedded annulus.

\begin{theorem}\label{maintheorem:finite.in.fine}
    Let $G$ be a finite graph on $n$ vertices. Then $G$ is isomorphic to an induced subgraph of $\fine(S_g)$ with $g\geq {n \choose 2}-n+1$, so $g=O(n^2).$
\end{theorem}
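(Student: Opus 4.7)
\medskip
\noindent\textbf{Proof plan.}
I would proceed by explicit construction. Given $G$ on vertices $v_1,\ldots,v_n$, the plan is to build $n$ essential simple closed curves $c_1,\ldots,c_n$ on $S_g$ with $g=\binom{n}{2}-(n-1)=\binom{n-1}{2}$, so that $c_i\cap c_j=\emptyset$ iff $\{v_i,v_j\}\in E(G)$, thereby realizing $G$ as an induced subgraph of $\fine(S_g)$.

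The exact genus $\binom{n-1}{2}$ equals the number of pairs $\{i,j\}$ with $2\le i<j\le n$, which suggests attaching one handle $H_{ij}$ to a base sphere for each such pair. In the construction, each $c_i$ with $i\ge 2$ passes through the $n-2$ handles $\{H_{ij}\}_{j\ne i,\,j\ge 2}$, and the distinguished curve $c_1$ passes through every handle. Inside each handle $H_{ij}$, three arcs appear (one each from $c_1$, $c_i$, and $c_j$); placing their endpoints in a ``parallel'' cyclic order on the two boundary circles lets me make any chosen subset of the three pairs cross transversally inside the handle (by inserting local crossings) independently of the other pairs.

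Given this handle-level flexibility, $|c_i\cap c_j|$ for $i,j\ge 2$ is controlled entirely by the single handle $H_{ij}$, and I would set it to zero or positive according to the $G$-adjacency of $v_i,v_j$. For $|c_1\cap c_j|$, the count accumulates over the $n-2$ handles $H_{jk}$ ($k\ne j$, $k\ge 2$); I would make the total zero when $\{v_1,v_j\}\in E(G)$ by introducing no $(c_1,c_j)$-crossing in any such handle, and otherwise force at least one. These choices are mutually compatible because within each handle the three pair-settings are independent. Essentiality of each $c_i$ follows from its traversal of handle cores, yielding a nonzero homology class.

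The main obstacle is the global layout on the base sphere: each curve is a cyclic concatenation of handle-arcs and base-arcs, and all base-arcs from all $n$ curves must be drawn disjointly (so the only crossings occur inside handles, where they are prescribed) while each individual curve remains simple. Concretely this is a planar routing problem on a sphere with $2\binom{n-1}{2}$ disks removed, and I would execute it by first choosing a convenient cyclic order in which each curve visits its handles and then routing the base-arcs one curve at a time in the complement of previously drawn arcs. Verifying that this routing is always possible — together with the handle-level decoupling above — is the technical heart of the argument.
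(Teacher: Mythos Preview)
Your plan differs substantially from the paper's, and the difference matters because the paper's construction sidesteps exactly the obstacle you flag as the technical heart.

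The paper starts not from a sphere but from a torus carrying $n$ parallel disjoint curves $\gamma_0,\ldots,\gamma_{n-1}$ in cyclic order, so the complete graph on $n$ vertices is realized from the outset. For each pair $\{i,j\}$ that is \emph{not} cyclically adjacent on the torus (there are $\binom{n}{2}-n$ such pairs) the paper attaches a handle $A_{i,j}$ with one foot in a small disk meeting only $\gamma_i$ and the other meeting only $\gamma_j$. No curve ever traverses any handle; the handles are merely tunnels through which $\gamma_i$ may optionally be isotoped to reach $\gamma_j$. To encode a non-edge $\{v_i,v_j\}$ of $G$ one performs a local finger move of $\gamma_i$---through $A_{i,j}$ if the pair is cyclically non-adjacent, or directly on the torus if adjacent---creating an intersection with $\gamma_j$ in a neighborhood disjoint from every other curve. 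Edges of $G$ require no action at all. The genus count is $1+\bigl(\binom{n}{2}-n\bigr)$.

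This dissolves your base-sphere routing problem: the curves are already simple, essential, and pairwise disjoint, and every modification is a local isotopy supported near only the two curves it concerns. In your plan, by contrast, each curve threads multiple handles and $c_1$ threads all $\binom{n-1}{2}$ of them, so before you can even begin encoding $G$ you must solve a simultaneous disjoint-arc-routing problem on a planar surface with $2\binom{n-1}{2}$ boundary circles, with the asymmetric role of $c_1$ as an added complication. You acknowledge this step is the crux but do not carry it out, and it is a genuine gap: while likely solvable with a careful grid-style layout of the handle feet, it is not obvious, and the paper's ``start disjoint, then locally create intersections'' idea makes it entirely unnecessary.
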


% \begin{theorem}\label{maintheorem.finite.in.finitary}
%     Let $G$ be a finite graph. Then $G$ is isomorphic to an induced subgraph of $\finfine(\surf)$ with $g\geq 1$ or $b\geq 4.$
% \end{theorem}

% Our final theorem relates to the Erd\H{o}s-R\'enyi graph, which is the unique (up to isomorphism) random graph on countably many vertices.

% \begin{theorem}\label{maintheorem.erg.in.finitary}
%     Let $ERG$ be the Erd\H{o}s-R\'enyi graph. Then $ERG$ is isomorphic to an induced subgraph of $\finfine(\surf)$ with $g\geq 1$ or $b\geq 4.$
% \end{theorem}

\begin{theorem}\label{maintheorem.countable.subgraphs.in.finitary}
    Let $G$ be a graph on countably many vertices and $\surf$ be an orientable surface with $g\geq 1$ or $b\geq 4.$ Then $G$ is isomorphic to an induced subgraph of $\finfine(\surf)$ and $\mathcal{C}_k^\dagger(\surf)$ for $k\geq 2$.
\end{theorem}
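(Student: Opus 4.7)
The plan is to construct, for each enumerated vertex $v_n$ of $G$, a simple closed essential curve $c_n \subset \surf$ such that $|c_n \cap c_m| = 2$ when $\{v_n, v_m\} \in E(G)$ and $|c_n \cap c_m| = \infty$ when $\{v_n, v_m\} \notin E(G)$. Because $2 < \infty$ and $2 \leq k$ for every $k \geq 2$, a single family $\{c_n\}$ would then realize $G$ as an induced subgraph of both $\finfine(\surf)$ and $\kfine(\surf)$ at once.

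I would first fix an essential annulus $A \subset \surf$ whose complement $B := \surf \setminus A$ contains an open disc $D$; this is possible since $g \geq 1$ or $b \geq 4$. Identify $A \cong S^1 \times (0,1)$ and pick pairwise distinct heights $h_n \in (0,1)$, pairwise distinct points $x_n \in S^1$, a small $\delta > 0$, and a family of pairwise disjoint closed sub-discs $D_{nm} \subset D$, one per non-edge $\{v_n, v_m\}$ of $G$. I would then declare $c_n$ to be the simple closed loop obtained by concatenating the horizontal arc $(S^1 \setminus (x_n-\delta, x_n+\delta)) \times \{h_n\}$, the two vertical segments $\{x_n \pm \delta\} \times [h_n, 1]$, and a simple arc $\tau_n \subset D$ from $(x_n-\delta, 1)$ to $(x_n+\delta, 1)$ that visits each $D_{nm}$ with $v_m$ a non-neighbor of $v_n$ and coincides with $\tau_m$ on a prescribed sub-arc inside $D_{nm}$; outside these prescribed shared sub-arcs, the $\tau_n$'s are to be pairwise disjoint. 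Each resulting $c_n$ is isotopic to the core of $A$ (the detour into $D$ is contractible) and hence essential.

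To compute $|c_n \cap c_m|$ for $n \neq m$ I would use that the horizontals lie at distinct heights and so are disjoint. Up to swapping $n$ and $m$, I may assume $h_m > h_n$, in which case each of the two verticals of $c_n$ meets the horizontal of $c_m$ at exactly one transverse point (using $x_n \neq x_m$ and $\delta$ small), the verticals of $c_m$ do not reach the horizontal of $c_n$, and the verticals do not meet each other; these annulus interactions contribute exactly two intersection points. Inside $D$, the arcs $\tau_n, \tau_m$ are disjoint outside $D_{nm}$: when $\{v_n, v_m\}$ is an edge there is no such $D_{nm}$ and the contribution is $0$; when it is a non-edge they share a sub-arc of positive length, contributing infinitely many points. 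Summing gives $|c_n \cap c_m| \in \{2, \infty\}$ exactly as required. The hard part will be producing the family $\{\tau_n\}$ of simple arcs in the planar disc $D$ with the specified sharing pattern, since each $\tau_n$ may need to visit countably many prescribed patches; I plan to handle this by enumerating the non-edges and greedily drawing one arc at a time, routing each new arc around the existing ones and choosing its shared sub-arcs in unoccupied regions inside each $D_{nm}$. Because everything takes place in the planar disc $D$, this reduces to a combinatorial routing problem handled by induction on the enumeration.
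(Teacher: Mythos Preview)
Your high-level strategy agrees with the paper's: build essential curves in a fixed annulus so that every pair meets in at most two points, then push the count to infinity exactly on non-edges. The divergence is in how you create the infinite intersections. You export that job to a family of arcs $\tau_n$ in an external disc $D$, and this is where the argument is incomplete.

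The routing problem you defer to a greedy induction is not innocuous. Consider three vertices with $v_1\sim v_2$ but $v_1\not\sim v_3$ and $v_2\not\sim v_3$. After you draw $\tau_1$ and $\tau_2$ disjointly, $\tau_1$ separates $D$; if $\tau_2$ and the endpoints $p_3^\pm$ happen to lie on opposite sides, then $\tau_3$ must reach $\tau_2$ yet may touch $\tau_1$ only along a single shared sub-arc, which lets it switch sides at most once --- too few to both reach $\tau_2$ and return to $p_3^+$. One can try to preempt this by drawing earlier arcs in thin corridors that avoid all future endpoints, but then later corridors can still shield earlier arcs from new ones (e.g.\ if $\tau_2$ approaches $\tau_1$ and returns, a $\tau_3$ that must avoid $\tau_2$ may no longer have access to $\tau_1$ from the correct side). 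None of this is obviously fatal, but your one-line appeal to induction does not handle it. A secondary issue: with a single $\delta$ and infinitely many $x_n$ accumulating in $S^1$, your ``exactly two annulus intersections'' claim fails for infinitely many pairs (one vertical lands in the other curve's gap); the count is still $\le 2$, so adjacency survives, but the argument as written is not correct, and you must also avoid the measure-zero event $|x_n-x_m|=2\delta$, which would make two verticals coincide on an interval.

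The paper avoids the routing problem entirely by keeping all modifications local and inside the annulus. Start with genuinely disjoint parallel cores $w_n=\{\tfrac1{n+1}\}\times S^1$. For each $n\ge 2$ pick a small rectangle $A_n\subset A$ (the $A_n$ pairwise disjoint) and isotope $w_n$ inside $A_n$ by a single finger that crosses each $w_i$, $i<n$, in exactly two points; call the result $w_n'$. Now every pair $w_i',w_j'$ meets in exactly two points, and crucially each intersection point has a neighborhood $P_{i,j}$ meeting no third curve. Finally, for each non-edge $\{v_i,v_j\}$ with $i<j$, isotope $w_j'$ \emph{inside} $P_{i,j}$ to intersect $w_i'$ infinitely often. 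Because the $P_{i,j}$ are disjoint and each $w_j'$ is modified in only finitely many of them (those with $i<j$), no global routing is needed and no unintended intersections are created. This replaces your hard step by a sequence of finitely many compactly supported isotopies per curve.
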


% As corollaries, we have the following.
% \begin{corollary}\label{cor:finite.in.finitary}
%     Let $G$ be a finite graph. Then $G$ is isomorphic to an induced subgraph of $\finfine(\surf)$ and $\mathcal{C}_k^\dagger(\surf)$ with $k\geq 2.$
% \end{corollary}

We have a corollary that relates to the Erd\H{o}s-R\'enyi graph, which is the unique (up to isomorphism) random graph on countably many vertices.

\begin{corollary}\label{cor.erg.in.finitary}
    Let $ERG$ be the Erd\H{o}s-R\'enyi graph. Then $ERG$ is isomorphic to an induced subgraph of $\finfine(\surf)$ and $\mathcal{C}_k^\dagger(\surf)$ with $k\geq 2.$
\end{corollary}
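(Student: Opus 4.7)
The plan is essentially to invoke Theorem~\ref{maintheorem.countable.subgraphs.in.finitary} as a black box, since the Erd\H{o}s--R\'enyi graph is by definition a graph on countably many vertices (indeed, the essentially unique such graph satisfying the extension axiom). Thus the corollary is immediate: take $G = ERG$ in Theorem~\ref{maintheorem.countable.subgraphs.in.finitary}, which asserts that every countable graph embeds as an induced subgraph of $\finfine(\surf)$ and of $\mathcal{C}_k^\dagger(\surf)$ for $k \geq 2$.

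In writing the proof, I would first recall (or cite) the standard fact that $ERG$ has a countable vertex set. I would then state that applying Theorem~\ref{maintheorem.countable.subgraphs.in.finitary} with $G = ERG$ produces the desired induced embeddings into $\finfine(\surf)$ and $\mathcal{C}_k^\dagger(\surf)$ for $k \geq 2$, under the standing hypothesis that $g \geq 1$ or $b \geq 4$.

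There is no real obstacle here, so I would keep the proof to two or three sentences. The only mild subtlety worth mentioning is that this corollary does not require any extra work to preserve the randomness or extension property of $ERG$ — we only need an isomorphic copy of $ERG$ as an induced subgraph, and isomorphism of graphs is exactly the notion preserved by Theorem~\ref{maintheorem.countable.subgraphs.in.finitary}. If I wanted to add value beyond a direct appeal, I might point out that as a consequence, any countable graph embeds into $\finfine(\surf)$ via the induced copy of $ERG$, which is essentially a restatement of the universality properties already encoded in the preceding theorem.
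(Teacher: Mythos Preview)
Your proposal is correct and matches the paper's approach: the corollary is stated immediately after Theorem~\ref{maintheorem.countable.subgraphs.in.finitary} and is treated as an immediate consequence of it, with no separate proof given. Your observation that the only work is noting $ERG$ is countable and then applying the theorem is exactly the intended argument.
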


% {\color{red}{Questions:
% \begin{enumerate}
%     \item is the finitary curve graph self-complementary? (The Rado graph is) (complement ~= original graph)
%     \item is there a better lower bound for complexity of a surface s.t. FCG contains any subgraph on $n$ vertices?
%     \item Aut of single isotopy class FCG (maybe more natural with Ryan)
% \end{enumerate}
% }}

Define the \emph{curve graph} of a surface $\Sigma$ to be the graph whose vertices are isotopy classes of essential simple closed curves and whose edges connect pairs of isotopy classes that admit disjoint representatives. Bering--Gaster prove that the curve graph of a surface $\Sigma$ has $ERG$ as an induced subgraph if and only if $\Sigma$ has infinite genus \cite{bg}. One would hope that the fine curve graph would grant us more flexibility in this regard, but alas, the proof of Bering--Gaster holds. 

However, we have a new method of constructing finite graphs that, given a surface of genus $g$ with $b$ boundary components, there is a graph with $O(g+b)$ vertices does not appear as an induced subgraph. We call such graphs \emph{inadmissible}.

\begin{theorem}[Construction of inadmissible graphs]\label{maintheorem:forbiddensubgraphs}
    Let $\Sigma=S_{g,b}$. Then there exists a graph $G_{\Sigma}$ that does not appear as an induced subgraph of $\fine(\Sigma).$  
\end{theorem}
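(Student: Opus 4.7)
The plan is to exploit the classical topological bound that any multicurve on $\Sigma = \surf$ has at most $3g-3+b$ pairwise non-isotopic components. Any clique in $\fine(\Sigma)$ corresponds to a set of pairwise disjoint essential simple closed curves, and any such set decomposes into at most $3g-3+b$ \emph{parallel families}---maximal subfamilies of mutually isotopic curves sitting linearly nested in an annular neighborhood of their common isotopy class. The goal is to design $G_\Sigma$ so that any induced-subgraph realization inside $\fine(\Sigma)$ would force strictly more than $3g-3+b$ pairwise non-isotopic disjoint curves, which is impossible.

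The construction has two layers. First, a core clique $V=\{v_1,\ldots,v_{k_0}\}$ of size $k_0 = (3g-3+b)+1$ whose vertices, being pairwise adjacent, must be realized as pairwise disjoint curves. Second, a small system of $O(g+b)$ \emph{witness} vertices whose adjacency pattern to $V$ (and to each other) is arranged so that, in any realization, each pair $v_i, v_j$ is forced into distinct parallel families. If this is achieved, the realized $V$ must partition into more than $3g-3+b$ parallel families, contradicting the multicurve bound. To keep the total vertex count linear in $g+b$ rather than quadratic, the witnesses are reused across pairs via a compact incidence structure---an $O(g+b)$-sized collection of witnesses that collectively separate every pair $\{v_i, v_j\}$.

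The main obstacle is the second layer. Because $\fine(\Sigma)$ records only disjointness---not intersection number, parity, or isotopy class---a single witness $w$ that meets $v_i$ and is disjoint from $v_j$ does \emph{not} by itself force $v_i$ and $v_j$ to be non-isotopic: if they cobound an annulus $A$, then $w$ could approach $A$ from outside and execute a U-turn crossing $v_i$ twice while avoiding $v_j$. The technical core of the proof is therefore to design witnesses in coordinated clusters whose joint adjacency pattern rules out such U-turn configurations---for example by pairing each discriminating witness with an ``anti-U-turn'' partner that any loop entering $A$ would be forced to cross---and to verify that the resulting linear-size witness system rigidifies the configuration so that the parallel-family partition of $V$ is faithfully reflected in the graph's adjacencies. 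I expect this rigidification step to be the most delicate part of the argument.
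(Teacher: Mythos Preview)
Your proposal has a genuine gap: the entire weight of the argument rests on the ``rigidification'' step, and you have not carried it out. You correctly identify the obstruction---a witness $w$ disjoint from $v_j$ but meeting $v_i$ does not force $v_i$ and $v_j$ to be non-isotopic, because $w$ can make a U-turn inside the annulus they cobound---but ``pairing each discriminating witness with an anti-U-turn partner'' is a hope, not a construction. In fact the paper's own examples make this hope look fragile: arbitrarily large half-graphs (and arbitrarily large cliques) can be realized entirely by parallel curves inside a single annulus, so quite elaborate adjacency patterns among witnesses and clique vertices are compatible with \emph{all} curves lying in one isotopy class. You would need to exhibit a concrete finite pattern that provably cannot occur among parallel curves in an annulus, and you have not done so; until you do, there is no proof here.

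The paper's route avoids this problem altogether. Rather than trying to force individual clique members into distinct isotopy classes, it first builds (via odd cycles and joins) a specific finite graph $\Gamma$ that is inadmissible in $\fine(S_{0,2})$ and in $\fine(S_{0,3})$. It then takes the iterated join $\mathcal{G}=\#_{2g+b-1}(\Gamma\amalg\{v\})$. In any realization, the join structure forces the $2g+b-1$ copies of $\Gamma\amalg\{v\}$ to be supported on $2g+b-1$ pairwise disjoint essential subsurfaces of $\Sigma$; since a pants decomposition of $\Sigma$ has only $2g+b-2$ pieces, at least one of these subsurfaces must be an annulus or a pair of pants, where $\Gamma$ is already known to be unrealizable. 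The counting here is of \emph{subsurfaces}, not isotopy classes of curves, and the ``witness'' role is played by a graph $\Gamma$ that is globally inadmissible in small pieces rather than by vertices that locally separate isotopy classes. This sidesteps exactly the U-turn issue you flagged.
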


Finally, we find the automorphism group of the finitary curve graph in the following theorem.
\begin{theorem}\label{maintheorem:aut}
    Let $S_g$ be a closed oriented surface with $g\geq 1.$ Then the natural map \[\Phi: \Homeo(S_g)\to \Aut\finfine(S_g)\] is an isomorphism.
\end{theorem}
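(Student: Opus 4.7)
The map $\Phi$ sends a homeomorphism $f$ to the vertex bijection $c \mapsto f(c)$, which is a bona fide automorphism of $\finfine(S_g)$ because homeomorphisms preserve the cardinality of $\alpha \cap \beta$. My plan is to address injectivity and surjectivity separately.

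For injectivity, suppose $\Phi(f) = \id$, i.e., $f$ fixes every essential simple closed curve setwise. For any $p \in S_g$, I would exhibit three essential simple closed curves $\gamma_1, \gamma_2, \gamma_3$ through $p$ with pairwise intersection equal to $\{p\}$; such curves exist because $g \geq 1$ (choose three curves meeting transversally at $p$ with distinct local slopes, then extend them globally without introducing further common intersections). Since $f(\gamma_i) = \gamma_i$ setwise for each $i$, the image $f(p)$ lies on all three curves, so $f(p) \in \gamma_1 \cap \gamma_2 \cap \gamma_3 = \{p\}$, forcing $f = \id$.

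For surjectivity, given $\phi \in \Aut \finfine(S_g)$, my approach is to show that $\phi$ preserves the subset of edges of $\finfine$ corresponding to disjoint pairs, so that $\phi$ restricts to an automorphism of the fine curve graph $\fine(S_g)$. I then invoke the theorem of Long--Margalit--Pham--Verberne--Yao (together with the analogous result in genus one) identifying $\Aut \fine(S_g) \cong \Homeo(S_g)$; this produces an $f \in \Homeo(S_g)$ with $\Phi(f)|_{\fine} = \phi|_{\fine}$. Since both $\Phi(f)$ and $\phi$ agree on all vertices of $\finfine(S_g)$ and are graph automorphisms of it, they agree on $\finfine(S_g)$ as well, giving $\Phi(f) = \phi$.

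The central obstacle is the combinatorial lemma I need to prove: in the language of $\finfine$-adjacency alone, characterize which adjacent pairs $(\alpha,\beta)$ satisfy $|\alpha \cap \beta| = 0$. Transversely intersecting pairs with only finitely many intersection points are also edges of $\finfine$, so disjointness is not a priori detectable. I would attempt a local configuration argument: disjoint pairs admit ``annular thickenings''---curves $\alpha', \beta'$ co-bounding annuli with $\alpha$ and $\beta$ respectively and fitting into a combinatorial pattern in $\finfine$ (for instance, a specific induced four-cycle with prescribed common neighbors) that fails whenever $\alpha$ and $\beta$ meet, because any genuine intersection point forces a further family of curves to be non-adjacent to the configuration. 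Making such a characterization precise, and in particular ruling it out for tangentially intersecting pairs (the most delicate case, since tangency is often the hardest analogue of disjointness to separate combinatorially), is where the bulk of the technical work lies. Once the characterization is in hand, the remainder of the surjectivity argument is bookkeeping.
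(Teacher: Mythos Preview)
Your overall architecture---show that a distinguished class of edges is combinatorially invariant, then reduce to a known automorphism theorem for a finer graph---is exactly the paper's strategy. The difference is in the target: you aim to characterize the edges with $|\alpha\cap\beta|=0$ and invoke the $\Aut\fine(S_g)$ result, whereas the paper characterizes the edges with $|\alpha\cap\beta|\le 1$ and invokes Booth--Minahan--Shapiro on $\Aut\onefine(S_g)$.

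The paper's characterization is short and worth knowing. First, a curve $w$ lies in $u\cup v$ if and only if $\link(u,v)\subseteq\link(w)$ in $\finfine$: one direction is immediate, and for the other, if $w\not\subset u\cup v$ one uses a free subarc of $w$ together with Corollary~\ref{cor:choosecurve} to build a curve meeting $w$ infinitely often but each of $u,v$ only finitely often. Second, $|u\cap v|\le 1$ if and only if $u\cup v$ contains no essential simple closed curve other than $u$ and $v$: with two or more intersection points one surgers a subarc of $u$ onto a subarc of $v$ to produce a third essential curve, while with at most one intersection no such third curve exists. Together these give a purely link-theoretic description of the $\onefine$-edges inside $\finfine$, so every $\finfine$-automorphism restricts to $\onefine$.

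Your proposal has a real gap exactly where you flag it. The annular-thickening sketch, as stated, does not separate disjoint pairs from pairs touching at a single point: if $\alpha$ and $\beta$ touch once, one can still push $\alpha$ off to $\alpha'$ on the side away from $\beta$ and push $\beta$ off to $\beta'$ on the side away from $\alpha$, obtaining four curves any two of which meet in at most one point---so in $\finfine$ the induced subgraph on $\{\alpha,\alpha',\beta,\beta'\}$ is a $4$-clique, indistinguishable from the genuinely disjoint case. Whatever ``prescribed common neighbors'' you intend would have to carry the entire argument, and you have not supplied them. The paper's move of lumping $0$ and $1$ intersections together is precisely what dissolves this difficulty: both the disjoint and the single-touching configurations satisfy the same ``no third curve in the union'' property, and reducing to $\onefine$ rather than $\fine$ costs nothing since the needed automorphism theorem is already available there.

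Your injectivity argument is fine and standard; the paper gets it for free from exhibiting a two-sided inverse.
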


Along the way, we prove an incredibly useful result that is the hammer behind most of the above theorems: given a finite collection of arcs and curves in a surface, there exists an arc or curve in any isotopy class and arbitrarily close to any representative such that it intersects all curves or arcs in the collection finitely many times each and at crossing intersections (see Section~\ref{sec:kcurve} for definitions). This is the content of Lemma~\ref{lemma:finitelymanyintersections} and Corollary~\ref{cor:choosecurve}.

\p{A brief history of curve graphs} The \textit{curve graph} of a surface $S$ is the graph whose vertices are isotopy classes of essential simple closed curves and whose edges connect vertices that admit disjoint representatives. The curve graph is a classical tool used to study the \textit{mapping class group} of $S,$ which is the group of orientation-preserving homeomorphisms modulo isotopy. In a seminal paper, Masur--Minksy show that the curve graph is Gromov hyperbolic \cite{MM}. The $k$-curve graph, which has the same vertices as the curve graph while edges connect curves that minimally intersect at most $k$ times, is remarked to be hyperbolic by Agrawal--Aougab--Chandran--Loving--Oakley--Shapiro--Xiao \cite{kcurve}.

An object of recent study is the \textit{fine curve graph}, which is defined to be the fine $k$-curve graph with $k=0$ and is notated $\fine(S).$ It was originally introduced by Bowden--Hensel--Webb in the case that vertices are smooth curves to prove that $\Diffeo_0(S_g)$ admits unbounded quasimorphisms (for $g\geq 1$), and, as a corollary, is not uniformly perfect \cite{Bowden_Hensel_Webb_2021}. Fine curve graphs with edges not corresponding to disjointness were introduced in Le~Roux--Wolff \cite{LRW} and Booth--Minahan--Shapiro \cite{BMS}; both papers study automorphism groups of such graphs. Moreover, Booth studies the automorphism group of the fine curve graph whose vertices are $C^\infty$ curves and edges correspond to disjointness \cite{booth1} and also studies homeomorphisms that preserve the set of continuously differentiable curves \cite{booth2}.

\pit{A note on the tameness of curves} Any curve in a surface is tame: at each point, there is a neighborhood of the point where the curve is homeomorphic to one that is locally flat \cite[Theorem A1]{Epstein}. In other words, given a curve $c,$ there is a neighborhood of each point $x\in c$ that is hoemeomorphic to the open unit disk such that the image of $c$ under that homeomorphism is the $x$-axis. This further implies that any two nonseparating curves can be taken to each other by a homeomorphism of the surface.

Despite curves themselves being tame, any two curves can have incredibly complicated intersection patterns. For example, two curves can intersect in at isolated points, in intervals, or in Cantor sets (just to name a few). Moreover, given parameterized unit square $I\times I$ in $S$, there is a way to construct a curve in $S$ such that every arc $\{t\}\times I$ intersects this curve infinitely many times. As such, we will mainly use the fact that the intersection set of two curves is compact. 

\p{Outline} In Section~\ref{sec:kcurve}, we prove Theorem~\ref{maintheoremkfine} using surgery arguments, induction, and quasi-isometry. In Section~\ref{sec:finefinite}, we prove Theorem~\ref{maintheoremfinitefine}. In Section~\ref{sec:contractability}, we prove Theorem~\ref{maintheoremcontractibility}. In Section~\ref{sec:subgraphs}, we prove Theorems~\ref{maintheorem:finite.in.fine}, \ref{maintheorem.countable.subgraphs.in.finitary}, and \ref{maintheorem:forbiddensubgraphs}. Finally, in Section~\ref{sec:aut}, we prove Theorem~\ref{maintheorem:aut}.

\p{Acknowledgements} Thank you to Dan Margalit for his support; to Katherine Booth, Sean Eli, Jonah Gaster, Daniel Minahan, and especially Alex Nolte for many discussions; to Denis Osin for asking the questions that prompted this work; and to Edgar Bering IV, Sarah Koch and Alex Wright for comments on a draft.

\section{Hyperbolicity of fine $k$-curve graphs}\label{sec:kcurve}

The goal of this section is to prove Theorem~\ref{maintheoremkfine}. We will do this by induction on $k$ and show that the fine $k$-curve graph is quasi-isometric to the fine $(k-1)$-curve graph. The base case of the fine $1$-curve graph and the fine curve graph is addressed in Proposition~\ref{prop:onefineqifine}. The inductive step is proven in Proposition~\ref{prop:konefineqikfine}. 

We first introduce some nomenclature for points of $u\cap v$ that are not accumulation points for the set $u\cap v$. Two curves $u$ and $v$ are \textit{touching} at a point $c\in u\cap v$ if, in an arbitrarily small neighborhood $N$ of $c,$ $u$ and $v$ can be isotoped to be disjoint. Otherwise, $u$ and $v$ are \textit{crossing} at $c.$ Examples of such intersections can be found on the right and left, respectively, of Figure~\ref{fig:crossingvstouching}. 

\begin{figure}[h]
\begin{center}
\includegraphics[width=3in]{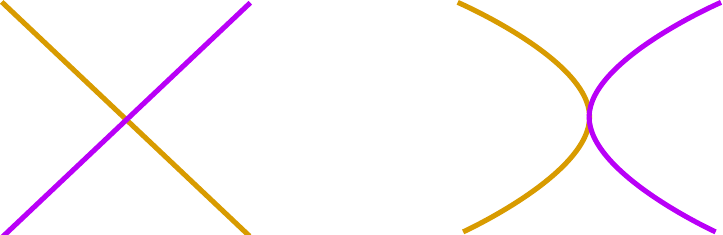}
\caption{Left: a crossing intersection. Right: a touching intersection.}\label{fig:crossingvstouching}
\end{center}
\end{figure}

We now introduce several graph notations. If $\{v_i\}_I$ are vertices of a graph $G$, we denote a (possibly infinite) path in $G$ by $(\ldots, v_{j-1},v_j,v_{j+1},\ldots).$ We endow all graphs with the path metric, wherein the distance between two vertices is the length of a shortest path between them. We further parameterize all edges to have unit speed and length one. When considering quasi-isometries, we can work solely with vertices, as every point on an edge is distance at most $\frac{1}{2}$ from a vertex.

\begin{proposition}[Base Case]\label{prop:onefineqifine}
Let $S_g$ be a closed, orientable surface with $g\geq 2.$ Then, $\fine(S_g)$ is quasi-isometric to $\onefine(S_g)$.
\end{proposition}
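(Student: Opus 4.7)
The plan is to show that the identity map on vertex sets gives a quasi-isometry $\fine(S_g) \to \onefine(S_g)$. Since $\fine(S_g)$ is a subgraph of $\onefine(S_g)$ on the same vertex set, we immediately have $d_\onefine \leq d_\fine$ along with vertex-surjectivity, so all that remains is a reverse coarse bound $d_\fine \leq K\,d_\onefine$. By concatenating paths edge by edge, this reduces to bounding $d_\fine(u,v) \leq K$ whenever $|u\cap v| \leq 1$. The case $|u \cap v| = 0$ is immediate (the two curves are already $\fine$-adjacent), so the heart of the matter is $|u \cap v| = 1$: I aim to produce an essential simple closed curve $w \subset S_g$ disjoint from $u \cup v$, yielding a length-$2$ path $(u, w, v)$ in $\fine(S_g)$ and thus $K = 2$.

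To build $w$, I will take a compact regular neighborhood $N$ of the $1$-complex $u \cup v \cong S^1 \vee S^1$, which is well-defined for tame curves per the introductory discussion. Since $\chi(N) = -1$, the local model at the shared intersection point forces $N$ to be either a once-punctured torus $S_{1,1}$ (crossing case) or a pair of pants $S_{0,3}$ (touching case). I then choose $w$ to be a component of $\partial N$ that is essential in $S_g$; such a component exists by an Euler characteristic count on the complementary subsurface $M = S_g \setminus \interior(N)$, which satisfies $\chi(M) = 3 - 2g \leq -1$ for $g \geq 2$.

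I expect the touching case to be the main obstacle. In the crossing case, $\partial N$ has a single component and the complement $M$ is connected of genus $g - 1 \geq 1$, hence not a disk, so $\partial N$ is automatically essential. In the touching case, $\partial N$ has three components and a priori some of them could bound disks in $M$. However, if two of these boundary circles bounded disks in $M$, then capping them off would produce an embedded disk in $S_g$ containing $u \cup v$, contradicting the essentiality of $u$ and $v$. Therefore at most one of the three boundary circles of $N$ is null-homotopic in $S_g$, and either of the other two serves as the desired $w$, completing the $(2,0)$-quasi-isometry.
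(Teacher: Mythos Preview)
Your proof is correct and follows essentially the same strategy as the paper: both show the identity on vertices is a $(2,0)$-quasi-isometry by producing, for any $u,v$ with $|u\cap v|=1$, an essential curve $w$ disjoint from $u\cup v$. The only difference is packaging---the paper handles the touching case by pushing $u$ off itself to a parallel copy $u'$ and the crossing case by invoking change-of-coordinates to the standard genus-$2$ picture, whereas you treat both cases uniformly via the regular neighborhood $N$ of $u\cup v$ and an Euler-characteristic count on its boundary; these are equivalent arguments.
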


\begin{proof}
Let $\iota:\fine(S_g)\to\onefine(S_g)$ be the natural inclusion map. Let $\df(\cdot,\cdot)$ and $\dof(\cdot,\cdot)$ denote distances in $\fine(S_g)$ and $\onefine(S_g),$ respectively.

We will show that $\iota$ is a quasi-isometry with respect to $\df$ and $\dof.$ We will abuse notation and denote $\iota(u)$ by $u$. Quasi-surjectivity is achieved since $\iota$ is bijective on the vertices and every point on an edge is distance at most $\frac{1}{2}$ from any vertex. Thus it is enough to check the distance conditions required by quasi-isometry by considering vertices.

First, for any two vertices $u$ and $v,$ we have that $\dof(u,v)\leq \df(u,v),$ as the image of any path in $\fine(S_g)$ is a path of the same length in $\onefine(S_g).$  We now claim that $\df(u,v) \leq 2\dof(u,v)$. 

Let $u$ and $v$ be vertices such that $\dof(u,v)=1.$ If $u$ and $v$ are disjoint, we are done. If $u$ and $v$ are touching, isotope $u$ off of itself and away from $v$ to make $u'.$ We then have that $(u,u',v)$ is a path in $\fine(S_g),$ so $\df(u,v)=2\leq 2\dof(u,v).$ If $u$ and $v$ are crossing, then up to homeomorphism, $u$ and $v$ are the curves pictured in the left of Figure~\ref{fig:genustwowithcurves}. There is therefore a curve $w$ in the subsurface not filled by $u$ and $v.$ An example of such a curve can be seen on the right of Figure~\ref{fig:genustwowithcurves}.

\begin{figure}[h]
\begin{center}
\includegraphics[width=3in]{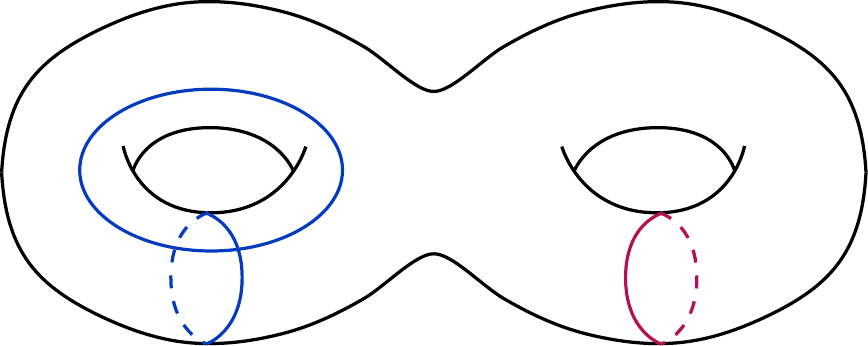}
\caption{Any pair of crossing curves in $\onefine(S_g)$ is, up to homeomorphism of $S_g$, equivalent to the two blue curves on the left. We may find a curve disjoint from both (such as the red curve on the right) outside the torus they fill.}\label{fig:genustwowithcurves}
\end{center}
\end{figure}

We therefore conclude that, for all vertices $u$ and $v$, we have $\frac{1}{2}\df(u,v)\leq\dof(\iota(u),\iota(v))\leq 2\df(u,v)$.
\end{proof}

With that in mind, we will now prove the inductive step.

\begin{proposition}[Inductive Case]\label{prop:konefineqikfine}
Let $S_g$ be a closed, orientable surface with $g\geq 2.$ Then $\kfine(S_g)$ is quasi-isometric to $\kpofine(S_g)$.
\end{proposition}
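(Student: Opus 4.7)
The plan is to adapt the template of Proposition~\ref{prop:onefineqifine}. The natural inclusion $\iota\colon \kfine(S_g)\to \kpofine(S_g)$ is bijective on vertices and distance non-increasing, so $\dkof(u,v)\le \dkf(u,v)$, and it suffices to bound $\dkf(u,v)$ by a constant whenever $|u\cap v|\le k+1$. The nontrivial case is $|u\cap v|=k+1$, and I will exhibit a ``bridge curve'' $w$ with $|w\cap u|\le k$ and $|w\cap v|\le k$, giving a path $(u,w,v)$ of length two in $\kfine(S_g)$.

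If at least one intersection of $u$ and $v$ is a touching, a parallel pushoff $w$ of $u$ on the side of $u$ opposite $v$ at the touching is disjoint from $u$ and has at most $k$ intersections with $v$: the touching is eliminated, while each of the remaining $k$ crossings contributes exactly one intersection after pushoff. Otherwise all $k+1$ intersections are crossings; label them $p_1,\ldots,p_{k+1}$ in cyclic order along $u$, and let $\alpha$ be the arc of $u$ from $p_1$ to $p_2$ containing no other $p_i$. Between the two arcs of $v$ from $p_1$ to $p_2$, by pigeonhole one --- call it $\beta$ --- contains at most $\lfloor(k-1)/2\rfloor$ of the remaining $p_i$. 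Construct $w$ by pushing $\alpha$ slightly off $u$ to $\alpha'$, pushing $\beta$ slightly off $v$ to $\beta'$, and joining the endpoints near $p_1$ and $p_2$ by short connectors, using Lemma~\ref{lemma:finitelymanyintersections} to guarantee finitely many crossing intersections throughout. A case analysis of the four quadrants of the crossings at $p_1$ and $p_2$ shows that pushoff directions and connector placements can be coordinated so that $|w\cap u|\le\lfloor(k-1)/2\rfloor+1\le k$ and $|w\cap v|\le 1\le k$, where the $\lfloor(k-1)/2\rfloor$ intersections come from $\beta'$ meeting $u$ near each interior $p_i$ on $\beta$, and the connectors contribute at most one crossing with each of $u$ and $v$ in total.

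If $\alpha\cup\beta$ happens to bound a disk in $S_g$, then $w$ is nullhomotopic and this surgery fails; the disk is a bigon between $u$ and $v$, and I handle this case by performing bigon reduction on $u$ to obtain an isotopic curve $u'$ with $|u'\cap v|=k-1$, then taking a parallel pushoff of $u'$ supported near the bigon region to be disjoint from $u$ while preserving $|u'\cap v|=k-1$. The main obstacle I anticipate is the coordination of the global pushoff directions for $\alpha'$ and $\beta'$ with the local quadrant placements at $p_1$ and $p_2$: since these pushoff sides are global choices, the quadrants at both endpoints are forced by local orientation data, and one must verify that the connector crossings fit within the claimed bounds across all orientation configurations.
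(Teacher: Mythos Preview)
Your overall strategy---showing that vertices adjacent in $\kpofine(S_g)$ lie at bounded $\dkf$-distance---matches the paper's, and your surgery in the all-crossings case is essentially the paper's Case~3 (where it simply cites Masur--Minsky).

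However, your touching case has a genuine gap. You claim a global parallel pushoff $w$ of $u$ to the side opposite the chosen touching satisfies $|w\cap v|\le k$, since ``the touching is eliminated, while each of the remaining $k$ crossings contributes exactly one.'' But the remaining $k$ intersections need not be crossings. If one of them is a touching on the \emph{same} side as the pushoff, then near that point $v$ approaches $u$ and recedes on that side, forcing $w$ to meet $v$ twice there. Concretely, if among the $k+1$ intersections there are $t_A$ touchings on the pushoff side, $t_B$ on the opposite side (with $t_B\ge 1$ by your choice), and $c$ crossings, then $|w\cap v|=c+2t_A$, which exceeds $k=c+t_A+t_B-1$ whenever $t_A\ge t_B$. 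So a single-side pushoff does not in general give a length-two path.

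Your bigon handling has a related problem: after bigon reduction $u'$ agrees with $u$ outside a small disk, so $|u\cap u'|=\infty$; the phrase ``parallel pushoff of $u'$ supported near the bigon region to be disjoint from $u$'' is unclear, since a local modification cannot kill the infinite coincidence set, while a global pushoff of $u'$ may well meet $u$ near the bigon and in any case has uncontrolled intersection with $v$ (the same $t_A$ issue reappears). Also, in your surgery case, $\alpha\cup\beta$ can bound a disk even when $\beta$ contains interior intersection points, so it is not literally a bigon between $u$ and $v$; you cannot immediately invoke bigon reduction there.

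The paper sidesteps all of this by aiming for $\dkf\le 3$ rather than $2$ in the touching and bigon cases: it isotopes $u$ to $u'$ with $|u'\cap v|\le k$, then takes $u''$ to be an essential boundary component of a regular neighborhood of $u\cup u'$, giving the path $(u,u'',u',v)$ without ever needing to bound $|u''\cap v|$. If you want to keep the length-two target, you would need a different bridge curve in these cases (e.g., one found in a complementary subsurface, or obtained after resolving all touchings simultaneously); the single global pushoff is not enough.
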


\begin{proof}
Let $\iota:\kfine(S_g)\to\kpofine(S_g)$ be the natural inclusion map. Let $\dkf(\cdot,\cdot)$ and $\dkof(\cdot,\cdot)$ denote distances in $\kfine(S_g)$ and $\kpofine(S_g)$ respectively. 

We will show that $\iota$ is a quasi-isometry with respect to $\dkf$ and $\dkof.$ We will abuse notation and denote $\iota(u)$ by $u$. Quasi-surjectivity follows as in the base case. It is enough to check the quasi-isometric embedding condition on vertices. 

First, for any two vertices $u$ and $v,$ we have that $\dkof(u,v)\leq \dkf(u,v),$ as the image of any path in $\kfine(S_g)$ is a path in $\kpofine(S_g).$ It remains to show that for any vertices $u$ and $v$, $\dkf(u,v)\leq 3\cdot \dkof(u,v).$

Suppose $u_i$ and $u_{i+1}$ are consecutive vertices along a geodesic path in $\kpofine(S_g)$ that intersect $k+1$ times. We claim that $d_k(u_i,u_{i+1})\leq 3.$ We prove this via casework.

\p{Case 1: Touching intersection} Suppose $u_i$ and $u_{i+1}$ have a touching intersection. Isotope $u_i$ in a neighborhood of this intersection to create $u'_i.$ We then have that $|u'_i\cap u_{i+1}|=k,$ so $u'_i$ and $u_{i+1}$ are adjacent in $\kfine(S_g).$ It remains to find a length 2 path between $u_i$ and $u'_i.$ 
    
Consider a regular neighborhood of $u_i\cup u'_i.$ Take $u''_i$ to be a boundary component of this neighborhood that is essential in $S_g.$ Thus, $u''_i$ is disjoint from $u_i$ and $u'_i$ and is therefore adjacent to both.

We conclude that $(u_i,u''_i,u'_i,u_{i+1})$ is a path in $\kfine(S_g).$

\p{Case 2: Bigons} The proof of this case is similar to that of Case 1. The only difference is that we obtain $u'_i$ by isotoping $u_i$ to remove an innermost bigon with $u_{i+1}$ such that the isotopy is done in a neighborhood of the bigon. We therefore have that $|u'_i\cap u_{i+1}|=k-1,$ so $u'_i$ is adjacent to $u_{i+1}$ in $\kfine(S_g).$ We obtain $u''_i$ analogously to Case 1. 

\p{Case 3: Essential intersections} We may now assume that $u_i$ and $u_{i+1}$ are in minimal position, as otherwise we could apply our work from cases 1 or 2. Thus, may construct a path of length two between $u_i$ and $u_{i+1}$ in $\mathcal{C}_k^\dagger(S_g)$ using conventional surgery techniques, such as those used for finding paths in the curve graph $\mathcal{C}(S)$ (see for example Masur--Minsky \cite[Lemma 2.1]{MM}).
\end{proof}

We are now ready to prove Theorem~\ref{maintheoremkfine}.

\begin{proof}[Proof of Theorem~\ref{maintheoremkfine}]
We proceed by induction. The fine curve graph is Gromov hyperbolic by Remark 3.2 and Theorem 3.8 of Bowden--Hensel--Webb \cite{Bowden_Hensel_Webb_2021}. By Proposition~\ref{prop:onefineqifine}, the fine curve graph is quasi-isometric to the fine 1-curve graph, so we have that the fine 1-curve graph is hyperbolic.

Suppose now that the $k$-curve graph is hyperbolic. By Proposition~\ref{prop:konefineqikfine}, the $(k+1)$-curve graph is quasi-isometric to the fine $k$-curve graph and is therefore hyperbolic, as desired.
\end{proof}

\section{Diameter of finitary curve graphs}\label{sec:finefinite}

In this section, we prove that the finitary curve graph has diameter 2, which is the content of Theorem~\ref{maintheoremfinitefine}. This implies that the finitary curve graph is quasi-isometric to a point (and therefore trivially hyperbolic). Along the way, we prove Proposition~\ref{prop:lotsofcurves} about the existence of a curve (or arc) that intersects a collection of other curves (or arcs) finitely many times each.

\p{Curves crossing an annulus} Let $\nu$ be an embedded annulus in $S_g$ bounded by two disjoint homotopic curves $v$ and $v'$. A curve $u$ \textit{crosses $\nu$ at $u_i$} if there exists a closed connected subset $u_i$ of $u$ such that $u\cap \nu$ is a single connected component and $|u_i\cap v| = |u_i\cap v'|=1.$ Suppose $u$ is the image of a map $f_u:[0,1]\to S_g$ that is an embedding on $(0,1)$ and $f(0)=f(1)$. If $u_i\cap \nu$ is the image of $[t_1,t_2]$, we call $f(t_1)$ the \textit{starting point} and $f(t_2)$ the \textit{endpoint}. Otherwise, if $u_i\cap \nu$ is the image of $[0,t_1]\cup [t_2,1]$, then $t_2$ is the starting point and $t_1$ is the endpoint.

Furthermore, $u$ \textit{crosses} $\nu$ $n$ \textit{times} if there exist $n$ (but not $n+1$) closed connected subsets $u_i$ of $u$, pairwise disjoint except potentially at their boundaries, such that $u$ crosses $\nu$ at $u_i$ for each $i.$ Examples of crossings can be seen in Figure~\ref{fig:loopsandcrossings}. We note that, a priori, $n$ can be infinity. The following lemma rules out this possibility. 

\begin{figure}[h]
\begin{center}
\includegraphics[width=4in]{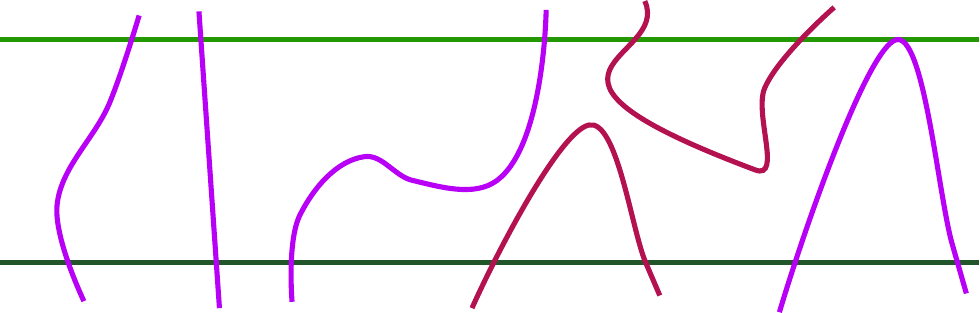}
\caption{We have an example of a curve $u$ that crosses the annulus bounded by green curves $v$ and $v'$ five times and forms two loops. The crossing strands are purple while the loops are burgundy.}\label{fig:loopsandcrossings}
\end{center}
\end{figure}

\begin{lemma}\label{lemma:finitecrossings}
Let $\nu$ be an embedded annulus in $S_g$ bounded by two homotopic curves $v$ and $v'$. Then, an embedded curve $u$ crosses $\nu$ finitely many times.
\end{lemma}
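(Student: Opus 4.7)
The plan is to argue by contradiction using the compactness of $S^1$ and of the boundary curves $v$ and $v'$. First I parameterize $u$ as a continuous injection $f \colon S^1 \to S_g$. Each crossing of $\nu$ by $u$ corresponds to a closed sub-arc $I_i = [a_i, b_i] \subset S^1$ with $\{f(a_i), f(b_i)\}$ consisting of one point on $v$ and one on $v'$. These sub-arcs can be taken to have pairwise disjoint interiors, since the definition of crossings only allows coincidence at boundaries of the $u_i$, and if two crossings share a boundary point in $u$ then, because $u$ is embedded, they must lie on opposite sides of that point in $u$.

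Now suppose for contradiction that $u$ crosses $\nu$ infinitely many times, giving an infinite family $\{I_i\}_{i \in \mathbb{N}}$ of closed sub-arcs of $S^1$ with pairwise disjoint interiors. Since the total length of $S^1$ is finite and the interiors of the $I_i$ are disjoint, the parameter lengths $|I_i|$ must tend to $0$ along some subsequence. Passing to a further subsequence, I may assume $f(a_i) \in v$ and $f(b_i) \in v'$ for all $i$. By compactness of $v$, I can pass to yet another subsequence on which $f(a_i) \to p$ for some $p \in v$.

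Since $S^1$ is compact, $f$ is uniformly continuous, so $|I_i| \to 0$ forces $d(f(a_i), f(b_i)) \to 0$, and hence $f(b_i) \to p$ as well. But $f(b_i) \in v'$ for every $i$ and $v'$ is closed, so $p \in v'$, contradicting the fact that $v$ and $v'$ are disjoint (which follows from their being the two boundary components of the embedded annulus $\nu$).

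The main (minor) obstacle is setting up the argument so that the sub-arcs $I_i$ really can be taken to have pairwise disjoint interiors in $S^1$, because the definition of crossings in the text allows coincidences at boundaries of the $u_i$; once this is handled, the compactness argument is essentially immediate.
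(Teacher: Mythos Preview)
Your proof is correct and shares the same endgame as the paper's: produce a point in $v \cap v'$, contradicting that the two boundary components of $\nu$ are disjoint. The mechanism differs, though. The paper first extracts a convergent subsequence of endpoints $p_n \in u \cap v'$ and then, via a somewhat delicate interleaving argument (arranging the $p_n$ sequentially along $u$ and tracking the corresponding starting points $q_n \in v$ so that their preimages form a monotone bounded sequence), shows that $q_n \to p$ as well. You shortcut this by observing that infinitely many sub-arcs of $S^1$ with pairwise disjoint interiors must have lengths tending to zero along a subsequence, whence uniform continuity of $f$ (automatic from compactness of $S^1$) immediately gives $d(f(a_i), f(b_i)) \to 0$. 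Your route avoids the ordering bookkeeping entirely and is cleaner; the only extra ingredient is an auxiliary metric on $S_g$, which is harmless since closed surfaces are metrizable. The ``minor obstacle'' you flag about disjoint interiors is indeed minor: the paper's definition of ``$u$ crosses $\nu$ $n$ times'' already stipulates that the $u_i$ are pairwise disjoint except possibly at their boundaries, and each $u_i$ is forced to be the closure of a connected component of $u\setminus(v\cup v')$, so an infinite family with disjoint interiors genuinely exists whenever the number of crossings is unbounded.
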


\begin{proof}
    Suppose $u$ crosses $\nu$ infinitely many times. By compactness, there must be a sequence $\{p_n\}$ of points in $u\cap v'$ that converges to some $p\in u\cap v'$. There exists a subsequence $\{p_{n_k}\}$ such that $p_{n_1},p_{n_2},\ldots,p_{n_k},\ldots,p$ appear sequentially along both $u$ and $v'.$ Fix a parametrization $f_u$ of $u$ such that $f_u^{-1}(p)=\{0,1\}$. We may again take a subsequence such that, up to reparametrization, each $p_i$ is the endpoint of some $u_i.$
    
    Abusing notation, we will call this subsequence $\{p_n\}.$ Let $q_i$ be the starting point of the $u_i$ for which $p_i$ is an ending point. Thus, we have that $q_1,p_1,q_2,p_2,\ldots$ appear sequentially along $u.$ Define $t_i = f_u^{-1}(p_i)$, $t=f_u^{-1}(p),$ and $s_i=f_u^{-1}(q_i).$ Up to taking a subsequence, we then have that $s_1<t_1<s_2<t_2<\cdots<s_i<t_i<\cdots$ is a bounded strictly increasing real sequence that has a subsequence that converges to $t.$ We then have that the entire sequence approaches $t$, so by continuity, $q_i=f_u(s_i)\to p.$ However, $\{q_i\}\subset v$, and there is a neighborhood of $v'$ disjoint from $v$, making this convergence a contradiction. 

    We conclude that $u$ cannot cross $v$ infinitely many times.
\end{proof}

A \textit{loop of $u$ in $\nu$} is a connected, closed subset of $u$ both of whose boundary points are in one of $v$ or $v'$ and whose interior is disjoint from both $v$ and $v'.$ In other words, a loop of $u$ is a portion of $u$ that bounds a bigon with $v$ or $v'.$ Examples of loops can be seen in Figure~\ref{fig:loopsandcrossings}.

We now prove the following lemma, a direct consequence of Lemma~\ref{lemma:finitecrossings}.

\begin{lemma}\label{lemma:finitefarloops}
    Let $\nu$ be an embedded annulus in $S_g$ bounded by two homotopic curves $v$ and $v'$ and let $u$ be a curve in $S_g.$ Then, finitely many loops of $u$ intersect any curve $v''$ contained in the interior of $\nu.$
\end{lemma}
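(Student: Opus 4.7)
My plan is to deduce the lemma from Lemma~\ref{lemma:finitecrossings} by extracting, from each loop of $u$ meeting $v''$, a sub-arc that is a crossing of one of the two sub-annuli cut out by $v''$. I would first handle the case that $v''$ is essential in $\nu$, which is the substantive one; if $v''$ is inessential (bounds a disk in $\nu$), a nearly identical compactness argument to that in Lemma~\ref{lemma:finitecrossings} should go through. In the essential case, $v''$ separates $\nu$ into two sub-annuli $\nu_1$, bounded by $v$ and $v''$, and $\nu_2$, bounded by $v''$ and $v'$. By Lemma~\ref{lemma:finitecrossings}, $u$ crosses each of these sub-annuli only finitely many times, and my goal will be to dominate the number of loops meeting $v''$ by this total.

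The key step will be to show that if $L$ is a loop of $u$ in $\nu$ with both endpoints on $v$ and $L \cap v'' \neq \emptyset$, then $L$ contains a sub-arc which is a crossing of $\nu_1$. To extract it, I would parameterize $L \colon [0,1] \to S_g$ with $L(0), L(1) \in v$, set $t_\ast = \min\{t \in [0,1] : L(t) \in v''\}$ (which exists since $L \cap v''$ is nonempty and compact), and let $L' = L|_{[0, t_\ast]}$. Then $L' \cap v = \{L(0)\}$ (since the interior of the loop avoids $v$), $L' \cap v'' = \{L(t_\ast)\}$ (by minimality of $t_\ast$), and the interior of $L'$ is disjoint from $v \cup v' \cup v''$. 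This interior is connected, and some short initial segment of $L'$ enters $\nu_1^\circ$ from $L(0) \in v \subset \partial \nu_1$, so the whole interior lies in $\nu_1^\circ$; thus $L' \subset \nu_1$ is a crossing of $\nu_1$. A symmetric argument extracts a crossing of $\nu_2$ from each loop with endpoints on $v'$ meeting $v''$.

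Distinct loops of $u$ have pairwise disjoint interiors as subsets of $u$, so the extracted crossings are pairwise disjoint except possibly at their endpoints on $v \cup v'$, which is exactly what is needed to match the definition of ``crosses $\nu_i$ $n$ times''. Since these counts are finite by Lemma~\ref{lemma:finitecrossings}, only finitely many loops can intersect $v''$, as desired.

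The main obstacle I expect is carefully verifying that $L'$ stays inside $\nu_1$ rather than wandering into $\nu_2$ before first touching $v''$. This will rest on combining the connectedness of the interior of $L'$ with the essentiality of $v''$, which ensures that $\nu \setminus v''$ decomposes into the two open sub-annuli $\nu_1^\circ$ and $\nu_2^\circ$.
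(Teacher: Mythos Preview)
Your proposal is correct and follows essentially the same approach as the paper: split the loops according to which boundary curve their endpoints lie on, and for each such loop meeting $v''$ extract a crossing of the corresponding sub-annulus, so that finiteness follows from Lemma~\ref{lemma:finitecrossings}. Your write-up is more explicit than the paper's---you spell out how to pick the crossing sub-arc via the first hitting time $t_\ast$ and why it lies in the correct sub-annulus---whereas the paper compresses this into a single sentence; you also flag the inessential-$v''$ case, which the paper tacitly ignores since only essential $v''$ arise in its application.
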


\begin{proof}
    We first consider loops that intersect $v.$ Applying Lemma~\ref{lemma:finitecrossings} to the annulus bounded by $v$ and $v'',$ we obtain that finitely many loops cross this annulus. Therefore, finitely many loops that intersect $v$ also intersect $v''.$

    We repeat the above procedure with loops that intersect $v'$, thus considering all loops of $u$ in $\nu$ that intersect $v''.$
\end{proof}

We note that Lemmas~\ref{lemma:finitecrossings} and \ref{lemma:finitefarloops} also work if $\nu$ is a strip (a disk parameterized by $I\times I$ where $I\times \{0\},I\times \{1\}\subset \partial S$ and $v=\{0\}\times I$ and $v'=\{1\}\times I$) and $v$ and $v'$ are arcs that are homotopic (not rel boundary).

\p{Proof of Theorem~\ref{maintheoremfinitefine}} The key point of the proof of Theorem~\ref{maintheoremfinitefine} is contained in Lemma~\ref{lemma:crossingintonly}. Let $a$ be a curve in $S$ and $\alpha\subset a$ a closed, connected subarc with nonempty interior. Define a \textit{banana neighborhood} of $\alpha$ to be a disk $B$ such that the interior of $\alpha$ is contained in the interior of $B$,  $\alpha\cap \partial B = \partial \alpha,$ and $a\cap \overline{B}=\alpha.$
One way to construct such a B is to take a union of 1) $\bigcup U_x$ of simply connected neighborhoods $U_x$ for $x\in \alpha$ such that $a$ is locally flat in $U_x$ and $a \cap U_x \subset \alpha$ and 2) all disks bounded by $\overline{\bigcup U_x}.$ 

\begin{lemma}\label{lemma:crossingintonly}
    Let $S$ be any orientable surface. Let $\nu$ be an embedded annulus in $S$ (closed as a subset of $S$) and $\gamma_1,\ldots,\gamma_n$ a finite collection of curves and/or arcs in $S$ such that $\cup \gamma_i \cap \nu$ is a collection of arcs in $\nu$ disjoint from each other except potentially: 1) in $\partial \nu$ or 2) the $\gamma_i$ may coincide for the entirety of a connected component of $\nu\cap \bigcup \gamma_i$. Then, there is a curve $w\subset \nu$ such that $|w\cap \bigcup \gamma_i|<\infty.$ In particular, we can choose $w$ such that all intersections of $w$ with each $\gamma_i$ are crossing.

    The same can be done with $\nu$ a strip (disk) parameterized by $I\times I$ where $I\times \{0\},I\times \{1\}\subset \partial S$ and $v=\{0\}\times I$ and $v'=\{1\}\times I.$ 

    We note that if $\partial \nu$ (if $\nu$ is an annulus) or $\{0\}\times I$ (if $\nu$ is a strip) is essential in $S,$ then the constructed $w$ is essential in $S.$
\end{lemma}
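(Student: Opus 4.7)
The plan is to parameterize $\nu$ as $S^1 \times [0,1]$ (with the analogous $I \times I$ parameterization in the strip case), so that $v = S^1 \times \{0\}$ and $v' = S^1 \times \{1\}$. Each connected component of $\bigcup \gamma_i \cap \nu$ is then either a \emph{crossing arc} (endpoints on both $v$ and $v'$) or a \emph{loop} (both endpoints on the same boundary, bounding a bigon with it). Applying Lemma~\ref{lemma:finitecrossings} to each of the finitely many $\gamma_i$ reduces the crossing arcs to a finite collection $\alpha_1, \ldots, \alpha_m$.

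Fix a tentative core curve $A = S^1 \times \{1/2\}$. By Lemma~\ref{lemma:finitefarloops} applied to each $\gamma_i$, only finitely many loops meet $A$; combined with the $\alpha_i$'s, these form a finite collection $\mathcal{A}$ of arcs. The key observation is that the collection of arcs of $\bigcup\gamma_i\cap\nu$ outside $\mathcal{A}$ is uniformly separated from $A$ and from every $\alpha\in\mathcal{A}$: any sequence of such arcs accumulating on a point $q\in A\cup\bigcup\mathcal{A}$ would force $q\in\gamma_i$ for some $i$, but local flatness of $\gamma_i$ at $q$ (that is, tameness of curves, as discussed in the introduction) produces a neighborhood $U$ of $q$ on which $\gamma_i\cap U$ is a single arc, contradicting the presence of a distinct component of $\gamma_i\cap\nu$ in $U$; accumulation by loops of a different $\gamma_j$ is ruled out similarly using the disjointness hypothesis (and, in the coincidence case, local flatness of $\gamma_j$). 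So there is a thin tubular neighborhood $N\subset\nu$ of $A$ disjoint from every arc of $\bigcup\gamma_i\cap\nu$ outside $\mathcal{A}$.

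Now I would construct $w$ inside $N$. Choose pairwise disjoint closed topological-disk neighborhoods $U_\alpha\subset N$ of each $\alpha\in\mathcal{A}$ (meeting $\partial\nu$ exactly where $\alpha$ does), small enough to be disjoint from each other; identify each $U_{\alpha_i}$ for a crossing arc with $[0,1]\times[0,1]$ so that $\alpha_i=[0,1]\times\{1/2\}$ is the horizontal midline. Inside each $U_{\alpha_i}$, route $w$ as a short ``vertical'' arc $\{1/2\}\times[0,1]$, crossing $\alpha_i$ once transversely. For each loop $\beta\in\mathcal{A}$, route $w$ around $U_\beta$ (not through it), using that $\beta$'s bigon is a disk. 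Between the $U_\alpha$'s, connect these pieces by arcs contained in $N\setminus\bigcup_\alpha U_\alpha$, which is disjoint from $\bigcup\gamma_i$ entirely. Concatenating these pieces yields an embedded simple closed curve $w$ isotopic to $\partial\nu$ in $\nu$, with $w\cap\bigcup\gamma_i$ consisting of the $m$ transverse crossings inside the $U_{\alpha_i}$'s. Isotopy to $\partial\nu$ in $\nu$ promotes essentiality of $\partial\nu$ in $S$ to $w$.

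The main obstacle will be the last construction step: showing that the complement $N\setminus\bigcup_\alpha U_\alpha$ contains arcs suitable to glue with the vertical arcs inside each $U_{\alpha_i}$ and the skirting arcs around each $U_\beta$ into a single embedded circle. The positive-distance observation combined with local flatness of the arcs in $\mathcal{A}$ supplies the needed separation to make sense of the thin tube $N$ and of mutually disjoint small neighborhoods $U_\alpha$, but care is needed to verify that the induced complementary region retains the annular topology that allows concatenation into a core curve, and that the skirting arcs around the finitely many $U_\beta$ can be chosen compatibly with the vertical arcs at each $U_{\alpha_i}$.
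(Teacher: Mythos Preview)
Your overall strategy matches the paper's: invoke Lemmas~\ref{lemma:finitecrossings} and~\ref{lemma:finitefarloops} to reduce to a finite collection $\mathcal{A}$ of arcs, then build $w$ to meet only those, each in a single crossing. Your local-flatness argument that the loops outside $\mathcal{A}$ are uniformly bounded away from $A$ is correct and does yield the tube $N$. The divergence is in the construction step. The paper never introduces a global tube; instead it starts from an arbitrary interior core curve $v''$ and performs sequential \emph{surgeries}, one arc of $\mathcal{A}$ at a time. For each crossing strand $u_i$ it takes a banana neighborhood $B_1$ of $u_i$, locates the first and last intersections $x_F,x_L$ of $v''$ with $u_i$, and replaces the portion of $v''$ between them by an arc of the boundary of a slightly smaller banana $B_2$, leaving two intersections with $u_i$ of which one is touching and is then isotoped away; for each loop it surgers $v''$ along the boundary of the loop's bigon. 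Each surgery is confined to a banana of a single arc and therefore creates no new intersections with the others, so after finitely many steps $v''$ has become the desired $w$.

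Your direct construction has an internal inconsistency: you require $U_\alpha\subset N$ while also requiring $U_\alpha$ to meet $\partial\nu$ where $\alpha$ does, but $N$ is a tube about $A$ in $\interior\nu$ and cannot reach $\partial\nu$. If you resolve this by taking the $U_\alpha$ to be banana neighborhoods of $\alpha$ in $\nu$ (not confined to $N$), the obstacle you flag becomes genuine: each $\alpha\in\mathcal{A}$ may meet $\partial N$ in infinitely many points (two topological curves can intersect in a Cantor set), so $U_\alpha\cap N$ can have infinitely many components and the topology of $N\setminus\bigcup_\alpha U_\alpha$ is uncontrolled. Choosing $\partial N$ to have finite intersection with the arcs in $\mathcal{A}$ is essentially the statement you are proving, so this route is circular as written. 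The paper's one-arc-at-a-time surgery sidesteps all of this, never needing to understand how the arcs of $\mathcal{A}$ sit relative to any fixed auxiliary tube.
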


\begin{proof}
    Let $v$ and $v'$ be the boundary curves of $\nu$ if $\nu$ is an annulus (or $v=\{0\}\times I$ and $v'=\{1\}\times I$ if $\nu$ is a strip). If either of $v$ or $v'$ satisfy the conditions on $w,$ we are done. Suppose neither is a suitable $w.$ For the remainder of the proof, we will use the language of curves an annuli, but an identical argument works for strips and arcs.
    
    For simplicity of notation, let $u=\bigcup \gamma_i.$ Since all $\gamma_i$ are disjoint or coincide in the interior of $\nu,$ we may continue to use the terms ``strand" and ``loop" of $u$ in $\nu,$ which will apply to the relevant $\gamma_i$.

    %Let $N_v$ and $N_{v'}$ be regular neighborhoods of $v$ and $v'$ with disjoint closures. Let $v''$ and $v'''$ be the boundary curves of $N_v$ and $N_{v'}$, respectively, that lie in $\nu$. Let $\nu'$ be the annulus bounded by $v''$ and $v'''$. Orient $v,v',v'',$ and $v'''$ such that they are all compatible with a predetermined orientation of $\nu$.
    Let $v''$ be a curve in the interior of $\nu$ and orient $v,\ v',$ and $v''$ in a compatible way with some pre-set orientation of $\nu.$

    We hope to construct a curve $w$ by following $v''.$ However, it may be the case that $|v''\cap u|=\infty.$ We must take this into consideration. First we will adjust $v''$ to ensure that $v''$ intersects all crossing strands of $u$ finitely many times. We will then adjust $v''$ to intersect each loop of $u$ in $\nu'$ finitely many times.

    \p{Adjusting $\mathbf{v''}$ around crossing strands}  Let $u_1,\ldots,u_n$ be the strands of $u$ that cross $\nu.$ We note that each $u_i$ is completely contained in some number of $\gamma_i.$ There are finitely many by Lemma~\ref{lemma:finitecrossings}. We will now amend $v''$ so that it intersects $\cup u_i$ at finitely many points. We do this by working with one strand at a time, beginning with $u_1.$

   \begin{figure}[h]
\begin{center}
\includegraphics[width=4in]{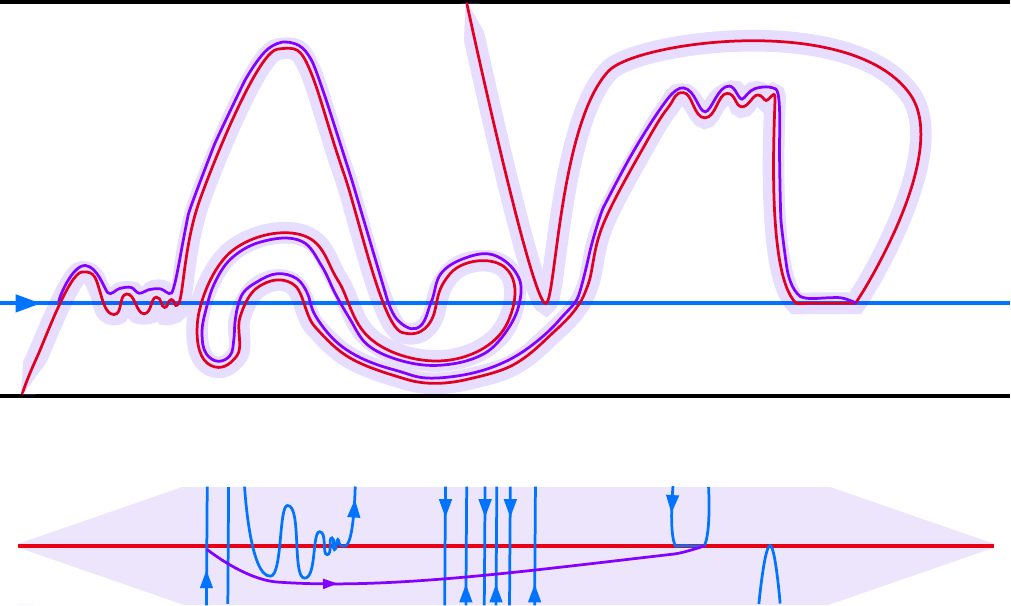}
\caption{Top: the horizontal lines are, top to bottom, $v',\ v'',$ and $v.$ Pictured as well are $u_1\subset u$ (red) and a banana neighborhood $B$ of $u_1$ that is disjoint from $v\cup v'$ except at the endpoints of $u_1.$ Bottom: the banana neighborhood $B$ of $u_1\subset u$. To ensure that we have a curve that intersects $u_1$ finitely many times, we surger $v''$ with the purple arcs, preserving the pictured orientation. In this image, we have not yet completed the final isotopy to remove the touching intersection.}\label{fig:crossingproof}
\end{center}
\end{figure}

   Let $x\in v''$ be such that $x$ is not contained in the closure of any bigon of $u_1$ with $v''.$ Thus, without loss of generality, we are in the case pictured in the top of Figure~\ref{fig:crossingproof} and $u_1$ is contained in a disk (pictured as a rectangle) in $\nu.$ We may parameterize $v''$ such that $v''(0)=x.$ Thus we have a total ordering on $v''\cap u_1$, where $v''(t_1)\leq v''(t_2)$ if and only if $t_1\leq t_2.$ Since $v''\cap u_2$ is compact, we have that there is a first and a last intersection of $v''$ with $u_1$; call these $x_F$ and $x_L$ (``first" and ``last").

    Let $B_1$ be a banana neighborhood of $u_1$ (disjoint from any $\gamma_i$ that are disjoint from $u_1$) and let $B_2\subset B_1$ be a banana neighborhood of the subset of $u_1$ between $x_F$ and $x_L$.   

    Beginning at $x,$ follow $v''$ in the direction of its orientation, and upon touching $x_F,$ begin to follow $\partial B_2.$ Upon the intersection with $x_L,$ again begin following $v''$ in the direction of its orientation until $x$ is reached once more. 

    The resulting curve or arc---which, abusing notation, we denote $v''$---is essential if $v$ was essential, as it is isotopic to $v$. Moreover, because $v''$ was adjusted only in a banana neighborhood of $u_1$ (and otherwise potentially completely removed), when this procedure is iterated for the other $u_i$, it does not create additional intersections with $u.$

    After the surgery, $|v''\cap u_1|=2$. However, as $u_1$ is a crossing strand in $\nu$, $v''$ must cross $u_1$ an odd number of times to be essential in $\nu.$ Thus, one of the intersections of $v''$ with $u_1$ must be touching. Isotope $u''$ in $B_1$ to get rid of this touching intersection. 

   Since there are finitely many strands $u_i$ of $u$ crossing $\nu,$ we have that $|v''\cap\bigcup u_i|<\infty$, as desired.

 \p{Adjusting $\mathbf{v''}$ around loops of $\mathbf{u}$ in $\mathbf{\nu}$} 
By Lemma~\ref{lemma:finitefarloops}, there are finitely many loops of $u$ in $\nu$ that intersect $v''.$ We may order them in any way and we will work with one of them at a time.

Every loop of $u$ in $\nu$ that intersects $\nu'$ has a banana neighborhood $B$ in $\nu$ disjoint from $v\cup v'\cup (u\setminus \text{loop})$ except potentially at the endpoints of the loop. Let $b\subset \partial B$ be the minimal subset of $\partial B$ that bounds the bigon with $v$ or $v'$ that contains the loop of $u.$ Orient $b$ such that if we flow a point outside the banana neighborhood along $v$ or $v'$, we could divert the flow to $b$.

Then $b$ has a first and last point of intersection with $v''$ in accordance with the orientation of $v''$. (To make the ideas of ``first" and ``last" precise, we may again parametrize $v''$ so that the image of $0$ is not contained in the bigon bounded by $b.$) We then surger $v''$ with $b$ and, abusing notation, call the resulting curve $v''$.

We do this finitely many times---at most once for each loop of $u$ in $\nu$ that intersects $v''$---and pick up no intersections with loops of $u$ in $\nu$ as a result. We note that depending on the ordering of the loops, the number of future loops may reduce by more than 1 after surgeries. Moreover, intersections with crossing strands of $u$ in $\nu$ are unaffected since all surgeries are done away from these strands.

\medskip\noindent After the above surgeries, we let $w=v''.$ We have that $w$ is disjoint from $v$ and intersects $u$ finitely many times, all intersections being crossing intersections, as desired.
\end{proof}

We are now ready to prove Theorem~\ref{maintheoremfinitefine}, which states that for any closed, orientable surface $S_g$ with $g\geq 2,$ we have that $\diam(\finfine(S_g))=2.$

\begin{proof}[Proof of Theorem~\ref{maintheoremfinitefine}]
    Let $a$ and $b$ be curves in $S_g.$ Let $\nu$ be an embedded annulus in $S_g$ with $a$ as a boundary component. Then, applying Lemma~\ref{lemma:crossingintonly}, we get a curve $c$ (isotopic and potentially equal to $a$) that intersects $b$ finitely many times.
\end{proof}

We further have that the following proposition directly follows from the above discussion.

\begin{proposition}\label{prop:lotsofcurves}
    Let $S$ be an orientable surface without punctures and $\Gamma = \bigcup \gamma_i$ with $\{\gamma_i\}_I$ a finite collection of curves and arcs in $S.$ Let $C\subset S$ be a closed subset of a Cantor set such that $\Gamma \setminus C$ is a collection of disjoint curves and arcs, and let $\Sigma= S\setminus C.$ 

    Then, there is a curve (or arc) $w\subset \Sigma$ with $w\not \in \{\gamma_i\}$ belonging to any isotopy class of curves or arcs in $\Sigma$ such that $|w\cap \gamma_i|<\infty$ for all $i\in I$. Moreover, each intersection of $w$ with each $\gamma_i$ is crossing.
    %Let $\Sigma$ be an orientable surface, potentially of infinite type, with $g\geq 2$. Let $\Gamma$ be a finite collection of disjoint arcs and curves in $\Sigma.$ Then there is a curve $w\not\in \Gamma$ (belonging to any isotopy class of simple closed curves or properly embedded arcs) that intersects each curve and arc in $\Gamma$ in at most finitely many points. Moreover, each intersection of $w$ with $\Gamma$ is crossing.
\end{proposition}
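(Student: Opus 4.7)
The plan is to deduce the proposition directly from Lemma~\ref{lemma:crossingintonly} by taking the annulus (or strip) of that lemma to be a tubular neighborhood of a representative of the chosen isotopy class. First, fix an isotopy class of curves or arcs in $\Sigma$ and pick a representative $w_0$ in it; since the class contains uncountably many representatives while $\{\gamma_i\}_I$ is finite, we may choose $w_0 \notin \{\gamma_i\}$.

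Next, I would construct a closed annular neighborhood $\nu \subset \Sigma$ of $w_0$ (or a strip neighborhood if $w_0$ is an arc) that is thin enough to contain no $\gamma_i$ in its entirety. This is possible because for each $i$, the fact that $w_0 \neq \gamma_i$ together with compactness of $\gamma_i$ produces a point of $\gamma_i$ at positive distance from $w_0$; a sufficiently thin tubular neighborhood of $w_0$ in the open set $\Sigma$ avoids all such points and lies in $\Sigma$. With such a $\nu$ in hand, every component of $\gamma_i \cap \nu$ is a proper closed connected subset of $\gamma_i$, hence an arc (possibly after a small adjustment of $\partial \nu$ to remove isolated tangent intersections). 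Pairwise disjointness of the arcs of $\bigcup \gamma_i \cap \nu$ away from $\partial \nu$ follows from the hypothesis that $\Gamma\setminus C$ is a disjoint collection of arcs and curves together with $\nu \subset \Sigma$; the allowed case of two $\gamma_i$'s coinciding on a full connected component of $\bigcup \gamma_i \cap \nu$ is exactly option (2) of Lemma~\ref{lemma:crossingintonly}'s hypothesis.

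Applying Lemma~\ref{lemma:crossingintonly} to $\nu$ and the finite family $\{\gamma_i\}$ then produces a curve (or arc) $w \subset \nu$ with $|w\cap \bigcup \gamma_i|<\infty$ and all intersections crossing. Since $w \subset \nu$, it is isotopic to $w_0$ in $\Sigma$, so $w$ represents the chosen isotopy class. Since $|w \cap \gamma_i| < \infty$ while each $\gamma_i$ is an uncountable subset of $S$, we immediately get $w \neq \gamma_i$ for every $i$.

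The main obstacle is the bookkeeping needed to make $\nu$ fit the hypothesis of Lemma~\ref{lemma:crossingintonly}: one must pick the tubular neighborhood small enough to avoid swallowing any $\gamma_i$, adjust $\partial \nu$ so that $\gamma_i \cap \nu$ has no isolated tangent-point components, and verify that the disjointness of the $\gamma_i$'s in $\Sigma$ inherited from the hypothesis on $\Gamma \setminus C$ passes down to the components of $\bigcup \gamma_i \cap \nu$. None of these steps is conceptually deep, which is why the paper notes that the proposition \emph{directly follows} from the preceding discussion.
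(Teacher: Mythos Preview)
Your proposal is correct and is precisely the argument the paper has in mind: the paper states only that the proposition ``directly follows from the above discussion,'' and that discussion is Lemma~\ref{lemma:crossingintonly} applied to an annular or strip neighborhood of a representative of the given isotopy class in $\Sigma$, which is exactly what you do. Your bookkeeping about choosing $\nu$ thin enough and contained in $\Sigma$ so that the disjointness hypothesis on $\Gamma\setminus C$ transfers to the components of $\bigcup\gamma_i\cap\nu$ fills in the details the paper leaves implicit.
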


We now have the following corollary on account of the finite diameter of $\finfine(S_g).$

\begin{corollary}
Let $S_g$ be a closed, orientable surface with $g\geq 2.$ Then, $\finfine(S_g)$ is quasi-isometric to a point (and therefore trivially hyperbolic).
\end{corollary}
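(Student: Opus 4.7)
The plan is to deduce this corollary essentially for free from Theorem~\ref{maintheoremfinitefine}, which establishes that $\diam(\finfine(S_g)) = 2$. The general principle is that any metric space of finite diameter is quasi-isometric to a point, so the only thing to do is quote the theorem and spell out the quasi-isometry constants.

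First I would recall the definition: a map $f \colon X \to Y$ between metric spaces is a $(K, C)$-quasi-isometry if it is quasi-surjective (every point of $Y$ is within distance $C$ of the image) and satisfies $\tfrac{1}{K} d_X(x, x') - C \leq d_Y(f(x), f(x')) \leq K \, d_X(x, x') + C$ for all $x, x' \in X$. Let $\ast$ denote the one-point metric space and define $f \colon \finfine(S_g) \to \ast$ to be the constant map. Quasi-surjectivity is automatic since $Y = \ast$. For the distance bound, both sides of the inequality must accommodate $d_{\ast}(f(x), f(x')) = 0$ on the right and any possible $d_{\finfine(S_g)}(x, x')$ on the left.

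By Theorem~\ref{maintheoremfinitefine}, for any pair of vertices $u, v$ of $\finfine(S_g)$ we have $d(u, v) \leq 2$, and since every point on an edge is within $\tfrac{1}{2}$ of a vertex, the diameter of the full graph (including edge interiors) is at most $3$. Thus taking $K = 1$ and $C = 3$ yields $d_{\ast}(f(x), f(x')) = 0 \geq d_{\finfine(S_g)}(x, x') - 3$, while the upper bound $d_{\ast}(f(x), f(x')) = 0 \leq d_{\finfine(S_g)}(x, x') + 3$ is trivial. This verifies the quasi-isometry.

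There is no real obstacle here; the corollary is a formal consequence of Theorem~\ref{maintheoremfinitefine}, and the hyperbolicity claim follows because a one-point space vacuously satisfies any $\delta$-slim triangle condition, and hyperbolicity is a quasi-isometry invariant.
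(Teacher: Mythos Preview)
Your proposal is correct and matches the paper's approach: the paper states this corollary without proof, simply noting it follows ``on account of the finite diameter of $\finfine(S_g)$'' established in Theorem~\ref{maintheoremfinitefine}. You have supplied the routine verification the paper omits.
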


We may also recover the following well-known result.

\begin{corollary}
    For a closed, orientable surface $S_g$ with $g\geq 2,$ the graph $\varinjlim_k \mathcal{C}_k(S_g)$ is complete. That is, any two isotopy classes of curves admit representatives that intersect finitely many times.
\end{corollary}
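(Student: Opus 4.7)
The plan is to deduce this corollary essentially directly from (the proof of) Theorem~\ref{maintheoremfinitefine}, since the edge relation in $\varinjlim_k \mathcal{C}_k(S_g)$ connects two isotopy classes exactly when they admit representatives intersecting in a finite number of points. So given distinct isotopy classes $[a]$ and $[b]$ of essential simple closed curves in $S_g$, it suffices to produce representatives $a'\in[a]$ and $b'\in[b]$ with $|a'\cap b'|<\infty$; such a pair then witnesses an edge in $\mathcal{C}_n(S_g)$ for $n=|a'\cap b'|$, hence an edge in the direct limit.

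To produce these representatives, I would fix an arbitrary representative $a$ of $[a]$ and $b$ of $[b]$, pick an embedded annular neighborhood $\nu$ of $a$ in $S_g$ with $a$ as one boundary component, and then apply Lemma~\ref{lemma:crossingintonly} to $\nu$ with the finite collection $\{\gamma_1\}=\{b\}$. The hypotheses of the lemma are satisfied after, if necessary, perturbing $\nu$ so that $b\cap \nu$ is a disjoint union of arcs in $\nu$ meeting $\partial \nu$ only in $\partial \nu$. The lemma then outputs an essential curve $c\subset \nu$ with $|c\cap b|<\infty$; since $c$ is contained in an annular neighborhood of $a$ and is isotopic to the core of $\nu$, we have $c\in[a]$. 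Taking $a'=c$ and $b'=b$ completes the argument.

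There is essentially no obstacle here: the content of the corollary is already implicit in the proof of Theorem~\ref{maintheoremfinitefine}, because the midpoint curve constructed there was specifically obtained inside an annular neighborhood of one of the given curves and is therefore isotopic to it (rather than being merely some auxiliary curve in $S_g$). The only point to verify carefully is that $c$ is essential in $S_g$, which is the final sentence of Lemma~\ref{lemma:crossingintonly} applied to the boundary component $a$ of $\nu$, which is essential by assumption.
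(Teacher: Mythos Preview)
Your argument is correct and matches the paper's approach exactly: the paper does not give a separate proof of this corollary, but your proof is precisely the content of the paper's proof of Theorem~\ref{maintheoremfinitefine}, which already observes that the curve $c$ produced by Lemma~\ref{lemma:crossingintonly} inside the annulus $\nu$ is isotopic to $a$. Your hedge about perturbing $\nu$ is unnecessary, since with a single curve $b$ the disjointness hypothesis of Lemma~\ref{lemma:crossingintonly} is vacuous.
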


\section{Contractibility of $\mathbf{\finfine(S_{g})}$}\label{sec:contractability}

In this section, we will show that the flag complex induced by $\finfine(S_g)$ is contractible using Whitehead's theorem by showing that every sphere is contractible. We will take this complex to have as its $k$-cells sets of $k+1$ vertices representing curves that pairwise intersect finitely many times.

In particular, let $\gamma_1,\ldots,\gamma_n$ be the vertices of the image of a sphere in $\finfine(S_g)$. We will show that there exists a vertex $\gamma$ adjacent each $\gamma_i$, so the sphere is in the link of a vertex. 

We define an \textit{infinite-type surface} to be a surface whose fundamental group cannot be finitely generated. A surface of finite type is any surface that is not of infinite type.

We will now define the set of ``messy" intersections between a finite set of curves and arcs. Given curves or arcs $\gamma_1,\ldots,\gamma_n$ in a surface $\surf,$ we define $\mathcal{E}(\gamma_1,\ldots,\gamma_k)\subset \surf$ in the following way. (We will abuse notation by treating the $\gamma_i$ both as injective maps from $S^1 = I/\{0\}\sim\{1\}$ or $I$ into $\Sigma$ and as the images of such maps; the use should be clear from context.)
    
Let $E=\{p\in S | p\in \gamma_i\cap \gamma_j \text{ for some }i,j\};$ these are all the points of intersection between the arcs and curves. However, we only want the potentially messy intersections, not the intersections where two curves or arcs coincide for an open interval. Let $P_i=\partial \{\gamma_i^{-1}(p) | p\in E\}\subset I;$ we note that $P_i$ is empty if $\gamma_i$ is disjoint from all other $\gamma_j$. Moreover, $P_i$ is totally disconnected since it is the boundary of a subset of the interval. To get these points back on the surface, let $\E=\bigcup_i \{\gamma_i(P_i)\}.$ We then have the following result about any such $\E.$

\begin{lemma}\label{lemma:finitelymanyintersections}
    Let $\Gamma=\{\gamma_1,\ldots,\gamma_n\}$ be a collection simple closed curves and/or arcs in $\Sigma,$ an orientable and potentially infinite-type surface. Then there is a curve $\gamma$ such that $|\gamma\cap \gamma_i|<\infty$ for all $i.$ Moreover, all intersections between $\gamma$ and $\gamma_i$ are crossing.
\end{lemma}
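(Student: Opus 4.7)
The plan is to exhibit a simple closed curve $\gamma \subset \Sigma$ that is entirely disjoint from $\Gamma := \bigcup_i \gamma_i$, so that the required finite-intersection and crossing conclusions hold vacuously. The crucial observation I would establish first is that $\Gamma$ is a closed subset of $\Sigma$ with empty interior: closedness is immediate since each $\gamma_i$ is the continuous image of a compact domain ($S^1$ or $I$), and the empty-interior claim follows from the fact that each $\gamma_i$ is an embedded 1-manifold in the 2-manifold $\Sigma$, and hence (by invariance of domain, equivalently by the local tameness of curves recalled in the introduction) cannot contain any open subset of $\Sigma$. A finite union of closed nowhere-dense sets is nowhere dense, so $\Sigma \setminus \Gamma$ is open and nonempty.

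Next I would pick any point $p \in \Sigma \setminus \Gamma$ (chosen in the interior of $\Sigma$ if $\Sigma$ has boundary) and a closed embedded disk $D \subset \Sigma \setminus \Gamma$ centered at $p$, which exists since $\Gamma$ is closed. Setting $\gamma := \partial D$ produces a simple closed curve disjoint from every $\gamma_i$, and we conclude with $|\gamma \cap \gamma_i| = 0$ for each $i$, the crossing condition being vacuous. Equivalently — and more in keeping with the surrounding machinery — one can invoke Lemma~\ref{lemma:crossingintonly} with any thin closed annulus $\nu \subset D$: since $\bigcup_i \gamma_i \cap \nu = \emptyset$ is a (vacuously) disjoint collection of arcs in $\nu$, the lemma directly delivers the desired curve $w \subset \nu$.

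The only substantive ingredient is the empty-interior claim for $\Gamma$, which I expect to be the main (and only real) obstacle; this follows from the tameness remark in the introduction together with topological invariance of dimension for 2-manifolds, and requires no genus, boundary, or finite-type hypothesis on $\Sigma$. The strength of the lemma comes not from this bare existence statement but from the freedom it affords in later constructions (e.g. in Corollary~\ref{cor:choosecurve}) to refine $\gamma$ into a specified isotopy class by combining this with the annular surgery techniques of Section~\ref{sec:finefinite}.
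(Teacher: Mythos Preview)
Your argument is technically valid for the literal statement, but it produces an inessential curve: $\gamma = \partial D$ bounds a disk in $\Sigma$. In the paper's usage, ``curve'' in this lemma must mean \emph{essential} simple closed curve --- the very next application (proof of Theorem~\ref{maintheoremcontractibility}) needs $\gamma$ to be a vertex of $\finfine(S_g)$, i.e.\ adjacent to all the $\gamma_i$ in the flag complex. An inessential $\gamma$ is not a vertex, so your proof does not establish what is actually required. The alternative formulation via Lemma~\ref{lemma:crossingintonly} has the same defect: that lemma only guarantees $w$ is essential when $\partial\nu$ is essential, and your $\nu \subset D$ has inessential boundary.

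The paper's proof is doing substantially more work precisely to get an essential $\gamma$. It isolates the ``messy'' part $\mathcal{E}$ of the mutual intersections, proves $\mathcal{E}$ is compact and totally disconnected, removes it to obtain a surface $\Sigma'$ in which the $\gamma_i$ become \emph{disjoint} arcs, and then invokes Proposition~\ref{prop:lotsofcurves} to find a curve in a prescribed (essential) isotopy class meeting these arcs finitely and transversally. This $\mathcal{E}$-construction is the genuine content of the lemma and is exactly what powers Corollary~\ref{cor:choosecurve}. Your final paragraph correctly identifies that the strength lies in the refinement to a specified isotopy class, but you have not supplied any of the machinery that makes that refinement possible --- you have only deferred it. In short: the trivial disjoint-disk argument proves a statement the paper does not need, and skips the statement it does.
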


\begin{proof}
    The goal is to construct a curve disjoint from the ``messy" intersections between the existing curves and arcs. We claim that $\E$ is compact (and therefore closed) and totally disconnected.

    First, $\E$ is a finite union of compact sets and is therefore itself compact. Second, suppose $C$ is a connected component of $\E$ with more than one point. Then at least one of $\gamma_i(P_i)$ is not nowhere dense in $C.$ Let $C'\subset C$ be a connected open subset of $C$ (via the subspace topology) in which $\gamma_i(P_i)$ is dense. Note that $C'$ must have more than one point. Since $\gamma_i(P_i)$ is closed, $\gamma_i(P_i)\cap C'=C'.$ This contradicts the fact that $\gamma_i(P_i)$ is totally disconnected, concluding the proof of the claim. 
    
    Since $\E$ is compact and totally disconnected, we have that $\Sigma' = \Sigma\setminus\E$ is a connected surface (potentially of infinite type and potentially with noncompact boundary). In particular, the universal cover of $\Sigma\setminus \E$ is the hyperbolic plane, which is simply connected, so $\Sigma'$ must be connected. We further have that the images of  $\Gamma\setminus\E$ comprise a collection of \textit{disjoint} arcs in $\Sigma'$, where at most a finite number of $\gamma_i$ coincide at any arc. Abusing notation, we call this collection of arcs $\Gamma$ as well.

    Let $v$ be a curve in $\Sigma'.$ By Lemma \ref{prop:lotsofcurves}, we can perturb $v$ so that it intersects $\Gamma$ at finitely many points with each intersection being crossing. We call the perturbed version $\gamma.$
\end{proof}

\begin{proof}[Proof of Theorem~\ref{maintheoremcontractibility}]
    Let $\gamma_1,\ldots,\gamma_n$ be the vertices of the image of a sphere in $\finfine(S_g)$. By Lemma~\ref{lemma:finitelymanyintersections}, there is a curve $\gamma$ adjacent to all the $\gamma_i$s. Therefore, the sphere is in the link of $\gamma.$ Since $\finfine(S_g)$ is a flag complex, the link of $\gamma$ is contractible. Applying Whitehead's theorem, we have that $\finfine(S_g)$ is contractible.
\end{proof}

In fact, if $\Sigma$ from Lemma~\ref{lemma:finitelymanyintersections} is a compact finite-type surface $S$ with a subset of a Cantor set $P\subset S$ removed, then we have the following corollary.

\begin{corollary}\label{cor:choosecurve}
    Let $S$ be a surface of finite type and $P\subset S$ be a closed subset of a Cantor set. Let $\Sigma = S\setminus P.$ Let $\Gamma=\{\gamma_1,\ldots,\gamma_n\}$ be a finite collection of essential simple closed curves and/or arcs in $\Sigma,$ where all arcs must have their endpoints in $\partial \Sigma.$ Then there is a curve or arc $\gamma$ such that
    \begin{enumerate}
        \item $|\gamma\cap \gamma_i|<\infty$ for all $i$,
        \item all intersections between $\gamma$ and $\gamma_i$ for all $i$ are crossing, and
        \item $\gamma$ belongs to any pre-selected isotopy class of essential simple closed curves and/or arcs in $S$ and is arbitrarily close to any pre-selected representative of said class. (If $\gamma$ is an arc, we add the restriction that all endpoints must be contained in $\partial \Sigma.$)
    \end{enumerate}
\end{corollary}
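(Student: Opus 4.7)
I would reduce Corollary~\ref{cor:choosecurve} to Lemma~\ref{lemma:crossingintonly} by working inside a thin tubular neighborhood of the pre-selected representative, and handle the isotopy-class and proximity requirements through the choice of that neighborhood.

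Let $v$ be the pre-selected representative of the pre-selected isotopy class in $S$. First I would form the messy intersection set $\E = \E(\gamma_1,\ldots,\gamma_n)$ as in the proof of Lemma~\ref{lemma:finitelymanyintersections}, from which $\E$ is compact and totally disconnected. Then $P \cup \E$ is a closed, zero-dimensional subset of $S$. By an arbitrarily small local perturbation---pushing $v$ off each point of $v \cap (P \cup \E)$ inside a disjoint collection of small disk charts---I would arrange that $v$ is disjoint from $P \cup \E$. Since $P \cup \E$ has empty interior, this perturbation can be made inside any prescribed open neighborhood of $v$ and does not change the isotopy class of $v$ in $S$.

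Next, I would choose a tubular neighborhood $\nu$ of the perturbed $v$ that is disjoint from $P \cup \E$; such a $\nu$ exists since $v$ is compact and $P \cup \E$ is closed in $S$, and $\nu$ may be shrunk to lie in any preassigned open set around $v$, which is where the ``arbitrarily close'' requirement is absorbed. If $v$ is a closed curve, $\nu$ is an annulus whose boundary is isotopic to $v$ and therefore essential in $S$; if $v$ is an arc with endpoints on $\partial S$, $\nu$ is a strip whose side $\{0\} \times I$ is essential for the same reason. Inside $\nu$, the family $(\bigcup \gamma_i) \cap \nu$ satisfies the hypotheses of Lemma~\ref{lemma:crossingintonly}---pairwise disjoint arcs away from $\partial \nu$, possibly coinciding on whole connected components---precisely because $\E$ has been removed from $\nu$.

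Applying Lemma~\ref{lemma:crossingintonly} then produces a curve or arc $\gamma \subset \nu$ with $|\gamma \cap \bigcup \gamma_i| < \infty$ and all intersections crossing, and the last clause of that lemma guarantees $\gamma$ is essential in $S$. Because $\gamma \subset \nu$ and $\nu$ is a tubular neighborhood of $v$, $\gamma$ is isotopic to $v$ in $\Sigma$ (and hence in $S$) and lies inside the prescribed open neighborhood of $v$. The main subtlety I expect is the initial perturbation of $v$ off $P \cup \E$: one must verify that a finite cover of $v \cap (P \cup \E)$ by small disks can be used to carry out simultaneous pushes that clear $v$ of these points without introducing new messy intersections with the $\gamma_i$ or leaving the prescribed open neighborhood. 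Total disconnectedness of $P \cup \E$ together with compactness of $v$ makes this standard, but it is the one point at which the zero-dimensionality of the Cantor-set piece $P$ enters nontrivially.
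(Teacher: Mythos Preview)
Your proposal is correct and follows the same overall plan as the paper---work inside a thin tube around the chosen representative and appeal to Lemma~\ref{lemma:crossingintonly}---but the two arguments handle the step you flag as the crux in opposite orders.

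You first perturb $v$ off the closed totally disconnected set $P\cup\E$ and only then take a clean tubular neighborhood $\nu$ disjoint from it. The paper instead takes the annular or strip neighborhood $A$ of the \emph{unperturbed} representative $\alpha$ right away and argues that $A\setminus P$, being a (possibly infinite-type) surface, is path-connected; it then produces a core curve or arc in $A\setminus P$ isotopic to $\alpha$ directly from that connectivity---for arcs by joining the relevant boundary components, and for curves by cutting the annulus along a transversal $L=\{0\}\times I$, choosing $x\in L\setminus P$, and connecting its two copies in the cut-open strip. That path-connectedness argument is really the honest content of your ``simultaneous local pushes'': when $v\cap(P\cup\E)$ is itself a Cantor set one cannot literally push off point by point, and inside each disk of a finite cover the replacement arc must be found via connectivity of the complement of a closed totally disconnected planar set, not by a translation. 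So the paper's packaging of this step is cleaner. Conversely, your decision to form $\E$ at the outset and avoid it together with $P$ makes the verification of the hypotheses of Lemma~\ref{lemma:crossingintonly}, and hence conditions (1) and (2), fully explicit; the paper's written proof of the corollary treats only the avoidance of $P$ and leaves the passage to finite crossing intersections with the $\gamma_i$ to the surrounding Lemma~\ref{lemma:finitelymanyintersections} and Proposition~\ref{prop:lotsofcurves}.
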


\begin{proof}

    Let $\alpha$ be a representative of the desired isotopy class in $S$; we note that $\alpha \cap P$ might not be empty. Consider an annular or strip neighborhood $A$ of $\alpha.$ Then, $A\setminus P$ is a surface, potentially of infinite type, and is therefore path connected. 

    If $\alpha$ is an arc, there is a path in $A\setminus P$ connecting the boundary component(s) $\alpha$ intersects. 

    If $\alpha$ is a curve, cut $A=S^1 \times I$ into a strip via $L=\{0\}\times I.$ Let $x\in L\setminus P.$ Since the resulting strip (once cut open) punctured at $P$ is a surface of infinite type and there are 2 images of $x$ under the cutting operation, there is an arc that joins the two images of $x$ that stays inside the strip. Back on the surface, this path glues up to a curve isotopic to $\alpha$ and disjoint from $P,$ as desired.
    %
    %Since $P$ is a subset of a Cantor set, it is contained in an embedded disk. (This can be seen by using the classification of infinite-type surfaces and taking $P$ to a Cantor subset of any clearly visible embedded disk \cite{richards}.) Any curve in $S$ can be isotoped to avoid said embedded disk.
\end{proof}

\section{Subgraphs of the fine curve graph and the finitary curve graph}\label{sec:subgraphs}

In this section, we prove results about admissibility of subgraphs of $\mathcal{C}_k^\dagger(S_{g,b})$ and $\finfine(S_{g,b}).$ In Section~\ref{subsec:admissible}, we focus on subgraphs that are admissible, addressing Theorems~\ref{maintheorem:finite.in.fine} and \ref{maintheorem.countable.subgraphs.in.finitary}. In Section~\ref{subsec:forbidden}, we focus on subgraphs that are inadmissible in fine curve graphs, proving Theorem~\ref{maintheorem:forbiddensubgraphs}.

\subsection{Admissible subgraphs of fine curve graphs}\label{subsec:admissible}

In this section, we prove Theorems~\ref{maintheorem:finite.in.fine} and \ref{maintheorem.countable.subgraphs.in.finitary}. Let $G$ be a finite graph. First, we prove that for high enough genus, $G$ is isomorphic to an induced subgraph of the fine curve graph of a sufficiently complex surface.

\begin{proof}[Proof of Theorem \ref{maintheorem:finite.in.fine}]
    Let $\gamma_0,\ldots,\gamma_{n-1}$ be $n$ parallel disjoint curves in the torus, drawn in cyclical order, as in the left of Figure~\ref{fig:finitegraphinfine}. For each $\gamma_i,$ consider $n-2$ disjoint closed disks that intersect only that $\gamma_i$ and do so at one point on the boundary. Now, remove the interior of each disk and glue in an annulus between any two curves $\gamma_i,\gamma_j$ if $|i-j|\geq 2$ and  $\{i,j\}\neq \{0,n-1\},$ as in the center of Figure~\ref{fig:finitegraphinfine}. We call these annuli \emph{handles} and label them $A_{i,j}$ if $i<j.$

\begin{figure}[h]
\begin{center}
\includegraphics[width=1.75in]{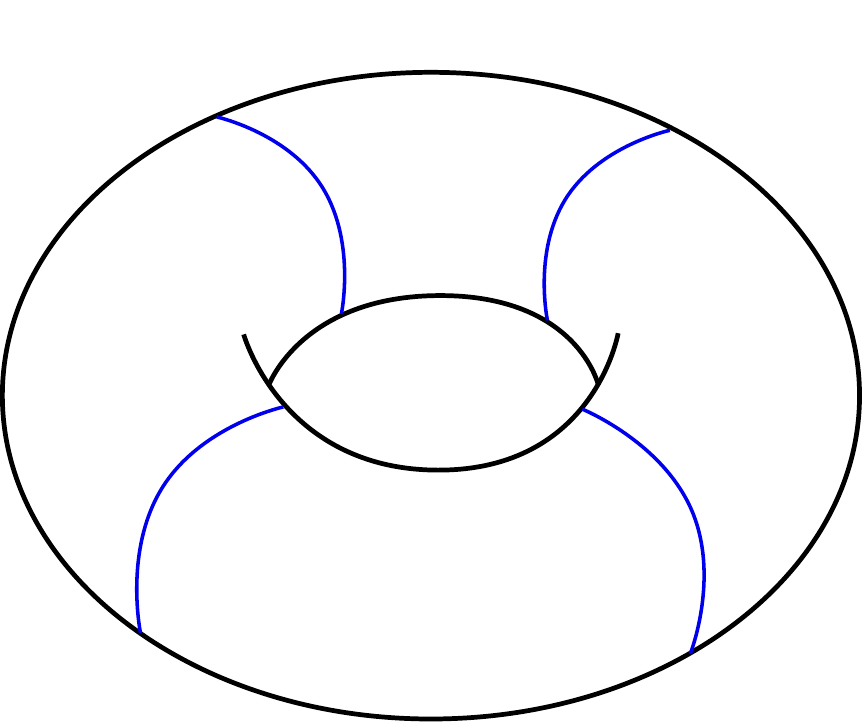}
\includegraphics[width=1.75in]{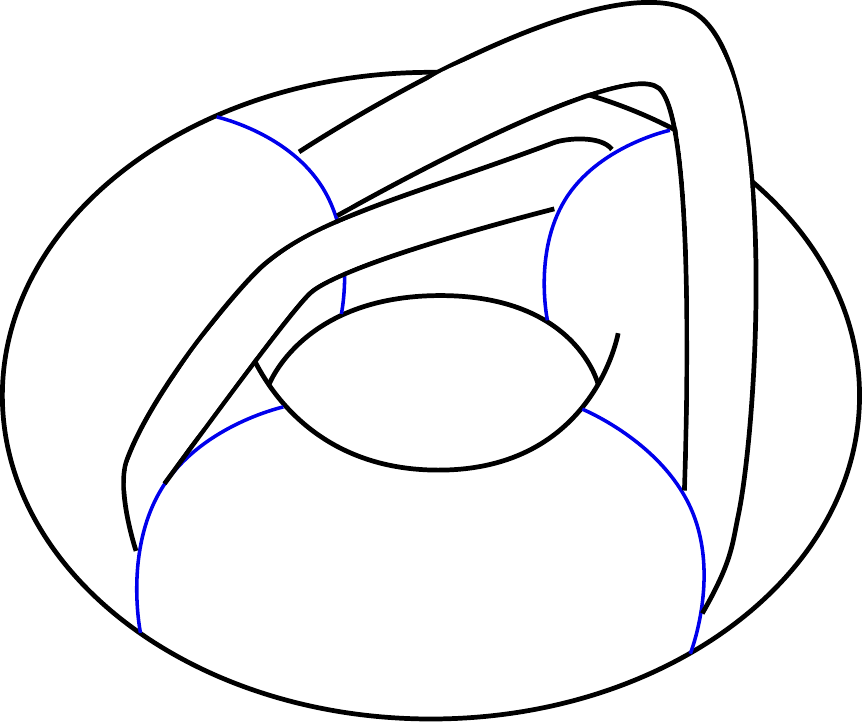}
\includegraphics[width=1.75in]{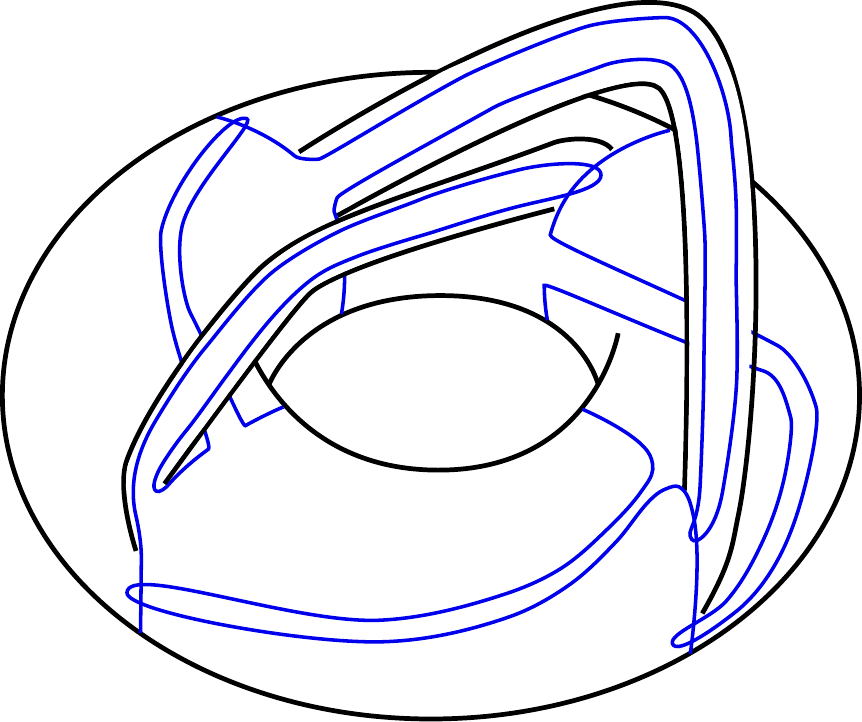}
\caption{Left: Four curves on a torus. Center: Handles (annuli) attached connecting curves that are not directly next to each other. Right: An example of four curves that induce a subgraph of $\fine(S_{1,0})$ isomorphic to the graph on four vertices with no edges.}\label{fig:finitegraphinfine}
\end{center}
\end{figure}

    We now work with adjacency in $G.$ Let $v_0,\ldots,v_{n-1}$ be the vertices of $G.$ If $v_i$ is \textit{not} adjacent to $v_j$ and $i<j,$ we either 1) isotope $\gamma_i$ in a small neighborhood of $A_{i,j}$ disjoint from all other $\gamma_k$ and $A_{i',j'}$ to intersect $\gamma_j$ or 2) isotope $\gamma_i$ to intersect $\gamma_j=\gamma_{i+1}$ away from all the attached handles. An example of a set of curves representing a graph on vertices with no edges can be seen on the right of Figure~\ref{fig:finitegraphinfine}.

    We therefore have perturbed curves $\gamma'_0,\ldots,\gamma'_{n-1}$ such that the subgraph of $\finfine(S_g)$ induced by said curves is isomorphic to $G.$

    To calculate the genus of the constructed surface, notice that attaching one handle increases genus by 1. We are attaching ${n \choose 2}-n$ handles, leading to a total genus of ${n\choose 2}-n+1.$
\end{proof}

We note that the bound on genus in the proof of Theorem~\ref{maintheorem:finite.in.fine} is not sharp; in particular, all graphs on 5 vertices (as shown in Appendix~\ref{appendix:smallgraphsintorus} and all but one graphs on 6 vertices are admissible as induced subgraphs of $\fine(S_1).$

Moreover, the naive $O(g^2)$ bound of Theorem~\ref{maintheorem:finite.in.fine}, though found independently, actually matches known asymptotic bounds for subgraphs of curve graphs \cite{bg}.

We now work with the fine $k$-curve graphs ($k\geq 2$) and finitary curve graph, wherein we prove that any countable graph $G$ is isomorphic to an induced subgraph of $\finfine(\surf)$  and $\mathcal{C}_k^\dagger(\surf)$ for $k\geq 2$. 

\begin{proof}[Proof of Theorem~\ref{maintheorem.countable.subgraphs.in.finitary}]

    (The proof below can be more easily understood by looking at Figure~\ref{fig:anyinfgraph}.) 
    Enumerate the vertices in $G$ so that the vertex set is $v_1,v_2,\ldots.$ Let $A\subset \surf$ be an embedded annulus whose boundary components are essential and non-peripheral (not isotopic to a component of $\partial \surf$). Parameterize $A$ by $A=I\times S^1 = I \times (I/\{0\}\sim \{1\}).$ Let $\{w_n\}_{n\in\mathbb{N}}$ be the set of curves with $w_n=\frac{1}{n+1}\times S^1.$ All of the $w_n$ are disjoint.

    Our goal now is to make all of the $w_n$ intersect such that 1) $|w_i\cap w_j|=2$ for all $i,j$ and 2) each point of $w_i\cap w_j$ has a neighborhood disjoint from all other $w_k.$ In particular, to each $w_n$, $n\geq 2,$ associate a disk $A_n=(\frac{2n+3}{2(n+1)(n+2)},1)\times (\frac{2n+3}{2(n+1)(n+2)},\frac{2n+1}{2n(n+1)})\subset A$. All of these disks are disjoint. Isotope each $w_n$ in $A_n$ to intersect each $w_i$ with $i<n$ in exactly two points, as in Figure~\ref{fig:anyinfgraph}. Thus we have a new collection $\{w'_n\}$ of curves that are still all adjacent to each other.

    \begin{figure}[h]
\begin{center}
\includegraphics[width=\textwidth]{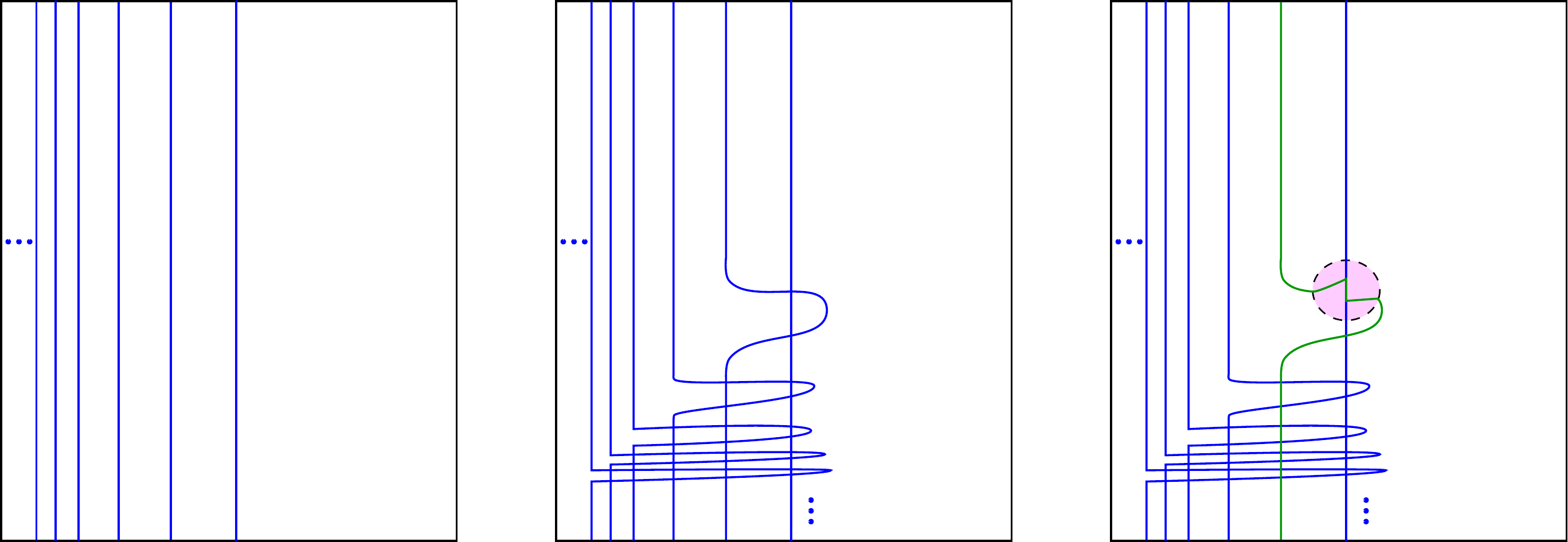}
\caption{Left: A schematic of $A=I\times (I/\{0\}\sim \{1\})$ with curves $\{w_n\}$. Center: A schematic of $A$ with each $w_j$ isotoped to intersect each $w_i$ with $i<j$ in exactly two points, creating the collection $\{w'_n\}$. Right: A perturbation of $w'_2$ in $P_{1,2}$ so that it intersects $w'_1$ infinitely many times. This is done in the case that $v_1$ is not adjacent to $v_2$ in $G.$ Thus, the images $u_1$ and $u_2$ of $w'_1$ and $w'_2$ after perturbation are not adjacent in $\finfine(\surf).$}\label{fig:anyinfgraph}
\end{center}
\end{figure}

    Let $\{P_{i,j}\}$ be a family of disjoint open sets with $P_{i,j}\subset A_i$ an open neighborhood of a point of $w'_i\cap w'_j$ that does not intersect any other $w'_k.$ If $v_i$ is not adjacent to $v_j$ for $i<j,$ isotope $w'_j$ in $P_{i,j}$ to intersect $w'_i$ infinitely many times. Once $w'_j$ has been isotoped in this way for all $i<j$ with $v_i$ not adjacent to $v_j$, call it $u_j.$ We note that all curves need to be isotoped only finitely many times, and due to the restrictions on the $P_{i,j}s,$ the isotopies do not create extra intersections with unrelated curves. 

    Define $f:G\to \finfine(\surf)$ by $f(v_i)=u_i.$ We therefore have that $u_j=f(v_j)$ is adjacent to $u_i=f(v_i)$ if and only if $v_i$ is adjacent to $v_j,$ as desired.

\end{proof}

We now turn our attention to the Erd\H{o}s-R\'enyi graph. The Erd\H{o}s-R\'enyi graph, which we denote $ERG,$ is the unique random on countably infinitely many vertices. This fact is proven using the following property of $ERG$.

\begin{center}
    ($*$) Given finitely many distinct vertices $u_1\ldots,u_m,v_1\ldots,v_n,$ there is a vertex $z$ adjacent to each $u_i$ and not adjacent to each $v_j.$
\end{center}

We show that $\finfine(\surf)$ does \textit{not} have Property ($*$) but nevertheless has $ERG$ as a subgraph. We find a collection of curves $v_1,\ldots,v_n,u$ for which there is no $z$ adjacent to $v_1,\ldots,v_n$ but not adjacent to $u$ in Figure~\ref{fig:counterexample}.

\begin{figure}[h]
\begin{center}
\includegraphics[width=2in]{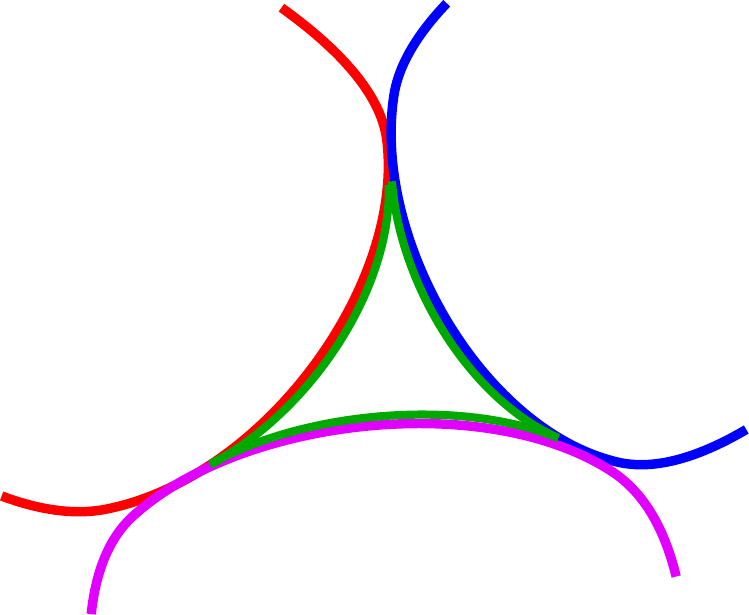}
\caption{A schematic of a curve contained in the union of other curves.}\label{fig:counterexample}
\end{center}
\end{figure}

What makes the above a counterexample is that $u\subset \cup v_i.$ If we construct a subgraph such that no curve is a subset of the union of the other curves, we will have a chance at having a subgraph of $\finfine(\surf)$ isomorphic to $ERG.$ %We do this in the proof of Theorem~\ref{maintheorem.countable.subgraphs.in.finitary}. Corollary~\ref{cor.erg.in.finitary} follows from the construction.

Now that we have fully classified countable graph admissibility in $\kfine(\surf)$ for $k\geq 2$ and $\finfine(\surf),$ we turn our attention to $\fine(\surf).$ We introduce a lemma related to recursively building admissible subgraphs of $\fine(\surf).$

\begin{lemma}[Inductive admissible subgraphs]\label{combo:lemma:subgraphadmissibility}
    Let $G$ be a finite graph that is realized as an induced subgraph of $\fine(\surf).$ Then the following graphs are also admissible.
    \begin{enumerate}
        \item (Disjoint union) $G\sqcup v$, the disjoint union of $G$ with an isolated vertex $v$
        \item (Single vertex attachment) $G\cup v,$ where the degree of $v$ is 1
        \item (Copycat vertex) $G \cup v,$ where $v$ is a \textit{copycat} vertex; that is, there exists a vertex $w\in G$ such that the neighborhood of $w$ equals the neighborhood of $v$ and $w$ is not adjacent to $v$
        \item (Blowup to a clique) $G \cup K_n$, where $K_n$ is a blowup of some vertex $w$ in the sense that $w$ is replaced with $K_n$ and each $v\in K_n$ has the same adjacencies as $w$ 
        \item (Cone vertex to a clique) $G\cup K_n,$ where $K_n$ is a clique on $n$ vertices and there exists a single vertex $w\in G$ such that every vertex in the $K_n$ is adjacent to only that vertex in $G$ (so $K_n\cup \{w\}$ is a $K_{n+1}$)
        %\item (Wedge of graphs) Let $G_1$ and $G_2$ be admissible in $\fine(S_g),$ and $\gamma_1\in V(G_1)$ and $\gamma_2\in V(G_2).$ Suppose there is a realization of $G_1$ and $G_2$ such that $\gamma_1$ and $\gamma_2$ are both non-separating. Then $G=G_1 \cup G_2$ by identifying $\gamma_1$ and $\gamma_2$ is admissible in $\fine(\surf)$.
    \end{enumerate}
    Moreover, we have a strengthening of (1) and (2) above that follow directly from (1) and (2):
    \begin{enumerate}
        \item[1'.] $G \sqcup_i v_i,$ where $\{v_i\}$ is a finite collection of isolated vertices.
        \item[2'.] $G \cup T,$ where $T$ is a tree and $T$ is attached to $G$ by 1 edge.
    \end{enumerate}
\end{lemma}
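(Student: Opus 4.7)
\pit{Proof plan}
The strategy is to give explicit constructions for each of the five cases, built from two basic operations. The first is the \emph{parallel copy}: for any vertex $w$ in the given realization of $G$, choose a thin open annular neighborhood $A$ of $w$ disjoint from every $\gamma_i$ that is disjoint from $w$; such $A$ exists because that family of $\gamma_i$'s is closed and misses $w$. Any essential curve $w'\subset A$ isotopic to $w$ is then automatically disjoint from $w$ and from every $\gamma_i$ adjacent to $w$ in $G$. The second is a \emph{finger move}: given a curve $v$ and a target $\gamma_i$, if a small disk $D$ has its interior meeting only $\gamma_i$ among the curves whose adjacencies with $v$ we want to preserve, then pushing a finger of $v$ into $D$ creates two crossing intersections of $v$ with $\gamma_i$ and changes no other adjacency.

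Applying these tools, case (1) starts with any essential curve, uses Corollary~\ref{cor:choosecurve} to reduce its intersections with each $\gamma_i$ to finitely many crossings, and performs one finger move at a clean point of each $\gamma_j$ it happens to miss. Case (2) begins with a parallel copy $v$ of $w$ in $A$; this $v$ is correctly disjoint from $w$, but for every other $\gamma_i\neq w$ one must perform a finger move (inside $A$ when $\gamma_i$ meets $w$, outside $A$ across $\partial A$ when not, always avoiding $w$) to produce the required intersection. Case (3) begins with a parallel copy $v$ of $w$ in $A$; the adjacencies with every $\gamma_i\neq w$ disjoint from $w$ are already correct, so the only finger moves needed are one across $w$ itself and one toward each $\gamma_i$ that meets $w$ but not $v$. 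Case (4) uses $n$ pairwise parallel copies of $w$ in $A$ and runs the finger moves of case (3) on each copy (omitting the one aimed at $w$), after deleting $w$ from the realization; the copies are mutually disjoint because they are pairwise parallel in $A$. Case (5) uses $n$ pairwise parallel copies of $w$ in $A$ and runs the finger moves of case (2) on each copy. The refinements (1') and (2') follow from (1) and (2) by iterating vertex by vertex, processing the tree in (2') outward from the attachment vertex.

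The main obstacle is that a finger move aimed at $\gamma_i$ requires a point of $\gamma_i$ not lying on any other $\gamma_j$; if some $\gamma_i$ is contained in $\bigcup_{j\neq i}\gamma_j$, as in Figure~\ref{fig:counterexample}, this fails. My plan to sidestep this is to first re-realize $G$ in clean position: I apply Corollary~\ref{cor:choosecurve} to each $\gamma_i$ in turn, replacing it by a curve in the same isotopy class, close enough to the original to preserve adjacencies with the already-processed $\gamma_j$'s, but meeting $\bigcup_{j\neq i}\gamma_j$ in only finitely many crossings. In the resulting realization every $\gamma_i$ contains an open arc missing the others, so the disks $D$ required for every finger move above are available. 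A secondary care point, which must be handled when multiple finger moves are performed in cases (4) and (5), is to choose disjoint target disks for the $n$ parallel copies so that the surgeries producing the needed crossings do not introduce unwanted intersections among the copies themselves; this is straightforward once each $\gamma_i$ has the open arcs guaranteed by the cleanup step.
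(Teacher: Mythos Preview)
Your plan is essentially the paper's: both are built on the same two moves, parallel copies in a thin annular neighborhood and finger pushes. Your cleanup step is precisely what the paper isolates separately as Lemma~\ref{combo:lemma:niceadmissiblesubgraphs} (Nice admissible subgraphs); be aware, though, that ``close enough to preserve adjacencies'' is not automatic---closeness preserves disjointness but can destroy intersection---so the paper explicitly reintroduces lost intersections by surgery, which you have available as finger moves but should state as part of the cleanup rather than leave implicit.

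The case-by-case executions differ only in packaging. The paper's (2) cuts $S$ along the attachment curve and applies (1) in the complement, rather than taking a parallel copy and fingering outward; its (3) is a local perturbation of $\phi(w)$ inside a single small disk rather than a parallel copy plus fingers, which is a bit cleaner; its (4) observes that after the cleanup every $\gamma_i$ meeting $w$ already has a crossing strand in $A$, so the $n$ parallel copies automatically intersect every curve they should and \emph{no finger moves are needed at all}---your fingers there are redundant; and its (5) simply composes (2) with (4). One caution your argument shares with the paper's: in case~(2), if $\phi(w)$ happens to be separating with curves disjoint from $\phi(w)$ lying on both sides, then no curve disjoint from $\phi(w)$ can intersect all of them, so neither construction succeeds from the given realization $\phi$. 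This does not affect the torus, where the lemma is actually used, since every essential curve there is nonseparating.
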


\begin{proof}
    Let $\phi:G\to \fine(\surf)$ be an injective homomorphism such that the induced subgraph of $\phi(V(G))$ is isomorphic to $G.$ We will conflate $\phi(G)$ with $G$ in what follows.
    We prove each part in turn.
    
    \pit{1. Disjoint union} We may take any curve $\gamma$ on $\surf$ and isotope it to intersect all of the curves that comprise the vertices of $\phi(G)$ to create the curve $\gamma'.$ We may then extend $\phi$ by $\phi(v)=\gamma'.$ (1') follows by recursively adding isolated vertices.

    \pit{2. Single vertex single attachment} Suppose $v$ is adjacent to $u\in G.$ Then, cut the surface $\surf$ along $\phi(u)$. We then apply (1) to $S_g\setminus \phi(u):$ take any curve $\gamma$ in $S_g\setminus \phi(u)$ and isotope it to intersect all other curves in $\phi(V(G -\{u\})),$ resulting in the curve $\gamma'.$ We may then extend $\phi$ by $\phi(v)=\gamma'.$ (2') follows by recursively adding vertices from the tree, beginning at a chosen root and proceeding by vertex depth in the tree.

    \pit{3. Copycat vertex} Let $w\in V(G).$ Let $V_w^c = \{v\in V(G) | v \text{ is not adjacent to }w\}$. We will construct a copycat vertex $w'$ of $w.$ Let $\{p_i\}_{I=|V_w^c|}$ be a collection of one point in every nonempty $\phi(w)\cap \gamma_i$ for each $\gamma_i\in \phi(V_w^c).$ Moreover, let $N_i$ be a neighborhood of each $p_i$ such that $\phi(w)\not \subseteq \cup N_i$. This is possible since $|V(G)|<\infty$ (and since all intersections are compact sets). Let $N$ be a neighborhood of a point in $\phi(w)\setminus\cup N_i$. We ask that $N$ is disjoint from any curve in $\phi(V(G))$ from which $w$ is disjoint. 

     Isotope $w$ in $N$ to create $w'=\phi(v),$ which has have the same adjacency properties as $w$ by construction. Be sure that $w'$ still intersects $w$ after the isotopy.

     \pit{4. Blowup to a clique} By Lemma~\ref{combo:lemma:niceadmissiblesubgraphs}, we may assume that all intersections between curves in $\phi(V(G))$ are crossing. Let $v\in G.$ Then, there is an annular neighborhood $A$ of $\phi(v)$ such that if $u$ is adjacent to $v,$ then $\phi(u)\cap A = \emptyset$ and if $u$ is not adjacent to $v,$ then there is a crossing strand of $\phi(u)$ in $A.$ In particular, any curve parallel to $\phi(v)$ contained in $A$ intersects $\phi(u).$ Insert $n$ parallel copies of $\phi(v)$ in $A$ and set them as the image of the $K_n$ that $v$ is replaced with.
     
    \pit{5. Cone vertex to clique} By (2), for one vertex $v$, $G\cup \{v\}$ is admissible in $\fine(\surf).$ By (4), we can blow $v$ up to a clique $K_n.$

\end{proof}

%Note that (6) above actually implies (2), (4) and (5).

\subsection{Inadmissible subgraphs of fine curve graphs}\label{subsec:forbidden}

In this subsection, we prove Theorem~\ref{maintheorem:forbiddensubgraphs} about the inadmissibility of finite subgraphs of $\surf$.

A naive guess may be to consider finite graphs that are inadmissible in curve graphs. Perhaps the most simple graph inadmissible in $\surf$ is a complete graph on $3g+b-2$ vertices since a maximal collection of disjoint curves in $\surf$ consists of $3g+b-3$ curves (corresponding to a pants decomposition). However, a complete graph on uncountably many vertices can be realized in an annulus (allowing for boundary parallel curves). This further implies that clique number (the size of the largest complete subgraph) and chromatic number are not barriers for induced subgraphs since both are infinite for fine curve graphs.

The next guess may be the \emph{half-graphs} of Bering--Conant--Gaster: bipartite graphs whose vertices are partitioned into 2 sets, $\{v_1,\ldots,v_N\}$ and $\{w_1,\ldots,w_N\}$ while $v_i$ is adjacent to $w_j$ if and only if $j\geq i$. Bering--Conant--Gaster prove that for a given surface, there exists a half-graph of sufficiently high complexity that is inadmissible in $\mathcal{C}(\surf)$ \cite{bcg}. However, an arbitrarily large half-graph can be realized in an annulus, as shown in Figure~\ref{fig:combo:halfgraph}. This proves that the complexity measure of Bering--Conant--Gaster that precludes finite graphs from being admissible in the curve graphs of surfaces does not apply in the case of fine curve graphs. (Moreover, this implies that, unlike curve graphs, fine curve graphs are not $k$-edge stable for any $k$, impacting their stability in the model theoretic sense; see Bering--Conant Gaster \cite{bcg} and Disarlo--Koberda--de la Nuez Gonzales \cite{dkn}.)

\begin{figure}[h]
\begin{center}
\begin{tikzpicture}
    \node[anchor = south west, inner sep = 0] at (0,0) {\includegraphics[width=03in]{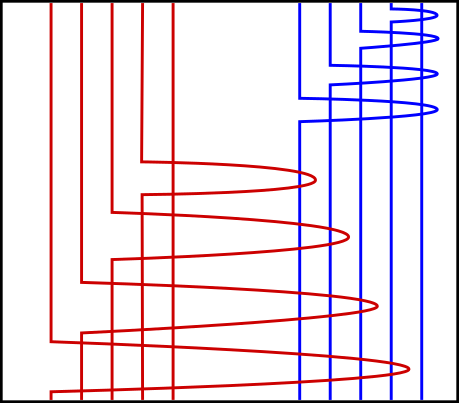}};
    %\draw[help lines] (0,0) grid (8,7);
    \node at (0.8,6.9){$v_5$};
    \node at (1.3,6.9){$v_4$};
    \node at (1.83,6.9){$v_3$};
    \node at (2.33,6.9){$v_2$};
    \node at (2.86,6.9){$v_1$};

    \node at (5,6.9){$w_1$};
    \node at (5.5,6.9){$w_2$};
    \node at (6,6.9){$w_3$};
    \node at (6.5,6.9){$w_4$};
    \node at (7,6.9){$w_5$};

    \end{tikzpicture}
\caption{A realization of a half-graph on 10 vertices in an annulus (the top and bottom edges of the rectangle are identified). A similar realization exists for arbitrarily large half-graphs.}\label{fig:combo:halfgraph}
\end{center}
\end{figure}

We note, however, that the construction of Bering--Gaster for graphs inadmissible in curve graphs of surfaces with finite genus does hold in the fine curve graph case \cite{bg}.

We will approach the proof of Theorem~\ref{maintheorem:forbiddensubgraphs} in parts. First, we will find inadmissible subgraphs for an annulus, then a torus (a singular case), then a pair of pants. We will then use the case of the pair of pants to build larger inadmissible subgraphs. To define fine curve graphs in these cases: for annuli and pairs of pants, we allow vertices to be boundary-parallel properly embedded closed curves, while for tori, annuli, pairs of pants, and four-holed spheres, we take edges to connect precisely those vertices that are disjoint.

To begin, we prove that all admissible subgraphs can be realized via nicely interacting curves.

\begin{lemma}[Nice admissible subgraphs]\label{combo:lemma:niceadmissiblesubgraphs}
    If a finite graph $G$ is an admissible subgraph of $\fine(S_g),$ then there is an injective graph homomorphism $\phi:G\to \fine(S_g)$ with $G\cong \phi(G)$ such that all curves in $\phi(V(G))$ pairwise intersect finitely many times and with only crossing intersections.
\end{lemma}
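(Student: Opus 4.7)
The plan is to take an existing realization $\{c_1,\ldots,c_n\}$ of $G$ in $\fine(S_g)$ and modify it in two stages: first install a locally transverse crossing at one point of every intersecting pair, then clean up each curve one at a time using Corollary~\ref{cor:choosecurve}. This produces a new realization $\{c'_1,\ldots,c'_n\}$ with finite, crossing pairwise intersections that still induces $G$.

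For the pre-processing stage, for each unordered pair $\{i,j\}$ with $c_i\cap c_j\neq\emptyset$, pick a point $p_{ij}\in c_i\cap c_j$ and a small open disk neighborhood $N_{ij}$ of $p_{ij}$, with the various $N_{ij}$ chosen pairwise disjoint and each disjoint from every curve $c_k$ for $k\notin\{i,j\}$ (possible since the other curves are compact). Inside $N_{ij}$, locally perturb $c_i$ within its isotopy class to introduce an honest transverse crossing with $c_j$ near $p_{ij}$, for example by pushing a small subarc of $c_i$ off of $c_j$ and then back across. Because $N_{ij}$ meets no other curve, this preserves every other intersection pattern, so the induced subgraph is still $G$ and every non-adjacent pair is now realized by curves that share an honest transverse crossing.

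For the main processing stage, process the curves in order $k=1,\ldots,n$. At step $k$, apply Corollary~\ref{cor:choosecurve} to $c_k$ with collection $\{c'_1,\ldots,c'_{k-1},c_{k+1},\ldots,c_n\}$, pre-selected isotopy class that of $c_k$, and pre-selected representative $c_k$, to obtain a curve $c'_k$ arbitrarily close to $c_k$ meeting each listed curve in finitely many crossing points. By choosing the closeness small enough at each step we preserve both (a) every disjointness, since two disjoint compact subsets of $S_g$ admit disjoint open neighborhoods, and (b) every intersection, since the transverse crossings installed in the pre-processing stage are stable under sufficiently small perturbations of either curve (they persist first as we replace $c_j$ by $c'_j$ for $j<k$ and again as we replace $c_k$ by $c'_k$). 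The main obstacle is precisely (b): without the pre-processing, a small isotopy can eliminate all intersections between two curves whose isotopy classes have geometric intersection number zero, so the transverse crossings installed in Stage 1 are essential for anchoring the non-adjacency edges of $G$. Since there are only finitely many pairs of curves and finitely many stability constraints at each of the $n$ steps, compatible closeness parameters can be chosen, yielding the desired $\{c'_1,\ldots,c'_n\}$.
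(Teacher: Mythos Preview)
Your main processing stage is essentially sound: Corollary~\ref{cor:choosecurve} does produce curves arbitrarily $C^0$-close to a given representative, closeness preserves disjointness between compact sets, and a genuine transverse crossing is $C^0$-stable, so intersections anchored by such crossings persist through the replacements. The gap is in the pre-processing. You assert that for each intersecting pair one can choose $p_{ij}\in c_i\cap c_j$ with a disk neighborhood $N_{ij}$ disjoint from every $c_k$, $k\notin\{i,j\}$, citing compactness of the other curves. But compactness only yields such a neighborhood once $p_{ij}\notin c_k$ is already known, and nothing rules out $c_i\cap c_j\subset\bigcup_{k\neq i,j}c_k$. Concretely: if $c_1,c_2,c_3$ all pass through a single point $p$ with $c_1\cap c_2=\{p\}$, then $p_{12}=p\in c_3$ is forced, no $N_{12}$ avoids $c_3$, and perturbing $c_1$ near $p$ may destroy $c_1\cap c_3$ if that too equals $\{p\}$. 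The same obstruction also breaks the pairwise disjointness of the $N_{ij}$ when several pairwise intersections coincide.

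This is exactly the difficulty the paper's argument handles by a different mechanism. Rather than pre-installing stable crossings, the paper replaces each $\gamma_i$ (via Corollary~\ref{cor:choosecurve}) inside a thin annular neighborhood chosen disjoint from all curves $\gamma_i$ was already disjoint from, \emph{accepts} that the replacement $\gamma_i'$ may have become disjoint from curves it should still meet, and then explicitly re-introduces each missing intersection by surgering onto $\gamma_i'$ a pair of parallel arcs reaching across to the other curve, those arcs themselves produced by a further application of Corollary~\ref{cor:choosecurve}. Your approach would be cleaner if the pre-processing could be carried out as stated, but in general it cannot, and some form of the paper's re-introduction step is needed to repair the lost intersections.
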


\begin{proof}
    We will conflate $G$ with its image in $\fine(S_g).$ Suppose the vertices of $G$ are $\gamma_1,\ldots,\gamma_n.$ We will ensure that there is another induced subgraph of $\fine(S_g)$ isomorphic to $G$ such that the vertices are all curves that pairwise have only crossing intersections by adjusting the image of a preexisting injective graph homomorphism.
   
    We begin with $\gamma_1$. There is an annular neighborhood $A$ of $\gamma_1$ that is disjoint from all $\gamma_i$ that $\gamma_1$ is disjoint from. By Corollary~\ref{cor:choosecurve}, there exists a curve $\gamma'_1$ in $A$ (and thus isotopic to $\gamma_1$) such that $\gamma'_1$ intersects all other $\gamma_i$ finitely many times and only at crossing intersections. We note that $\gamma'_1$ is adjacent to all curves $\gamma_1$ is adjacent to, but may in fact be adjacent to too many curves.  We must therefore reintroduce intersections between $\gamma'_1$ and other curves. Without loss of generality, suppose we are reintroducing intersections with $\gamma_2.$

    First, we know $\gamma_2\cap A\neq \emptyset$ since $\gamma_2\cap \gamma_1 \neq \emptyset.$ Let $\mathcal{E}=\mathcal{E}(\gamma'_1,\gamma_2,\ldots,\gamma_n).$ Consider $A'=A\setminus \mathcal{E},$ so the $\gamma_i$ become arcs in $A'.$ Connect $\gamma'_1$ to $\gamma_2$ in $A'$ by a path $\alpha$ disjoint from $\gamma'_1$ and $\gamma_2$ except at its endpoints. This is possible because any path from $\gamma'_1$ to $\gamma_2$ has a first intersection with $\gamma_2$ and a preceding intersection with $\gamma'_1$; we take the subpath between these intersections to be $\alpha.$ By cutting $A'$ along $\gamma'_1$ and $\gamma_2,$ we can apply Corollary~\ref{cor:choosecurve} to find 2 disjoint, arbitrarily close parallel paths $\alpha_1$ and $\alpha_2$ connecting $\gamma'_1$ and $\gamma_2$ that intersect all other $\gamma_i$ finitely many times each (at crossing intersections). We then connect these two paths by a third path $\alpha_3$ near $\gamma_2$ (and disjoint from all other $\gamma_i$) such that the combined path $\alpha_1,\alpha_3,\alpha_2$ crosses $\gamma_2$ exactly twice. Finally, we surger $\gamma'_1$ with this path to create a curve $\gamma''_1$ that (1) remains disjoint from all curves from which $\gamma_1$ is disjoint, (2) intersects all $\gamma_i$ at most finitely many times, and (3) intersects $\gamma_2.$

    We repeat the above surgeries to ensure that $\gamma_1$ has the correct intersection patterns before moving on to $\gamma_2$ and performing the same algorithm. (It is possible that at each step of the process, we may redo work that was done in a previous step.)

    Since each step is done finitely many times, it follows that all curves intersect each other finitely many times and no new non-crossing intersections are introduced. Moreover, since one curve is being isotoped at a time, all intersections are preserved. Therefore, the resulting isotoped curves comprise an induced subgraph of $\finfine(\surf)$ isomorphic to $G.$
\end{proof}

\begin{lemma}[Inadmissible graphs, the annular case]\label{prop:combo:inadmissibleannulus}
    The following two graphs are inadmissible as subgraphs of the fine curve graph of an annulus.
    \begin{enumerate}
        \item $G_1=$ a $2n+1$-cycle for $n\geq 2$, as pictured in the top left of Figure~\ref{fig:combo:inadmissiblegraph1} without the central (unlabeled) vertex.
        \item $G_2=$ the graph pictured in Figure~\ref{fig:combo:inadmissiblegraph2} without the central (unlabeled) vertex.
    \end{enumerate}
\end{lemma}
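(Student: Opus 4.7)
The plan is to pass to the universal cover of the annulus $A = S^1 \times I$ and show that the subgraph of $\fine(A)$ induced on any finite family of essential simple closed curves is the comparability graph of a natural strict partial order; inadmissibility of $G_1$ and $G_2$ then follows from the fact that comparability graphs admit transitive orientations, which rule out these structures.

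First I would set up the universal cover and the partial order. Let $p : \tilde A = \mathbb{R} \times I \to A$ be the universal cover, with deck group $\mathbb{Z}$ acting by horizontal translation. An essential simple closed curve $\gamma \subset A$ generates $\pi_1(A) = \mathbb{Z}$, so $p^{-1}(\gamma)$ is a single connected $\mathbb{Z}$-invariant properly embedded simple curve $\tilde\gamma \subset \tilde A$ which separates $\tilde A$ into a $0$-side (containing $\mathbb{R}\times\{0\}$) and a $1$-side (containing $\mathbb{R}\times\{1\}$); moreover $\gamma \cap \gamma' = \emptyset$ iff $\tilde\gamma \cap \tilde{\gamma'} = \emptyset$. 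Declare $\gamma \prec \gamma'$ iff $\gamma \cap \gamma' = \emptyset$ and $\tilde\gamma$ lies in the $0$-side of $\tilde{\gamma'}$. Two disjoint $\mathbb{Z}$-invariant separating lines in $\tilde A$ must each lie entirely on one side of the other, so disjoint pairs are precisely the comparable pairs of $\prec$; transitivity follows because if $\gamma \prec \gamma' \prec \gamma''$ then $\tilde\gamma$ lies in the $0$-side of $\tilde{\gamma'}$ while $\tilde{\gamma''}$ lies in the $1$-side, so $\tilde\gamma \cap \tilde{\gamma''} = \emptyset$ and $\tilde\gamma$ is on the $0$-side of $\tilde{\gamma''}$. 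Hence the edge-set of $\fine(A)$ restricted to any finite family of curves is exactly the comparability relation of $\prec$.

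To rule out $G_1 = C_{2n+1}$ for $n \ge 2$, orient each edge of an induced cycle $v_1, \ldots, v_{2n+1}$ via $\prec$. At any vertex $v_i$, the two incident cycle-edges cannot be oriented with one pointing ``in'' and the other ``out'', for then $v_{i-1} \prec v_i \prec v_{i+1}$ (or its reverse) would force $v_{i-1} \prec v_{i+1}$ by transitivity, contradicting that $v_{i-1}$ and $v_{i+1}$ are non-adjacent in the induced cycle. Hence each $v_i$ is a source or a sink of the cycle orientation, requiring sources and sinks to alternate around the cycle -- impossible on odd length. For $G_2$ I would run an analogous argument, identifying within $G_2$ an induced configuration (for instance an odd antihole or other forbidden substructure for comparability graphs) whose transitive orientation is obstructed, and deriving a contradiction from the $\prec$-orientation of its edges. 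The main obstacle will be $G_2$: while $G_1$ falls to the cycle-alternation argument in one stroke, pinning down the precise obstruction in $G_2$ may demand case analysis on how $\prec$ can orient its edge set rather than a single clean parity count.
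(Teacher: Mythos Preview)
Your approach is correct and is, at bottom, the same idea the paper uses---an essential curve in the annulus separates it into a ``$0$-side'' and a ``$1$-side'', and disjointness of curves is exactly comparability in the resulting strict partial order---but you have framed it more cleanly and generally as the statement that any finite induced subgraph of $\fine(S_{0,2})$ is a comparability graph. The paper argues each case by hand with ad hoc ``left/right'' bookkeeping; your framing makes the $G_1$ case a one-line parity argument (no transitive orientation of an induced odd cycle of length $\geq 5$), exactly as you wrote.

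Two remarks. First, the universal cover is unnecessary: an essential simple closed curve already separates the annulus itself, so you can define $\prec$ directly in $A$ and skip the lifting. Second, your $G_2$ argument is not actually a case analysis once you know what $G_2$ is: it is the \emph{net} (a triangle $v_3v_4v_5$ with a pendant vertex attached to each triangle vertex, the three pendants pairwise non-adjacent). In any transitive orientation the triangle is a chain, say $v_3\prec v_4\prec v_5$; then the pendant $v_1$ at the middle vertex $v_4$ must satisfy $v_1\prec v_4$ or $v_4\prec v_1$, forcing $v_1\prec v_5$ or $v_3\prec v_1$ respectively, contradicting that $v_1$ is adjacent only to $v_4$. (The pendants $v_2,v_6$ at $v_3,v_5$ are what pin the automorphism so that one cannot relabel the triangle to put $v_4$ at an extreme---this is exactly the role the paper assigns them.) This is precisely the paper's argument for $G_2$, rephrased in your order-theoretic language.
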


\begin{figure}[h]
\begin{center}
\begin{tikzpicture}
    \node[anchor = south west, inner sep = 0] at (0,0) {\includegraphics[width=5in]{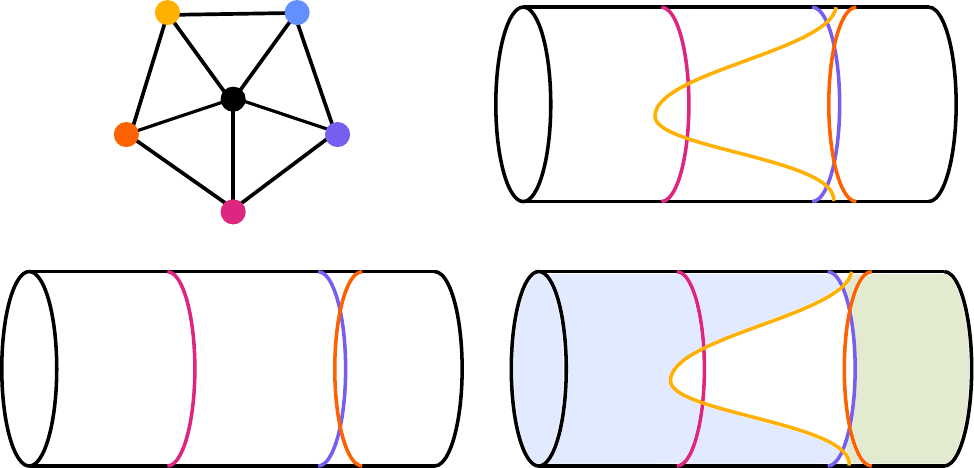}};
    %\draw[help lines] (0,0) grid (13,7);
    \node at (4.8,4.4) {$v_1$};
    \node at (3.65,3.3) {$v_2$};
    \node at (1.18,4.4) {$v_3$};
    \node at (2.2,6.27) {$v_4$};
    \node at (3.9,6.27) {$v_5$};

    %lower left
    \node at (2.2,1.2) {$v_2$};
    \node at (4.1,1.2) {$v_3$};
    \node at (4.8, 1.2) {$v_1$};

    %upper right
    \node at (10,5) {$v_4$};
    \end{tikzpicture}
\caption[A graph on 6 vertices inadmissible as a subgraph of $\fine(S_1).$]{We conflate $v_i$ with its image under a graph embedding into $\fine(S_1).$ Top left: a graph $G$ on 6 vertices that is inadmissible as a subgraph of $\fine(S_1)$. Without the central 
(unlabeled) vertex, this graph is inadmissible as a subgraph of $\fine(S_{0,2}.$ Bottom left, top right, and bottom right: realizing $v_1,\ldots,v_4$ as curves on the torus. Bottom right: $v_5$ must be contained in the unshaded regions and be disjoint from $v_1$ and $v_4$, which is not possible.}\label{fig:combo:inadmissiblegraph1}
\end{center}
\end{figure}

\begin{figure}[h]
\begin{center}
\begin{tikzpicture}
    \node[anchor = south west, inner sep = 0] at (0,0) {\includegraphics[width=1.5in]{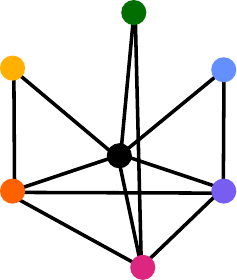}};
    %\draw[help lines] (0,0) grid (6,6);
    \node at (2.3,4.8) {$v_1$};
    \node at (2.8,0.1) {$v_4$};
    \node at (4,3.5) {$v_2$};
    \node at (4, 1.4) {$v_3$};
    \node at (-0.3,1.4) {$v_5$};
    \node at (-0.3,3.5) {$v_6$};
    \end{tikzpicture}
\caption{A graph on 7 vertices that is inadmissible as a subgraph of $\fine(S_1)$. Without the central (unlabeled) vertex, the graph is inadmissible as a subgraph of $\fine(S_{0,2}).$}\label{fig:combo:inadmissiblegraph2}
\end{center}
\end{figure}

\begin{proof}
    \pit{Inadmissibility of $G_1$} Consider the graph $G_1$ defined by a $(2n-1)$-cycle with vertices $v_1,\ldots,v_{2n-1}$, where each $v_i$ is adjacent to $v_{i+1}$ and $v_1$ is adjacent to $v_{n-1}$. We prove $G_1$ is inadmissible in $\fine(S_{0,2}).$

Suppose $G_1$ were admissible and let $\phi:G\to \fine(S_1)$ be the embedding. We will restrict the images of the vertices in the cycle. First, $\phi(v_1)$ and $\phi(v_2)$ are disjoint cores of the annulus, and without loss of generality, $\phi(v_1)$ is on the right while $\phi(v_2)$ is on the left. By Lemma~\ref{combo:lemma:niceadmissiblesubgraphs}, the images of the remainder of the vertices will be both to the left and to the right of any preexisting curve they intersect. Thus, $\phi(v_3)$ is to the right of $\phi(v_2)$ but is also both to the left and to the right of $\phi(v_1).$ As we continue, we notice that $\phi(v_4)$ must be to the left of $\phi(v_3)$ in order to intersect $\phi(v_2).$ Generalizing this, we notice that for all $i,$ $\phi(v_{2i})$ must be to the left of $\phi(v_{2i-1}),$ while $\phi(v_{2i+1})$ must be to the right of $\phi(v_{2i}).$

Therefore we have that $\phi(v_{2n-1})$ must be to the right of $\phi(v_{2n-2})$ and to the left of $\phi(v_1)$ (in order to intersect $\phi(v_2)$). However, $\phi(v_1)$ and $\phi(v_{2n-1})$ intersect, so there can be no curve between them. Thus, there is no viable image of $v_{2n-1}.$ 

\pit{Inadmissibility of $G_2$} The key to showing that this graph structure is inadmissible is the 4-clique with vertices (all of which intersect) dangling off of it. 

We first notice that, up to graph automorphism, $v_3,$ $v_4,$ and $v_5$ are equivalent (and the curves must be disjoint). Without loss of generality, $v_4$ is positioned between $v_3$ and $v_5.$ Since $v_1$ is disjoint from $v_4,$ it must be on exactly one side of $v_4.$ However, $v_1$ must intersect both $v_3$ and $v_5$ and therefore must intersect both sides of the annulus, and therefore would have to intersect $v_4,$ a contradiction. 

(We note that although we did not mention $v_2$ and $v_6$ directly, their existence was imperative since the vertices in the 3-cycle could have otherwise been reordered to make the graph admissible.)
\end{proof}

\begin{lemma}[Inadmissible graphs, the torus case]\label{prop:combo:inadmissibletorus}
    The following two graphs (or families) are inadmissible as subgraphs of $\fine(S_1).$
    \begin{enumerate}
        \item $G_1 =$ the graph with $2n$ vertices which is a $2n-1$-cycle with a central coned off vertex (such a graph is called a wheel) for $n\geq 3$. An example is pictured in the top left of Figure~\ref{fig:combo:inadmissiblegraph1}.
        \item $G_2=$ the graph with 7 vertices pictures in Figure~\ref{fig:combo:inadmissiblegraph2}.
    \end{enumerate}
\end{lemma}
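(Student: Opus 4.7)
\medskip
\noindent\textbf{Proof plan.}
The plan is to reduce both cases to the annular inadmissibility results already established in Lemma~\ref{prop:combo:inadmissibleannulus}. The key structural observation is that in each of $G_1$ and $G_2$ there is a single central coned-off vertex $v_0$ adjacent to every other vertex, and removing $v_0$ leaves precisely the annular inadmissible graph from the previous lemma: an odd cycle of length at least $5$ in the case of $G_1$ (since $G_1$ is a wheel on $2n$ vertices with $n \geq 3$), and the $6$-vertex graph of Figure~\ref{fig:combo:inadmissiblegraph2} in the case of $G_2$.

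To carry out the reduction, I would suppose for contradiction that $\phi:G\to\fine(S_1)$ is an injective homomorphism with $\phi(G)$ inducing $G$, and set $c=\phi(v_0)$. First, since every essential simple closed curve on the torus is non-separating, the complement $S_1\setminus c$ is an open annulus $A$. Next, each other $\phi(v_i)$ is adjacent to $v_0$ in $G$ and hence disjoint from $c$, so $\phi(v_i)\subset A$; being essential in $S_1$, each $\phi(v_i)$ cannot bound a disk in $A$, so it must be isotopic to the core of $A$. I would then select a closed annular subsurface $\overline{A}\subset A$ whose boundary avoids the finitely many compact curves $\phi(v_i)$, which exists because the $\phi(v_i)$ form a finite compact core-parallel family in the open annulus $A$. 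Inside $\overline{A}\cong S_{0,2}$, the curves $\{\phi(v_i)\}_{i\neq 0}$ then induce a subgraph of $\fine(\overline{A})$ isomorphic to $G\setminus\{v_0\}$, because every intersection between these curves already occurs inside $A\subset\overline{A}$.

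Finally, I would apply Lemma~\ref{prop:combo:inadmissibleannulus} directly to $\fine(\overline{A})\cong\fine(S_{0,2})$ to derive a contradiction: part (1) rules out $G_1\setminus\{v_0\}$, since it is an odd cycle of length $2n-1\geq 5$, and part (2) rules out $G_2\setminus\{v_0\}$, since it is precisely the $6$-vertex graph shown in Figure~\ref{fig:combo:inadmissiblegraph2} without its center. The main—indeed only—delicate step is the verification that the induced subgraph structure is preserved when we restrict attention to $\overline{A}$, i.e.\ that $\phi(v_i)$ and $\phi(v_j)$ have exactly the same adjacency relation in $\fine(\overline{A})$ as in $\fine(S_1)$; but this is automatic because $\phi(v_i)\cap\phi(v_j)\subset A\subset\overline{A}$. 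No substantial obstacle is anticipated beyond careful bookkeeping of these containments.
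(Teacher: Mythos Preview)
Your approach is correct and is essentially identical to the paper's: both observe that the central (unlabeled) vertex is a cone vertex, cut the torus along its image to obtain an annulus, and then invoke Lemma~\ref{prop:combo:inadmissibleannulus} on the remaining curves. The only issue is a typo in your final sentence, where you write ``$\phi(v_i)\cap\phi(v_j)\subset A\subset\overline{A}$''; the containment should read $\overline{A}\subset A$, and the point you want is simply that each $\phi(v_i)$ (and hence each pairwise intersection) lies inside $\overline{A}$.
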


\begin{proof}
    Both cases follow immediately from noticing that the unlabeled vertices in the figures are cone vertices. Thus all curves in a realization must be disjoint from the curves represented by the central vertices, so we may consider $S_1$ cut along the image of the central vertices. We then obtain an annulus, and by  Lemma~\ref{prop:combo:inadmissibletorus}, we conclude that $G_1$ and $G_2$ are both inadmissible in $\fine(S_1).$
\end{proof}

As shown in Appendix~\ref{appendix:smallgraphsintorus}, $G_1$ of Lemma~\ref{prop:combo:inadmissibletorus} on 6 vertices is actually the smallest graph to be inadmissible in $\fine(S_1).$ We can also show it is the unique inadmissible graph on 6 vertices using Lemma~\ref{combo:lemma:subgraphadmissibility} on most graphs on 6 vertices.

What we actually used in the proof of Lemma~\ref{prop:combo:inadmissibletorus} was that the realization was comprised entirely of curves homotopic to each other in the torus, and, once the torus is cut open along the cone vertex, each curve was separating and homotopic to a boundary component. We therefore have a generalization of the above. Let $\alpha$ be a curve in $\surf$. Define $\fine_\alpha(\surf)$ to be the subgraph of $\fine(\surf)$ induced by all curves isotopic to $\alpha.$

\begin{proposition}\label{combo:prop:inadmissiblesingleisotopy}
    Let $\surf$ be an orientable surface. Then $G=G_{2n},$ the graph on $2n$ vertices which is a $2n-1$-cycle with a central coned off vertex (a wheel) for $n\geq 3$ is inadmissible as a subgraph of $\mathcal{C}_\alpha^\dagger(\surf).$
\end{proposition}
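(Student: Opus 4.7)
The plan is to follow the strategy of Lemma~\ref{prop:combo:inadmissibletorus} and reduce the problem to the annular case of Lemma~\ref{prop:combo:inadmissibleannulus}(1). Suppose for contradiction that $\phi\colon G_{2n}\to\mathcal{C}_\alpha^\dagger(\surf)$ realizes $G_{2n}$ as an induced subgraph. Write $c=\phi(w)$ for the image of the cone vertex and $c_1,\ldots,c_{2n-1}$ for the cyclic images of the cycle vertices. Each is an essential simple closed curve isotopic to $\alpha$, and each $c_i$ is disjoint from $c$. Because any pair of disjoint isotopic simple closed curves cobound an embedded annulus, for each $i$ there is an annulus $B_i\subset\surf$ with $\partial B_i=c\cup c_i$.

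The heart of the argument is to exhibit an embedded annulus $\mathcal{A}\subset\surf$, with core isotopic to $\alpha$, that contains $c$ and every $c_i$. The candidate is $\mathcal{A}:=N\cup\bigcup_i B_i$, where $N$ is a thin collar of $c$. Each $B_i$ has $\pi_1(B_i)\cong\mathbb{Z}$ generated by $[c]$. After a careful choice of the $B_i$ within their isotopy classes rel $c\cup c_i$---so that any two $B_i,B_j$ on the same local side of $c$ meet in the obvious ``lower envelope'' sub-annulus, and any two on opposite local sides meet only in a prescribed collar of $c$---each pairwise intersection $B_i\cap B_j$ is connected with $\pi_1\cong\mathbb{Z}$ generated by $[c]$. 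Iteratively applying Seifert--van Kampen, every amalgamation along this common $\mathbb{Z}$ collapses to $\mathbb{Z}$, so $\pi_1(\mathcal{A})\cong\mathbb{Z}$. Since $\mathcal{A}$ is a connected orientable subsurface of $\surf$ with cyclic fundamental group, it is an embedded annulus, containing $c$ in its interior and each $c_i$.

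Given $\mathcal{A}$, the subgraph of $\fine(\mathcal{A})$ induced by $c_1,\ldots,c_{2n-1}$ is precisely the $(2n-1)$-cycle: intersection patterns inside $\mathcal{A}$ agree with those inside $\surf$, so adjacencies and non-adjacencies of the cycle are preserved. For $n\geq 3$ the integer $2n-1\geq 5$ is odd, so Lemma~\ref{prop:combo:inadmissibleannulus}(1) says this cycle is inadmissible in the fine curve graph of an annulus, contradicting the existence of $\phi$.

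The main obstacle is the Seifert--van Kampen step when $c$ is non-separating: the interiors of the annuli $B_i$ corresponding to $c_i$ on different local sides of $c$ can a priori meet far from $c$, producing disconnected pairwise intersections and possibly extra $\pi_1$ generators via HNN-type contributions. The technical work of the proof will be in choosing representatives of the $B_i$ so that opposite-side annuli overlap only in a common collar of $c$, without disturbing the intersection combinatorics among the $c_j$ themselves; one expects Corollary~\ref{cor:choosecurve} or a direct normal-form isotopy argument to suffice for this.
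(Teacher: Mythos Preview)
Your approach differs from the paper's in an important way, and the difference is where your gap lies. You try to build an embedded annulus $\mathcal{A}$ containing $c$ and all the $c_i$, so that you can invoke Lemma~\ref{prop:combo:inadmissibleannulus}(1) as a black box. The paper does not build any such annulus. Instead it cuts $\surf$ along the cone curve $c$, producing two new boundary components $\partial_L,\partial_R$; since each $c_i$ is isotopic to $c$ and disjoint from it, each $c_i$ is boundary--parallel in the cut surface and hence \emph{separates} $\partial_L$ from $\partial_R$. That separation is all one needs: for disjoint $c_i,c_j$ one says ``$c_j$ is to the left of $c_i$'' when $c_j$ lies in the component of $(\surf\setminus c)\setminus c_i$ containing $\partial_L$, and then the parity argument from the proof of Lemma~\ref{prop:combo:inadmissibleannulus}(1) runs verbatim. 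So the paper reuses the \emph{proof} of the annular lemma (which only needs a left/right relation), not its statement.

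Your construction of $\mathcal{A}$ has a genuine problem. The annulus $B_i$ with $\partial B_i=c\cup c_i$ is \emph{unique} as a subset of $\surf$ whenever $\surf$ is not a torus, so there is no ``careful choice of the $B_i$ within their isotopy classes rel $c\cup c_i$'' available to you. Moreover, the Seifert--van Kampen step needs each $B_i\cap B_j$ to be path--connected, and this can fail: if two arcs of $c_j$ inside $B_i$ are nested (both with endpoints on $c_i$), then $B_i\cap B_j$ breaks into the large region containing $\partial_L$ together with an isolated bigon trapped between the nested arcs, and these are separated inside $B_i$. Your proposed remedy via Corollary~\ref{cor:choosecurve} does not address this, since that corollary produces curves with controlled intersections, not annuli with controlled overlaps, and you are not permitted to move the $c_i$ themselves. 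The clean fix is exactly the paper's: drop the annulus and work directly with the left/right separation induced by cutting along $c$.
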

\begin{proof}
    Suppose $G$ were admissible. By cutting $\surf$ along the central (cone) vertex, which we call $\alpha,$ we obtain 2 boundary components, one of which is called the ``left'' and the other the ``right''. Since all other vertices contain one boundary component on each of their sides in $\surf\setminus \alpha,$ we have a well-defined notion of left and right. We then follow the same proof as those of Lemmas~\ref{prop:combo:inadmissibleannulus} and \ref{prop:combo:inadmissibletorus} to obtain a contradiction.
\end{proof}

Proposition~\ref{combo:prop:inadmissiblesingleisotopy} implies that inadmissible graphs in surfaces will be heavily surface-type dependent. In particular, if we can construct a graph inadmissible by one isotopy class and then combine it with multiple others in such a way that implies that isotopy classes will have to be repeated, that will amount to an inadmissible graph. This is the inspiration behind the remainder of this section.

Let $G$ and $H$ be graphs. Define the \emph{join} of $G$ and $H,$ denoted $G\#H,$ to be the graph formed by taking the disjoint union of $G$ and $H$ and adding edges between $g\in V(G)$ and $h\in V(H)$ for all $g$ and $h.$

\begin{lemma}[Inadmissible graphs, the pants case] Let $G$ be a graph inadmissible in $\fine(S_{0,2})$ whose complement is connected. Let $\Gamma = G \# G.$ Then $\Gamma$ is inadmissible in $\fine(S_{0,3}).$
\end{lemma}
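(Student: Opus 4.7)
The plan is to assume for contradiction that $\Gamma = G \# G$ embeds as an induced subgraph of $\fine(S_{0,3})$ via some $\phi$, and then to use the bipartite join structure together with the connectedness of $\overline{G}$ to force the image of one copy of $G$ into a single isotopy class of curves in $S_{0,3}$. This will contradict inadmissibility of $G$ in $\fine(S_{0,2})$ via the observation that, for any boundary component $\alpha$ of $S_{0,3}$, the induced subgraph $\fine_\alpha(S_{0,3})$ on curves isotopic to $\alpha$ sits inside $\fine(S_{0,2})$ as an induced subgraph: realizing $S_{0,3}$ as an annulus with an open disk removed, curves isotopic to $\alpha$ are essential curves in the annulus which avoid the removed disk, and disjointness is preserved under the inclusion.

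Write $G_1, G_2$ for the two copies of $G$ inside $\Gamma$. Since every vertex of $G_1$ is joined to every vertex of $G_2$, every cross-pair of curves in $\phi(G_1)\cup\phi(G_2)$ is disjoint. Every essential simple closed curve $c$ in $S_{0,3}$ is separating, cutting $S_{0,3}$ into an annulus $A_c$ cobounded by $c$ and the boundary component $\alpha$ that $c$ is isotopic to, and a pair of pants $P_c$ containing the other two boundary components. The first key step is a sidedness lemma: for each $c \in \phi(G_1)$, all of $\phi(G_2)$ lies on a single side of $c$. If both $A_c$ and $P_c$ contained images of $G_2$-vertices, then the resulting partition of $V(G_2)$ into two nonempty pieces $L, R$ would have every $\ell \in L$ disjoint from every $r \in R$ (they lie in distinct components of $S_{0,3} \setminus c$), so each such pair would be adjacent in $G$ and non-adjacent in $\overline{G}$. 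This would make $\overline{G}$ disconnected, contradicting the hypothesis; the symmetric statement swapping $G_1$ and $G_2$ holds as well.

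The argument then splits into two cases. In the first case, some $c \in \phi(G_1)$ has $\phi(G_2) \subset A_c$, or by symmetry some $d \in \phi(G_2)$ has $\phi(G_1) \subset A_d$. Then the relevant family of curves lives in an annulus and, via the induced-subgraph reduction above, realizes $G$ as an induced subgraph of $\fine(S_{0,2})$, contradicting inadmissibility. In the remaining case, for every $c \in \phi(G_1)$ and every $d \in \phi(G_2)$ we have $d \in P_c$ and $c \in P_d$. The key observation here is that two disjoint curves in $S_{0,3}$ in the same isotopy class must be parallel, and the one closer to the common boundary lies in the annular side of the other; therefore no cross-pair $c,d$ can share an isotopy class in this configuration. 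If $\phi(G_1)$ met two distinct isotopy classes $b_i, b_j$, then every $d \in \phi(G_2)$ would be forced into the third class $b_k$, placing $\phi(G_2)$ in a single isotopy class and reducing to the first case. Hence $\phi(G_1)$ itself lies in a single isotopy class, and the first-case reduction delivers the final contradiction.

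The conceptual heart of the argument is the sidedness lemma, which is precisely where connectedness of $\overline{G}$ is used. I expect the main technical obstacle to be the case-work in the second case: verifying that the ``both on pants side'' configuration rules out shared isotopy classes and then chaining this with the three-isotopy-class pigeonhole to force a monochromatic copy of $G$. The reduction $\fine_\alpha(S_{0,3}) \hookrightarrow \fine(S_{0,2})$ is essentially formal once one views the pair of pants as an annulus punctured by a disk.
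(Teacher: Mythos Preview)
Your proof is correct and reaches the same contradiction as the paper, but by a genuinely different route. The paper argues globally: connectedness of $\overline G$ makes $\bigcup\phi(V(G_1))$ a connected subset of $S_{0,3}$, and inadmissibility of $G$ in the annulus forces this union to contain curves in all three isotopy classes; together these imply that the complement $S_{0,3}\setminus\bigcup\phi(V(G_1))$ consists of three boundary-parallel annuli plus disks, so $\phi(G_2)$ is pushed strictly closer to $\partial S_{0,3}$ than $\phi(G_1)$, and the symmetric statement yields the contradiction. Your argument replaces this complement analysis with the sidedness lemma together with a pigeonhole on the three isotopy classes. Your approach has the advantage of sidestepping the verification (left implicit in the paper) that the complement really has the annuli-plus-disks form; the paper's approach, on the other hand, makes the symmetry between the two copies of $G$ the visible source of the contradiction. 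One minor remark: in Case~2 your phrase ``reducing to the first case'' is slightly imprecise, since $\phi(G_2)$ lying in a single isotopy class is not literally your Case~1; what you actually invoke there is the embedding $\fine_\alpha(S_{0,3})\hookrightarrow\fine(S_{0,2})$ set up in your opening paragraph.
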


\begin{proof}
    Suppose $\Gamma$ were admissible. Take the first copy of $G.$ Since $G$ is inadmissible in $\fine(S_{0,2}),$ vertices of $G$ must represent curves parallel to all three boundary components of $\fine(S_{0,3}).$ Moreover, since the complement of $G$ is connected, any curve disjoint from the curves represented by vertices of $G$ must be even closer to $\partial S_{0,3}.$ 

    Now consider the second copy of $G.$ Call it $G'.$ By the same argument, its vertices must be represented by curves parallel to all three boundary components. Moreover, all of the curves of $G'$ must be disjoint from the curves of the original $G,$ and therefore must be closer to the boundary components of the pair of pants than those of $G.$ This is so because $S_{0,3}\setminus V(G)$ is a collection of 3 annuli (coming from the boundary of $S_{0,3}$) and arbitrarily many disks. However, by the same argument as above, $V(G)$ must be closer to $\partial S_{0,3}$ than $V(G'),$ a contradiction.

    Therefore, $\Gamma$ is inadmissible in $\fine(S_{0,3}).$
\end{proof}

With the above in mind, we will are ready to describe a new construction of a finite graph that does not appear as a subgraph of $\fine(S_{g,b})$.

\begin{proof}[Proof of Theorem~\ref{maintheorem:forbiddensubgraphs}]
    Consider the surface $S_{g,b}.$ Then, a pants decomposition of $S_{g,b}$ has $2g+b-2$ connected components. We will construct a graph $\mathcal{G}$ that, if admissible as a subgraph of $\fine(S_{g,b}),$ would be realized by curves supported on $2g+b-1$ disjoint subsurfaces.

    Let $\Gamma$ be a graph inadmissible in $\fine(S_{0,3})$ (and therefore also in $\fine(S_{0,2})$). Let $v$ be an isolated vertex. Define \[\mathcal{G} = \#_{2g+b-1} (\Gamma \amalg \{v\}).\]

    Since $\Gamma$ is inadmissible in $\fine(S_{0,3}),$ so is $\Gamma \amalg \{v\}.$ In a realization of $\mathcal{G}$, the copies of $\Gamma\amalg \{v\}$ would be supported on at least $2g+b-1$ distinct disjoint subsurfaces. Moreover, the curves in a realization cannot be supported on $2g+b$ distinct disjoint subsurfaces since each $v$ must intersect all curves in a realization of its corresponding $\Gamma.$ 

    However, any decomposition of $S_{g,b}$ into $2g+b-1$ subsurfaces must include at least one annulus or pair of pants, thus making it impossible for $\mathcal{G}$ to be realized in $S_{g,b}.$
\end{proof}

%It is striking to compare known results for inadmissible subgraphs of curve graphs and fine curve graphs of compact surfaces. Both complete graphs and half graphs have $O(g)$ vertices while the best-known bound on inadmissible graphs in fine curve graphs also has $O(g)$ vertices. 
At first glance, one would expect that fine curve graphs have significantly more room to admit all subgraphs beneath a certain size, but the above results point to this not being the case.

\section{Automorphisms of the finitary curve graph}\label{sec:aut}

In this section, we prove Theorem~\ref{maintheorem:aut}. The goal is to reduce this to the main theorem of Booth--Minahan--Shapiro \cite{BMS}. In particular, we will show that every automorphism $\psi$ of $\finfine(S_g)$ induces an automorphism of $\onefine(S_g)$ by showing that $\psi$ preserves the set of edges corresponding to 0 or 1 points of intersection. 

\begin{proposition}\label{prop:preserved01edges}
    Suppose $u$ and $v$ are a pair of curves adjacent in $\finfine(S_g)$ with $|u\cap v|\leq 1$ and $\psi\in\Aut\finfine(S_g).$ Then $|\psi(u)\cap \psi(v)|\leq 1.$
\end{proposition}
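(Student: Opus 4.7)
My plan is to find a graph-theoretic predicate $P(u,v)$, phrased entirely in terms of the adjacency relation of $\finfine(S_g)$, such that $P(u,v)$ holds if and only if $|u\cap v|\leq 1$. Once such a $P$ is in place, the proposition is immediate: any $\psi\in\Aut\finfine(S_g)$ preserves adjacency, hence preserves $P$, and therefore preserves the intersection-$\leq 1$ relation. So all the work is in identifying $P$ and verifying the equivalence.

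The topological picture motivating the choice of $P$ is the following. When $|u\cap v|\leq 1$, a regular neighborhood $N(u\cup v)$ is either two disjoint annuli or a once-punctured torus, so its complement in $S_g$ has relatively large topological complexity and hosts a rich family of essential curves disjoint from both $u$ and $v$. When $|u\cap v|\geq 2$, the neighborhood is strictly more complicated and the complement correspondingly smaller; in particular, the maximum size of a collection of pairwise-disjoint essential curves avoiding $u\cup v$ decreases. The hope is that this discrepancy is visible inside $\finfine(S_g)$ through the combinatorics of common neighbors of $u$ and $v$.

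Concretely, I would try to formulate $P(u,v)$ as the existence of a prescribed finite configuration $\{w_1,\ldots,w_k\}$ of common neighbors of $u$ and $v$ in $\finfine$ that realizes a specific rigid adjacency pattern---for instance, a clique of common neighbors of $u$ and $v$ that are themselves pairwise adjacent and form a sub-clique of size exceeding a threshold that only fits when $u\cup v$ sits inside a small enough subsurface. The forward direction (producing a witness configuration when $|u\cap v|\leq 1$) would use Lemma~\ref{lemma:finitelymanyintersections} together with Corollary~\ref{cor:choosecurve} to realize the $w_i$ as curves in any desired isotopy class in the complement of $N(u\cup v)$, guaranteeing that they are adjacent to $u$, to $v$, and to each other in $\finfine$. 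The backward direction would proceed by contrapositive: assuming $|u\cap v|\geq 2$, a topological count of how many disjoint essential curves can coexist in the shrunken complement would show that the required rigid configuration cannot be realized.

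The hardest part will be pinning down a predicate $P$ that is simultaneously easy to produce whenever $|u\cap v|\leq 1$ and provably impossible whenever $|u\cap v|\geq 2$; the threshold has to be tight enough that it separates these cases, yet phrased in a way that only sees $\finfine$-adjacency (not actual intersection counts). For this I would draw directly on the analogous characterizations of edge-types in $\onefine$ and related graphs developed by Booth--Minahan--Shapiro \cite{BMS} and Le~Roux--Wolff \cite{LRW}, which already supply graph-theoretic witnesses tailored to small intersection numbers. With such a $P$ in hand, the remainder of the argument is routine: apply $\psi$ to both sides of the equivalence and read off $|\psi(u)\cap\psi(v)|\leq 1$.
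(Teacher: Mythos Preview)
Your overall framework---find a predicate $P(u,v)$ phrased purely in the adjacency language of $\finfine(S_g)$ that characterizes $|u\cap v|\leq 1$---is exactly right, and is what the paper does. But the specific predicate you sketch will not work. You propose to detect small intersection via the size of cliques of common neighbors, on the grounds that the complement of $N(u\cup v)$ supports more pairwise-disjoint curves when $|u\cap v|\leq 1$. The trouble is that adjacency in $\finfine$ means \emph{finite} intersection, not disjointness: a clique of common neighbors of $u$ and $v$ is just a finite set of curves each meeting $u$, $v$, and each other in finitely many points. By Lemma~\ref{lemma:finitelymanyintersections} (applied recursively) such cliques can be grown without bound for \emph{any} pair $u,v$ adjacent in $\finfine$, regardless of $|u\cap v|$. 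No threshold on clique size separates the two cases. More broadly, any predicate that tries to read off the complexity of the complement from counts of ``disjoint'' witnesses is measuring the wrong thing in this graph.

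The paper's predicate is quite different and does not look at the complement at all: it looks \emph{inside} $u\cup v$. Lemma~\ref{lemma:combinatorial} shows that $|u\cap v|\leq 1$ holds precisely when the only essential simple closed curves contained in the set $u\cup v$ are $u$ and $v$ themselves. Lemma~\ref{lemma:containment} then translates ``$w\subset u\cup v$'' into graph language: this happens exactly when $\link(u,v)\subseteq\link(w)$ (any curve meeting both $u$ and $v$ finitely often must meet $w$ finitely often, and conversely one builds a curve intersecting $w$ infinitely along an arc of $w$ outside $u\cup v$). Combining the two, $|u\cap v|\leq 1$ is equivalent to: every vertex $w$ with $\link(u,v)\subseteq\link(w)$ is equal to $u$ or $v$. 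That statement is preserved by any automorphism, which gives the proposition immediately. So the missing idea is not a counting threshold but the containment criterion $\link(u,v)\subseteq\link(w)$ together with the observation that a third essential curve appears in $u\cup v$ exactly once $|u\cap v|\geq 2$.
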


To prove this proposition, we make the following key observation: suppose a curve $a$ is contained in a finite union of curves $\bigcup u_i.$ Then, if a curve $b$ intersects $a$ infinitely many times, it intersects at least one of the $u_i$ infinitely many times. As it turns out, the converse is also true, and we formalize this observation as the following lemma. 

Define the \textit{link of a vertex} $v$, denoted $\link(v),$ to be the induced subgraph of all vertices adjacent to $v.$ Define the \textit{link of a set of vertices} $\{v_1,\ldots,v_k\},$ denoted $\link(v_1,\ldots,v_k),$ to be $\bigcap_{i=1}^k\link(v_i).$

\begin{lemma}\label{lemma:containment}
    Let $v_1,\ldots,v_k$ be a set of vertices in $\finfine(S_g).$ Then a curve $u\subset \cup v_i$ if and only if $\link(v_1,\ldots,v_k)\subseteq \link(u).$ 
\end{lemma}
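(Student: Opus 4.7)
The plan is to prove the two directions separately. The forward direction is an immediate one-line calculation; the reverse direction is the content of the lemma and uses Corollary~\ref{cor:choosecurve} to produce a witness curve.

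For the forward direction, suppose $u\subset \bigcup v_i$. If $w\in\link(v_1,\ldots,v_k)$ then $|w\cap v_i|<\infty$ for every $i$, so
\[
|w\cap u|\;\leq\;\Big|w\cap\bigcup_{i} v_i\Big|\;\leq\;\sum_{i=1}^{k}|w\cap v_i|\;<\;\infty,
\]
hence $w\in\link(u)$.

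For the reverse direction, I would prove the contrapositive: assuming $u\not\subset \bigcup v_i$, construct a vertex $w$ with $|w\cap v_i|<\infty$ for every $i$ but $|w\cap u|=\infty$. Since each $v_i$ is compact, $\bigcup v_i$ is closed in $S_g$, so there is a point $p\in u\setminus \bigcup v_i$ together with a closed disk neighborhood $D$ of $p$ disjoint from $\bigcup v_i$. After shrinking $D$, the local flatness of $u$ (from the tameness discussion in the introduction) lets us assume $u\cap D$ contains a tame arc $\alpha$ through $p$. Inside $D$, build a tame simple arc $\beta$ with endpoints $q_1,q_2\in\partial D$ that crosses $\alpha$ transversely at infinitely many points accumulating at $p$; the existence of such an arc is guaranteed by the discussion in the introduction of the complex intersection patterns realizable between tame curves. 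By construction $\beta$ lies in $D$ and so is disjoint from every $v_i$.

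The next step is to close $\beta$ up into an essential simple closed curve in $S_g$ with controlled intersection behavior. Let $S'=S_g\setminus \interior(D)$, a compact surface of finite type with boundary $\partial D$, on which every $v_i$ sits as a simple closed essential curve (since $D\cap \bigcup v_i=\emptyset$). Apply Corollary~\ref{cor:choosecurve} in $S'$ with $P=\emptyset$ and $\Gamma=\{v_1,\ldots,v_k\}$ to obtain an essential simple arc $\beta'$ in $S'$ joining $q_1$ and $q_2$ in any preselected isotopy class of essential arcs, with $|\beta'\cap v_i|<\infty$ and all such intersections crossing. Choose the isotopy class of $\beta'$ so that $w=\beta\cup \beta'$ is a simple closed essential curve in $S_g$; this is easy to arrange when $g\geq 1$ by routing $\beta'$ once around a handle, so that $\beta'$ is not isotopic rel endpoints to an arc of $\partial D$ and $w$ does not bound a disk in $S_g$. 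Then $|w\cap v_i|=|\beta'\cap v_i|<\infty$ for every $i$, while $w\cap u\supseteq \beta\cap \alpha$ is infinite, so $w\in\link(v_1,\ldots,v_k)\setminus \link(u)$, as required.

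The main obstacle is the tame arc $\beta$ with infinitely many crossings against $\alpha$; everything else is bookkeeping once Corollary~\ref{cor:choosecurve} is applied in $S'$. The essentialness check for $w$ is brief but worth spelling out, since essentialness of $\beta'$ in $S'$ does not automatically upgrade to essentialness of $w$ in $S_g$ without choosing the isotopy class of $\beta'$ appropriately.
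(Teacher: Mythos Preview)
Your argument is correct. The forward direction is identical to the paper's, and your contrapositive strategy for the reverse direction is the same in outline; the construction of the witness curve, however, is genuinely different.

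The paper avoids building an oscillating arc altogether. It takes an open subarc $O\subset u\setminus\bigcup v_i$, cuts $S_g$ along $u$, applies Corollary~\ref{cor:choosecurve} in the cut surface to produce an arc $a$ with endpoints in $O$ and $|a\cap v_i|<\infty$, and then closes $a$ up along the segment of $O$ between its endpoints. The resulting curve $\alpha$ literally \emph{shares an arc} with $u$, so $|\alpha\cap u|=\infty$ comes for free with no local construction needed; and $|\alpha\cap v_i|=|a\cap v_i|<\infty$ since $O$ misses every $v_i$. Your version instead keeps the witness transverse to $u$ and manufactures the infinite intersection by hand via the oscillating arc $\beta$.

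Each route has a small cost. The paper's cut-along-$u$ move is the quickest way to force infinite intersection, but it is slightly cavalier about the hypotheses of Corollary~\ref{cor:choosecurve}: once one cuts along $u$, a $v_i$ with $|v_i\cap u|=\infty$ fragments into infinitely many arcs, so one must read the corollary a bit liberally. Your disk-removal keeps every $v_i$ an honest simple closed curve in $S'$, so Corollary~\ref{cor:choosecurve} applies on the nose, at the price of the extra oscillating-arc step and a small endpoint-matching fix (the corollary does not pin down $q_1,q_2$ exactly, but any discrepancy can be absorbed by adjusting $\beta$ inside $D$, where no $v_i$ lives).
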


\begin{proof}
    $\Longrightarrow$ This follows from the observation above: if a curve intersects each of the $v_i$ finitely many times, it must therefore intersect $u$ finitely many times.

    % $\Longleftarrow$ We will prove this by contrapositive. Suppose $u\not\subset \bigcup v_i.$ Since $\bigcup v_i$ is compact, $u\setminus \bigcup v_i$ is open and nonempty in $u$ and therefore contains an interval of $u.$ By the work in the proof of Theorem~\ref{maintheorem.countable.subgraphs.in.finitary}, there is a curve $w$ that intersects all of the $v_i$ finitely many times and $u$ infinitely many times. Thus $w\in \link(v_1,\ldots,v_k)$ but $w\not\in\link(u).$
    $\Longleftarrow$ We will prove this by contrapositive. Suppose $u\not\subset \bigcup v_i.$ We will find a curve $\alpha \in \link(v_1,\ldots,v_k)$ such that $\alpha\not\in \link(u).$ 
    
    Since $\bigcup v_i$ is compact, $u\setminus \bigcup v_i$ is open and nonempty in $u$ and therefore contains an open interval $O$ of $u.$ Let $x,y\in O$ and consider $S_g\setminus u.$ Then, there is an arc in $S_g\setminus u$ connecting $x$ and $y.$ Applying Corollary~\ref{cor:choosecurve}, there is an arc $a$ whose endpoints are in $O$ and that intersects all $v_i$ finitely many times. Glue the surface back up along $u$ via the identity and connect the endpoints of $a$ along $O$ to form a curve $\alpha.$ As such, $|\alpha \cap u|=\infty$ but $|\alpha\cup v_i| = |a\cup v_i|<\infty,$ meaning $\alpha \in \link(v_1,\ldots,v_k)$ but $\alpha\not\in \link(u).$
\end{proof}

The second lemma we need is the following.

\begin{lemma}\label{lemma:combinatorial}
    Suppose $u$ and $v$ are a pair of adjacent vertices in $\finfine(S_g).$ Then $|u\cap v|\leq 1$ if and only if there are no essential simple closed curves in $u\cup v$ other than $u$ and $v.$
\end{lemma}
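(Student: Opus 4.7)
The plan is to prove both directions separately. For the forward direction, I would argue topologically: if $|u \cap v| = 0$, then $u \cup v$ is a disjoint union of two circles, and any embedded $S^1$ is one of the two components. If $|u \cap v| = 1$, then $u \cup v$ is a wedge of two circles at the intersection point $p$, and any subset of it homeomorphic to $S^1$ must be locally a one-manifold at $p$, hence can use at most two of the four half-edges emanating from $p$. Since a circle cannot avoid $p$ and lie entirely in the punctured wedge, it must use both half-edges of a single loop, so the subset equals $u$ or $v$.

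For the reverse direction, I would prove the contrapositive: assuming $|u \cap v| \geq 2$, I would construct an essential simple closed curve in $u \cup v$ distinct from $u$ and $v$. First, pick intersection points $p, q \in u \cap v$ that are consecutive along $u$, so the short subarc $\alpha \subset u$ from $p$ to $q$ has interior disjoint from $v$. Let $\beta_1$ and $\beta_2$ denote the two subarcs of $v$ from $p$ to $q$, and set $\gamma_i := \alpha \cup \beta_i$. Each $\gamma_i$ is an embedded simple closed curve, since $\alpha \cap \beta_i = \{p, q\}$, and neither $\gamma_i$ equals $u$ or $v$ set-theoretically, since either equality would force an infinite arc to lie in $u \cap v$, contradicting $|u \cap v| < \infty$.

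The main content is showing that at least one $\gamma_i$ is essential. For this I would orient $\alpha, \beta_1, \beta_2$ from $p$ to $q$, base the fundamental group at $p$, and write loops as concatenations with $\bar{\tau}$ denoting reversal. Then $[\gamma_1] = [\alpha \cdot \bar{\beta}_1]$, $[\gamma_2] = [\alpha \cdot \bar{\beta}_2]$, and $[v] = [\beta_1 \cdot \bar{\beta}_2]$ in $\pi_1(S_g, p)$, and a direct computation gives
\[ [\gamma_1]^{-1}[\gamma_2] = [\beta_1 \cdot \bar{\alpha} \cdot \alpha \cdot \bar{\beta}_2] = [\beta_1 \cdot \bar{\beta}_2] = [v]. \]
Since $v$ is essential, $[v] \neq 1$, so $[\gamma_1]$ and $[\gamma_2]$ cannot both be trivial, meaning at least one $\gamma_i$ is essential.

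The hard part will be bookkeeping: I need to choose $p$ and $q$ to be adjacent along $u$ so that $\alpha$ avoids the rest of $v$ in its interior, and track orientations so that $[\gamma_1]^{-1}[\gamma_2]$ collapses to exactly $[v]$. Once this algebraic identity is in hand, essentiality of at least one $\gamma_i$ follows immediately from the essentiality of $v$.
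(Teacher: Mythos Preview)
Your proof is correct and follows essentially the same route as the paper's: both handle the reverse direction by choosing consecutive intersection points along $u$, splitting $v$ into two arcs, and arguing that at least one of the two resulting curves $\alpha\cup\beta_i$ is essential because otherwise $v$ would be inessential. Your version is in fact more explicit than the paper's, spelling out the $\pi_1$ identity $[\gamma_1]^{-1}[\gamma_2]=[v]$ where the paper simply asserts the implication, and verifying $\gamma_i\neq u,v$, which the paper leaves implicit.
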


\begin{proof}
    \p{Suppose $\mathbf{|u\cap v|\leq 1}$} If $u$ and $v$ are disjoint, there are no curves other than $u$ and $v$ in their union. If they intersect at one point, any curve in their union (other than $u$ and $v$) must contain a $p\in u\setminus v.$ We must then follow $u$ until the intersection $u\cap v,$ upon which we must follow the entirety of $v$ until the intersection of $u\cap v$ again---a contradiction since the constructed curve must be simple, and any such construction must self-intersect at $u\cap v.$

    \p{Suppose $\mathbf{|u\cap v|\geq2}$} We will show that there is an essential simple closed curve other than $u$ and $v$ in $u\cup v$. Let $x_1\neq x_2$ be consecutive points of intersection between $u$ and $v$ when considered along $u.$ (This can be made precise by parameterizing $u$ as $u:I\to S$ with $u(0)=u(1)\not\in u\cap v$ and choosing $x_1=u(t_1)$ and $x_2=u(t_2)$ to be such $u(t)\not\in u\cap v$ for all $t_1<t<t_2.$) Then, there is an arc of $u$, which we call $u'$, with $x_1$ and $x_2$ as endpoints, so that $u'\cap v=\{x_1,x_2\}.$ Moreover, $v\setminus\{x_1,x_2\}$ is two arcs, $v_1$ and $v_2.$ 

    We claim that $u'\cup v_1$ or $u'\cup v_2$ must be essential. If both are inessential, it is to be the case that $v$ is inessential, a contradiction. Call the essential curve $w.$

    Then $w\subset u\cup v,$ as desired.
\end{proof}

\begin{proof}[Proof of Proposition~\ref{prop:preserved01edges}]
    The proposition now follows from Lemma~\ref{lemma:combinatorial}.
\end{proof}

\begin{proof}[Proof of Theorem~\ref{maintheorem:aut}]
    By Proposition~\ref{prop:preserved01edges}, edges corresponding to disjointness or 1 point of intersection are preserved, so any automorphism of $\finfine(S_g)$ induces an automorphism of the fine 1-curve graph $\onefine(S_g).$ Let $\Psi_1:\Aut\finfine(S_g)\to \Aut\onefine(S_g)$ be the map such that $\Psi_1(f)=\overline{f}$ is the automorphism induced by $f$. We note that $f$ and $\overline{f}$ act the same way on the vertex sets of their corresponding graphs. Let $\Psi_2:\Aut\onefine(S_g)\to \Homeo(S_g)$ be the map from Booth--Minahan--Shapiro~\cite{BMS}. 

    Let $\Phi:\Homeo(S_g)\to \Aut \finfine(S_g)$ be the natural map. We claim that $\Psi_2\circ \Psi_1=\Phi^{-1}.$

    Let $\varphi\in \Homeo(S_g)$. Then,
    \begin{align*}
        \Psi_2\circ \Psi_1\circ \Phi(\varphi) &= \Psi_2\circ \Psi_1(f_{\varphi}), \textrm{ where } f_{\varphi} \textrm{ permutes vertices as prescribed by }\varphi\\
        &= \Psi_2(\overline{f_{\varphi}})\\
        &= \Psi_2(\Psi_2^{-1}(\varphi))\\
        &= \varphi.
    \end{align*}
    Conversely, let $f\in \Aut \onefine(S_g).$ Then,
    \begin{align*}
        \Phi\circ\Psi_2\circ \Psi_1(f) &= \Phi \circ \Psi_2(\overline{f})\\
        &=\Phi(\varphi_{\overline{f}}), \textrm{ where } \varphi_{\overline{f}} \textrm{ permutes curves as prescribed by } \overline{f}\\
        &=\Phi (\varphi_f), \textrm{ since } f \textrm{ and } \overline{f} \textrm{ permute vertices in the same way}\\
        &=f.
    \end{align*}

We conclude that the natural map $\Phi: \Homeo(S_g)\to \Aut \finfine(S_g)$ is an isomorphism. 
\end{proof}

We connect this back to the Erd\H{o}s-R\'enyi graph. Although $\finfine(S_g)$ has the Erd\H{o}s-R\'enyi graph as an induced subgraph, it does not have certain qualities that the Erd\H{o}s-R\'enyi graph possesses. In particular, because of Property ($*$) from Section~\ref{sec:subgraphs}, the Erd\H{o}s-R\'enyi graph is \textit{highly symmetric}; that is, any isomorphism of induced subgraphs extends to an automorphism of the entire graph. Our Theorem~\ref{maintheorem:aut} implies that automorphisms of $\finfine(S_g)$ are extremely rigid and preserve many topological properties. Not only is $\finfine(S_g)$ not highly symmetric, but for any nontrivial subgraph of $\finfine(S)$, including single vertices, there exists an isomorphism to another induced subgraph of $\finfine(S_g)$ that cannot be extended to an automorphism of $\finfine(S_g).$ 

\appendix
\section{All graphs on $\leq 5$ vertices are admissible in $\fine(S_1)$}\label{appendix:smallgraphsintorus}

We include this section for completeness. We show via casework that every graph on 5 vertices is admissible as a subgraph of $\fine(S_1).$ We will begin casework by considering the size of the largest clique.

\p{5-clique} This is realizable as 5 parallel disjoint curves.

\p{4-clique} We perform casework on how many of the 4 curves in the clique the final curve, $\gamma,$ must intersect. We begin by drawing the 4 curves that comprise the clique.
\begin{enumerate}
    \item \textit{1 intersection.} Draw the first 4 parallel disjoint curves in any order. Draw $\gamma$ close to the one curve it must intersect. (An alternative proof method is to note that if it is known that all graphs on 4 vertices are admissible, then this graph is also admissible by (4) of Lemma~\ref{combo:lemma:subgraphadmissibility}.)
    \item \textit{2 intersections.} When drawing the 4 parallel disjoint curves, draw the two curves consecutively. Then, draw $\gamma$ between the two curves it must intersect and isotope it to intersect both. (An alternative proof method is to note that if it is known that all graphs on 4 vertices are admissible, then this graph is also admissible by (4) of Lemma~\ref{combo:lemma:subgraphadmissibility}.)
    \item \textit{3 intersections.} This can be accomplished by (2) of Lemma~\ref{combo:lemma:subgraphadmissibility}.
    \item \textit{4 intersections.} This can be accomplished by (1) of Lemma~\ref{combo:lemma:subgraphadmissibility}.
\end{enumerate}

\p{3-clique} We will do this by casework, looking at the length of the largest cycle.
\begin{enumerate}[leftmargin=3\parindent]
    \item[3-cycle.] Fix a 3-cycle in $G$ and let $v_4$ and $v_5$ be the final 2 vertices, which we will attach to the 3-cycle in sequence. In this case, $v_4$ can be attached only via 0 edges or 1 edge (otherwise there sill be a 4-cycle). 

    \textit{0 attachments.} In this case, we can apply (1'), (2'), or both of them of Lemma~\ref{combo:lemma:subgraphadmissibility}, depending on whether $v_5$ has degree 0, 2, or 1, respectively.

    \textit{1 attachment.} We can apply (2) or (2') of Lemma~\ref{combo:lemma:subgraphadmissibility} if the degree of $v_5$ is 1 or (1) if the degree of $v_5$ is 0. The new case is degree 2, in which case we can apply (2) to the 3-clique and then (4), as $v_4,v_5$ would be a blowup of $v_4$ into a clique.
    
    \item[4-cycle.] We do casework on the degree of the final vertex.
    \begin{enumerate}
        \item[Degree 0] This is possible by (1) of Lemma~\ref{combo:lemma:subgraphadmissibility}.
        \item[Degree 1] This is possible by (2) of Lemma~\ref{combo:lemma:subgraphadmissibility}.
        \item[Degree 2] (and higher degrees) This is impossible since then there would be a 5-cycle (since there must also be a 3-clique).
    \end{enumerate}
    
    \item[5-cycle.] In this case, we begin with a 5-cycle with 1 additional edge to account for the existence of a triangle. We do casework on the number of additional edges.
    \begin{enumerate}
        \item[0.] This is realizable as in Figure~\ref{fig:combo:5cycle1triangle}.

        \begin{figure}[h]
        \begin{center}
        \begin{tikzpicture}
            \node[anchor = south west, inner sep = 0] at (0,0) {\includegraphics[width=4in]{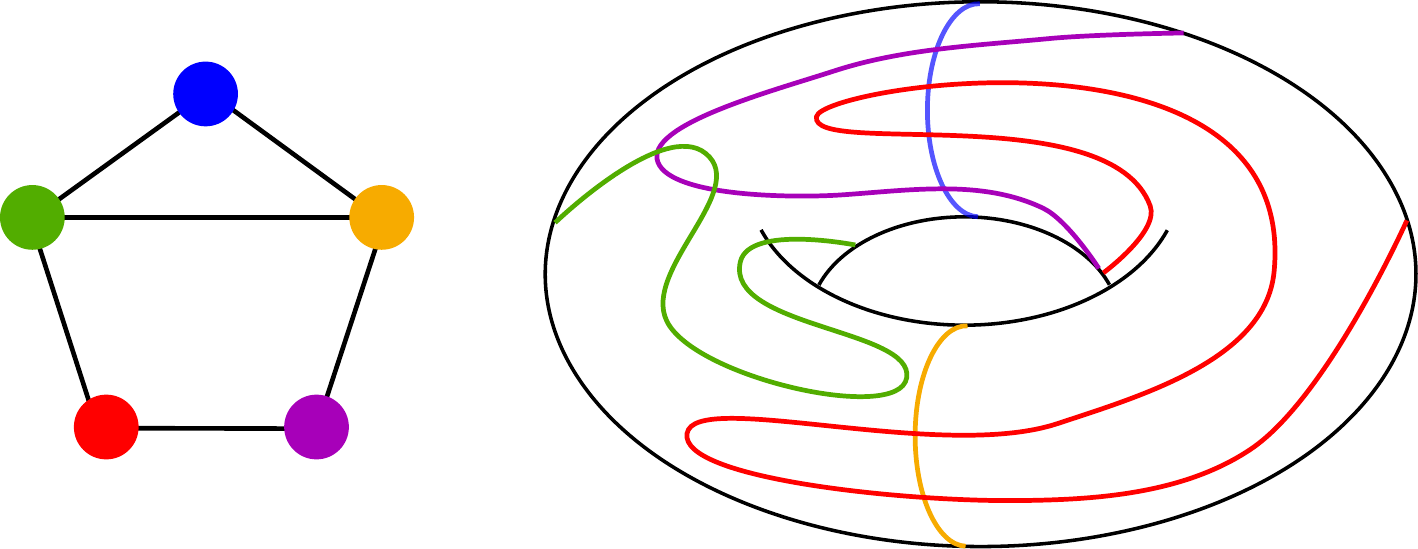}};
            %\draw[help lines] (0,0) grid (10,4);
            \node at (1.5,3.7) {$v_1$};
            \node at (3.25,2.5) {$v_2$};
            \node at (2.65,0.6) {$v_3$};
            \node at (0.3,0.6) {$v_4$};
            \node at (-0.3,2.5) {$v_5$};
        
            \node at (6.9,4.25) {$v_1$};
            \node at (7,1.3) {$v_2$};
            \node at (8.5,3.9) {$v_3$};
            \node at (9.5,2) {$v_4$};
            \node at (4.5,2) {$v_5$};
            \end{tikzpicture}
        \caption{A 5-cycle with one additional edge as an admissible subgraph of $\fine(S_1)$.}\label{fig:combo:5cycle1triangle}
        \end{center}
        \end{figure}

        \item[1.] There are 2 options; both are realizable as in Figure~\ref{fig:combo:5cycle2edges}, one on top and one on the bottom.

        \begin{figure}[h]
        \begin{center}
        \begin{tikzpicture}
            \node[anchor = south west, inner sep = 0] at (0,0) {\includegraphics[width=4in]{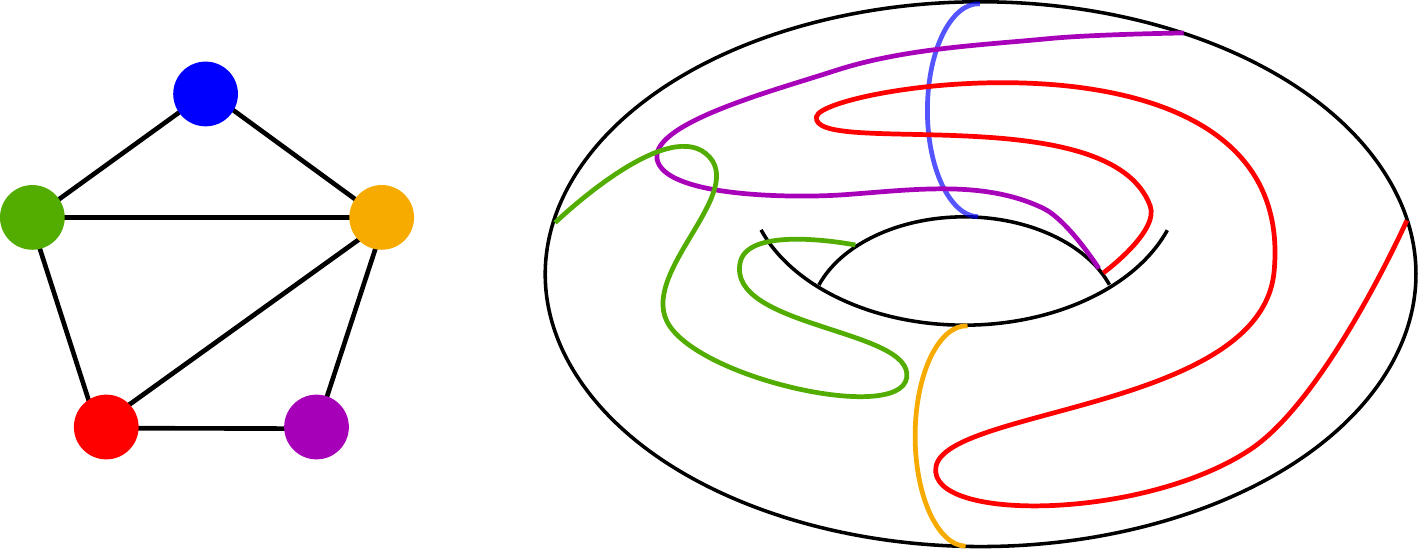}};
            %\draw[help lines] (0,0) grid (10,4);
            \node at (1.5,3.7) {$v_1$};
            \node at (3.25,2.5) {$v_2$};
            \node at (2.65,0.6) {$v_3$};
            \node at (0.3,0.6) {$v_4$};
            \node at (-0.3,2.5) {$v_5$};
        
            \node at (6.9,4.25) {$v_1$};
            \node at (7,1.3) {$v_2$};
            \node at (8.5,3.9) {$v_3$};
            \node at (9.5,2) {$v_4$};
            \node at (4.5,2) {$v_5$};
            \end{tikzpicture}
            \begin{tikzpicture}
            \node[anchor = south west, inner sep = 0] at (0,0) {\includegraphics[width=4in]{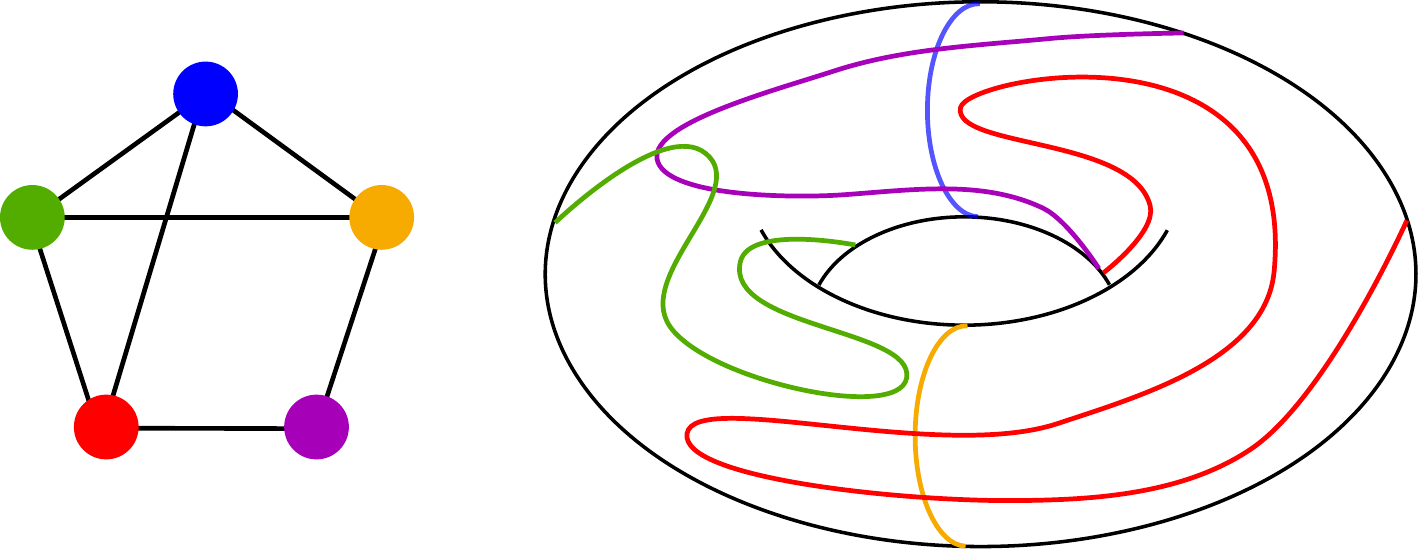}};
            %\draw[help lines] (0,0) grid (10,4);
            \node at (1.5,3.7) {$v_1$};
            \node at (3.18,2.5) {$v_2$};
            \node at (2.65,0.6) {$v_3$};
            \node at (0.3,0.6) {$v_4$};
            \node at (-0.3,2.5) {$v_5$};

            \node at (6.9,4.25) {$v_1$};
            \node at (7,1.3) {$v_2$};
            \node at (8.5,3.9) {$v_3$};
            \node at (9.5,2) {$v_4$};
            \node at (4.5,2) {$v_5$};

            \end{tikzpicture}
        \caption{Two 5-cycles with 2 additional edge as admissible subgraphs of $\fine(S_1)$.}\label{fig:combo:5cycle2edges}
        \end{center}
        \end{figure}
        
        \item[2.] There is 1 option; it is realizable as in Figure~\ref{fig:combo:5cycle3edges}.
        \begin{figure}[h]
        \begin{center}
        \begin{tikzpicture}
            \node[anchor = south west, inner sep = 0] at (0,0) {\includegraphics[width=4in]{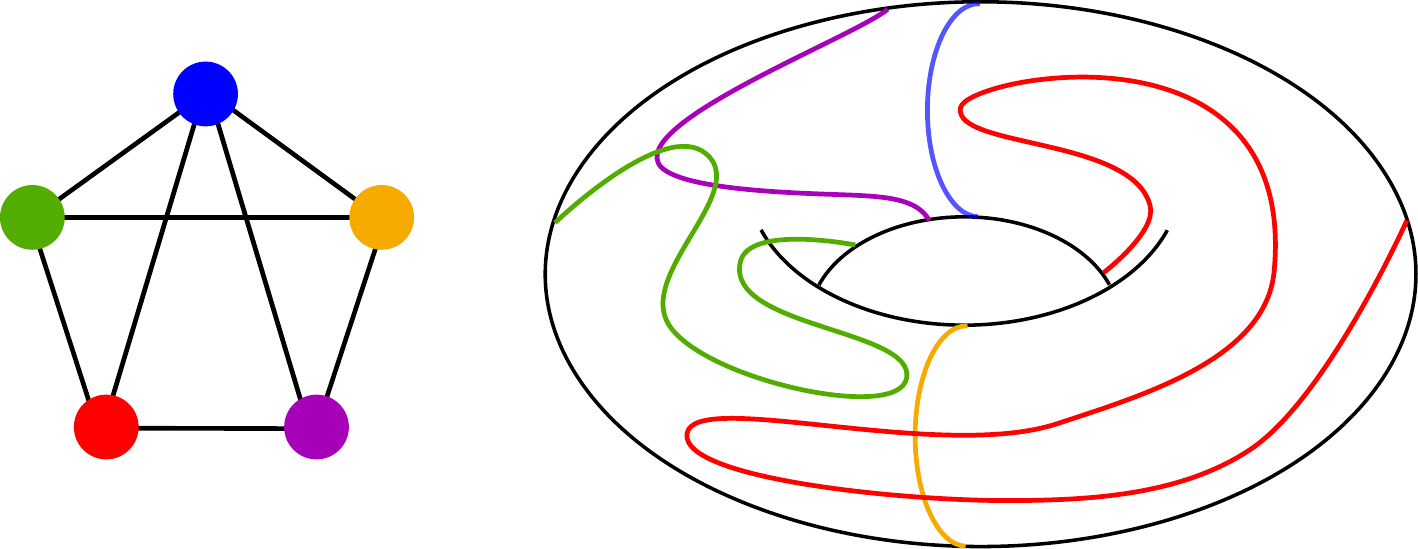}};
            %\draw[help lines] (0,0) grid (10,4);
            \node at (1.5,3.7) {$v_1$};
            \node at (3.25,2.5) {$v_2$};
            \node at (2.65,0.6) {$v_3$};
            \node at (0.3,0.6) {$v_4$};
            \node at (-0.3,2.5) {$v_5$};
        
            \node at (6.9,4.25) {$v_1$};
            \node at (7,1.3) {$v_2$};
            \node at (8.5,3.9) {$v_3$};
            \node at (9.5,2) {$v_4$};
            \node at (4.5,2) {$v_5$};
            \end{tikzpicture}
        \caption{A 5-cycle with three additional edges as an admissible subgraph of $\fine(S_1)$.}\label{fig:combo:5cycle3edges}
        \end{center}
        \end{figure}
        \item[3.] We cannot add 3 or more edges since then we will have a 3-clique.
    \end{enumerate}
 \end{enumerate}

\p{2-clique} We look at the length of the largest cycle.
\begin{enumerate}[leftmargin=4\parindent]
    \item[No cycles.] This is a tree, which is admissible by (2') of Lemma~\ref{combo:lemma:subgraphadmissibility}.
    \item[2- or 3-cycle.] This is not possible on account of not existing and a 3-cycle being a 3-clique, respectively.
    \item[4-cycle.] We now do casework on the degree of the final vertex, $\gamma$.
    \begin{enumerate}
        \item[Degree 0] This is by (1) of Lemma~\ref{combo:lemma:subgraphadmissibility}.
        \item[Degree 1] This is by (2) of Lemma~\ref{combo:lemma:subgraphadmissibility}.
        \item[Degree 2] Now there are 2 cases: either the neighbors of $\gamma$ are adjacent or they are not. If they are adjacent, then there is a triangle, a contradiction. Otherwise, the graph is admissible as in Figure~\ref{fig:combo:4cycle}.

        \begin{figure}[h]
        \begin{center}
        \begin{tikzpicture}
            \node[anchor = south west, inner sep = 0] at (0,0) {\includegraphics[width=4in]{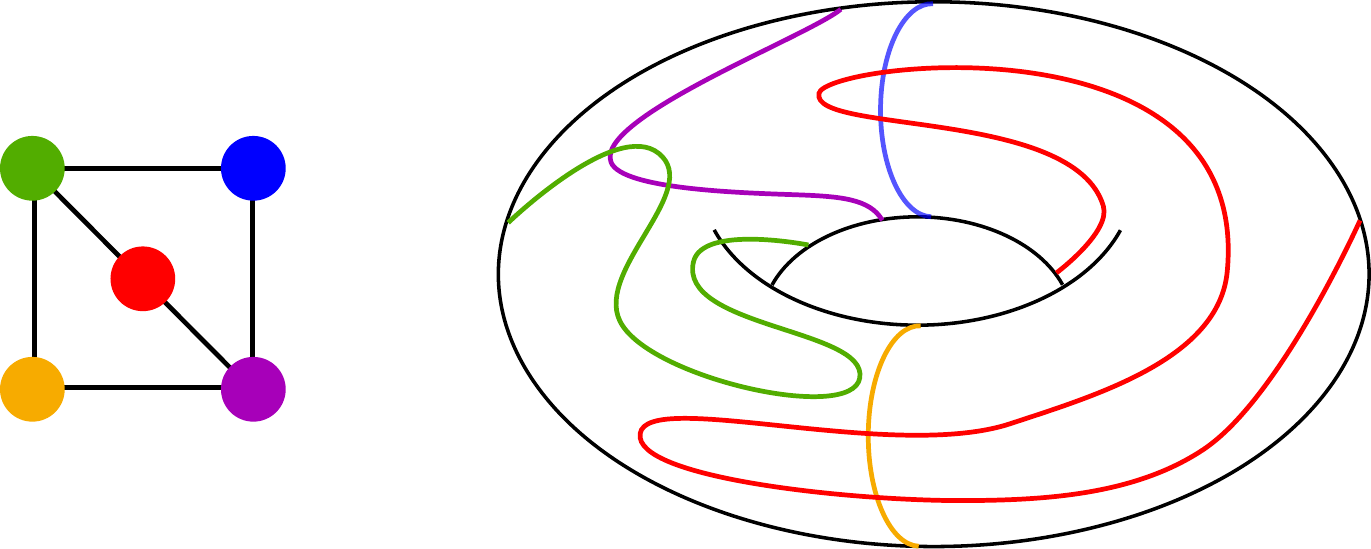}};
            %\draw[help lines] (0,0) grid (10,4);
            \node at (2.15,3.25) {$v_1$};
            \node at (0,0.75) {$v_2$};
            \node at (2.15,0.75) {$v_3$};
            \node at (1.35,2.35) {$v_4$};
            \node at (0,3.25) {$v_5$};
        
            \node at (6.9,4.25) {$v_1$};
            \node at (6.85,1.4) {$v_2$};
            \node at (6,4.25) {$v_3$};
            \node at (9.5,2) {$v_4$};
            \node at (4.34,2) {$v_5$};
            \end{tikzpicture}
        \caption{A 4-cycle with two additional edge as an admissible subgraph of $\fine(S_1)$.}\label{fig:combo:4cycle}
        \end{center}
        \end{figure}
    \end{enumerate}
    \item[5-cycle.] This is possible as in Figure~\ref{fig:combo:5cycle}.
    \begin{figure}[h]
        \begin{center}
        \begin{tikzpicture}
            \node[anchor = south west, inner sep = 0] at (0,0) {\includegraphics[width=4in]{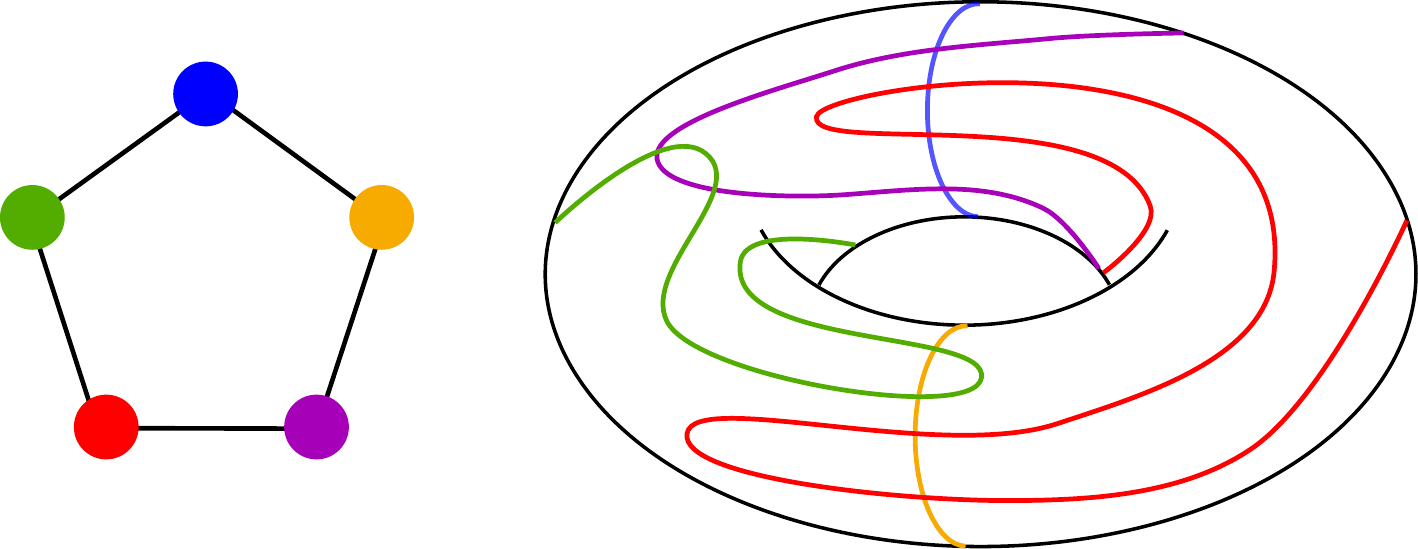}};
            %\draw[help lines] (0,0) grid (10,4);
            \node at (1.5,3.7) {$v_1$};
            \node at (3.25,2.5) {$v_2$};
            \node at (2.65,0.6) {$v_3$};
            \node at (0.3,0.6) {$v_4$};
            \node at (-0.3,2.5) {$v_5$};
        
            \node at (6.9,4.25) {$v_1$};
            \node at (6.9,1.8) {$v_2$};
            \node at (8.5,3.9) {$v_3$};
            \node at (9.5,2) {$v_4$};
            \node at (4.5,2) {$v_5$};
            \end{tikzpicture}
        \caption{A 5-cycle as an admissible subgraph of $\fine(S_1)$.}\label{fig:combo:5cycle}
        \end{center}
        \end{figure}
\end{enumerate}

\p{1-clique} This can be accomplished by choosing 5 curves that all mutually intersect essentially. (This can also be done by (1') of Lemma~\ref{combo:lemma:subgraphadmissibility}.)

\printbibliography

\end{document}